%% file: main.tex
\documentclass{article} 
\usepackage{times}

\input{math_commands.tex}

\usepackage{amsmath} 
\usepackage{amsthm} 
\usepackage{enumitem}
\usepackage{bbm}

\usepackage{todonotes}

\usepackage[utf8]{inputenc} 
\usepackage[T1]{fontenc}    
\usepackage{float}
\usepackage{hyperref}       

\usepackage{url}            
\usepackage{booktabs}       
\usepackage{cellspace}
\usepackage{amsfonts}       
\usepackage{nicefrac}       
\usepackage{microtype}      
\usepackage{xcolor}         
\usepackage{breqn}

\usepackage{comment}

\usepackage{natbib}
\usepackage{amssymb}
\usepackage{algorithm}
\usepackage{algpseudocode}
\usepackage{graphicx}
\usepackage{xfrac}
\usepackage{multirow}
\usepackage{pifont}

\usepackage[capitalize]{cleveref} 
\usepackage{aliascnt}

\newtheorem{theorem}{Theorem}[section]
\crefname{theorem}{theorem}{theorems}
\Crefname{theorem}{Theorem}{Theorems}

\newaliascnt{lemma}{theorem}
\newtheorem{lemma}[lemma]{Lemma}
\aliascntresetthe{lemma}
\crefname{lemma}{lemma}{lemmas}
\Crefname{lemma}{Lemma}{Lemmas}

\newaliascnt{corollary}{theorem}
\newtheorem{corollary}[corollary]{Corollary}
\aliascntresetthe{corollary}
\crefname{corollary}{corollary}{corollaries}
\Crefname{corollary}{Corollary}{Corollaries}

\newaliascnt{fact}{theorem}

\aliascntresetthe{fact}
\crefname{fact}{fact}{facts}
\Crefname{fact}{Fact}{Facts}

\newtheorem{definition}{Definition}[section]
\crefname{definition}{definition}{definitions}
\Crefname{definition}{Definition}{Definitions}

\newtheorem{assumption}{Assumption}[section]
\crefname{assumption}{assumption}{assumptions}
\Crefname{assumption}{Assumption}{Assumptions}

\crefname{example}{example}{examples}
\Crefname{example}{Example}{Examples}

\crefname{table}{table}{tables}
\Crefname{table}{Table}{Tables}

\crefname{figure}{figure}{figures}
\Crefname{figure}{Figure}{Figures}

\makeatletter
\newcommand{\newreptheorem}[2]{%
\newenvironment{rep#1}[1]{%
 \def\rep@title{#2~\ref{##1}}%
 \begin{rep@theorem}}%
 {\end{rep@theorem}}}
\makeatother

\newreptheorem{theorem}{Theorem}
\newreptheorem{lemma}{Lemma}

\crefname{reptheorem}{theorem}{theorems}
\Crefname{reptheorem}{Theorem}{Theorems}

\crefname{replemma}{lemma}{lemmas}
\Crefname{replemma}{Lemma}{Lemmas}

\let\top\intercal

\title{Acceleration for Affine-coupling problems}
\author{Clement Lezane, Sophie Langer, Wouter M Koolen}

\begin{document}

\maketitle

\begin{abstract}
Saddle-point problems play an important role in modern machine learning, including robust optimization and algorithmic fairness.
We investigate structured convex--concave minimax problems where the primal variable lies in a high-dimensional, unconstrained space $\mathbb{R}^d$, while the dual variable is confined to a constrained, low-dimensional space $\mathbb{R}^J$, with $J\ll d$.
Although existing theoretical frameworks establish that an accelerated rate of $\mathcal{O}(L/T^2)$ is possible, solving the associated subproblems efficiently can become a computational bottleneck in high dimensions.
To address this, we introduce a decoupling technique. By restricting second-order updates to the low-dimensional dual space and employing first-order methods in the high-dimensional primal space, our framework combines Newton methods with Nesterov acceleration.
For simplex-constrained dual variables and regularizers compatible with a self-concordant barrier, the resulting algorithm achieves a convergence rate of $\mathcal{O}(L/T^2)$ in primal objective suboptimality after $T$ outer iterations, without strong dual concavity.
The additional linear-algebra cost per outer iteration is $\widetilde{\mathcal{O}}(dJ^2+J^{3.5})$, alongside one evaluation of the component losses and their Jacobian.

\end{abstract}

\section{Introduction}

We study minimax optimization problems of the form
\begin{equation}
\label{eq:saddle_point}
\begin{aligned}
\min_{w\in\mathbb{R}^d}\max_{\lambda\in\Lambda}\Phi(\lambda,w),
    \qquad
    \Phi(\lambda,w)
    &:=\langle\lambda,f(w)\rangle-R(\lambda).
\end{aligned}
\end{equation}
Here $f:\mathbb{R}^d\to\mathbb{R}^J$ is smooth,
$\Lambda\subset\mathbb{R}^J$ is compact and convex, and
$R:\Lambda\to\mathbb{R}$ is convex. We call this structure \emph{affine
coupling}: the coupling term is linear in the typically low-dimensional
variable $\lambda$, whereas $f$ may depend nonlinearly on the
high-dimensional variable $w$. Under \Cref{Assumption-linear-convex}, $\Phi$ is
convex in $w$ and concave in $\lambda$.

Problems of this form arise naturally when several component losses must be
aggregated. Let $f_j(w)$ denote the loss associated with component $j$.
Choosing $\Lambda=\Delta_J$ and
$R(\lambda)=\frac{\gamma}{2}\|\lambda-\frac{1}{J}\mathbf{1}\|_2^2$ gives
\begin{align*}
    \min_{w\in\mathbb{R}^d}\max_{\lambda\in\Delta_J}
    \left\{
        \sum_{j=1}^J\lambda_j f_j(w)
        -\frac{\gamma}{2}
        \left\|\lambda-\frac{1}{J}\mathbf{1}\right\|_2^2
    \right\}.
\end{align*}
For $\gamma=0$, the inner maximum is $\max_j f_j(w)$; increasing $\gamma$
penalizes concentrated weights and moves the objective toward the uniform
average $J^{-1}\sum_j f_j(w)$. When $j$ indexes demographic groups or
classes, this yields objectives used in fair and class-imbalanced learning,
for example with $f_j(w)=\mathbb{E}[\ell(X,Y;w)\mid Y=j]$
\citep{hashimoto2018fairness,martinez2020minimax}. When $j$ indexes
environments or perturbation models, it yields a robust learning objective
\citep{sagawa2020distributionally,madry2018towards}.
Divergence-based and entropic regularizers yield related objectives in
distributionally robust and entropy-regularized learning
\citep{namkoong,hu2020kldro,ziebart2008maximum,haarnoja2018soft,
christodoulou2019soft,geist2019theory}.

From an optimization perspective, affine coupling lies between two familiar
regimes. Generic first-order methods for convex--concave saddle-point problems
have a worst-case convergence rate of order $1/T$
\citep{nesterov2018lectures,BT12}. When the dual variable disappears, for
example, when $\Lambda=\Delta_1=\{1\}$, the problem reduces to smooth convex
minimization, for which acceleration yields the optimal rate $1/T^2$
\citep{Nesterov83}. This raises our central question: can affine coupling recover the accelerated rate without strong concavity, while remaining computationally efficient when \(J\ll d\)?

To answer this question, we design an algorithm that achieves the optimal accelerated convergence rate of $\mathcal{O}(1/T^2)$ while maintaining a per-iteration complexity that scales gracefully to high-dimensional settings. Specifically, our contributions are threefold:

\textbf{1. Decoupled acceleration framework.} Assuming oracle access to $f$ and its Jacobian, we propose an algorithm achieving an $O(1/T^2)$ convergence rate for affine-coupling saddle-point problems \eqref{eq:saddle_point}. By leveraging a fixed-point structure to decouple the high-dimensional primal acceleration $w$ from the low-dimensional dual constraints $\lambda$, we restrict exact second-order updates exclusively to the dual space. This bypasses the traditional bottleneck in high dimensions, reducing the per-iteration linear-algebra complexity from $\mathcal{O}((d+J)^3)$ for a joint dense solve to strictly   $\mathcal{O}(dJ^2+J^3)$.

\textbf{2. Connecting interior-point methods and acceleration.} We relate the local Newton decrement used in classical interior-point methods \citep{nesterov2018lectures,boyd2004convex} to the global approximation error required by the accelerated outer method \citep{Nesterov83}. This theoretical bridge formally establishes that the nested inner problems can be solved only approximately while perfectly retaining the $O(1/T^2)$ convergence rate, translating theoretical composite acceleration into a computationally tractable algorithm.

\textbf{3. Extending interior-point guarantees for general geometries.} To rigorously bound the approximation error required by our first-order method, we extend the theoretical analysis of the classical path-following algorithm introduced by \citet{nesterov1994interior}. Specifically, we derive a novel analytical bound on the global cross-evaluation gap that holds for any compact convex constraint set $\Lambda$ equipped with a self-concordant barrier. 

\subsection{Literature review}

For general smooth convex-concave problems, optimistic and extragradient
methods \citep{Rakhlin13,mokhtari20a} and smoothing-based methods
\citep{BT12} achieve $\mathcal{O}(1/T)$ rates. These match worst-case
first-order lower bounds when the dimensions are sufficiently large relative
to the number of oracle calls \citep{Ouyang2018LowerCB}. Faster rates are
available under strong convexity or concavity \citep{Thekumprampil}, and
for the two-objective case $R=0$, $J=2$ \citep{thesis}.

Breaking the $\Omega(1/T)$ lower bound in bilinear games requires the dual update to utilize non-first-order information \citep{Ouyang2018LowerCB}. Composite optimization provide a potential pathway to acceleration for our problem
class. Indeed, \eqref{eq:saddle_point} can be written as
\[\min_w h(f(w)), \qquad
    h(z)=\max_{\lambda\in\Lambda}\{\langle\lambda,z\rangle-R(\lambda)\}.
\]
Following this perspective, \citet{drusvyatskiy2017} propose an accelerated prox-linear method that theoretically achieves an $\mathcal{O}(1/T^2)$ outer convergence rate under standard convexity conditions. However, their algorithm requires repeatedly solving $d$-dimensional regularized primal subproblems to high precision. When $f$ is nonlinear, finding these high-precision solutions is highly non-trivial. Employing general higher-order methods to solve these subproblems incurs prohibitive per-iteration costs, rendering the approach computationally intractable for large dimensions $d$. \Cref{table-literature} summarizes the theoretical convergence rates of existing algorithms relative to our approach, while \Cref{sec-complexity} provides a detailed accounting of both oracle and computational complexities.
\begin{table}[ht]
\centering
\label{tab:convergence}
\renewcommand{\arraystretch}{1.5}
\begin{tabular}{lll}
\toprule
\textbf{Setting} & \textbf{Existing method} & \textbf{Our method} \\
\midrule
\multirow{3}{*}{Affine-coupling}         & $\mathcal{O}(\frac{L}{T})$ \citet{Rakhlin13}    & \multirow{5}{*}{$\mathcal{O}\left( \frac{L}{T^2} \right)$} \\
          & $\mathcal{O}(\frac{L}{T})$ \citet{mokhtari20a} & \\
          & $\mathcal{O}(\frac{L}{T^2})^\dagger$ \citet{drusvyatskiy2017} & \\
$R$ $\gamma$-Strongly-Convex & $\Omega(\frac{L }{\gamma T^2})$ \citet{Ouyang2018LowerCB}  & \\
$R=0, J=2$&$\mathcal{O} (\frac{L\log(T)}{T^2})$ \citet{thesis}  & \\
\bottomrule
\end{tabular}
\caption{Convergence rate comparison of various algorithms given $T$ queries to $\nabla f$. $^\dagger$ whenever its subproblem oracle admits an efficient implementation.}
\label{table-literature}
\end{table}

\subsection{Preliminaries}
In this section, we introduce the preliminaries needed for our analysis. 
\begin{definition}[Bregman divergence]\label{def:bregman}
For a continuously differentiable function $F :\mathbb{R}^d  \xrightarrow{} \mathbb{R} $, the \emph{Bregman divergence} from $x \in \mathbb{R}^d$ to $y \in \mathbb{R}^d$ is defined by
\[ D_F(x,y) := F(x) - F(y) - \langle \nabla F(y),x-y \rangle. \]
\end{definition}
\begin{definition}[L-Smoothness and $\gamma$-Strongly-convexity]
\label{def:smooth}
A continuously differentiable function  $F :\mathbb{R}^d  \xrightarrow{} \mathbb{R} $ is said to be \emph{$L$-smooth}, if for all $x,y \in \mathbb{R}^d$
\[ D_F(x,y) \leq \frac{L}{2} \|x-y\|_2^{2}.\]
$F$ is said to be \emph{$\gamma$-strongly-convex}, if for all $x,y \in \mathbb{R}^d$
\[ D_F(x,y) \geq \frac{\gamma}{2} \|x-y\|_2^{2}.\]
\end{definition}
\begin{definition}[$1$-Self-concordance]\label{def:self_concordant}
A three times continuously differentiable convex function $F : \mathcal{D} \xrightarrow{} \mathbb{R}$, where $\mathcal{D} \subseteq \mathbb{R}^d$ is an open convex set, is said to be \emph{$1$-self-concordant} if for all $x \in \mathcal{D}$ and all directions $h \in \mathbb{R}^d$,
\[ 
    |\nabla^3 F(x)[h,h,h]| \leq 2\left( h^\top \nabla^2 F(x) h \right)^{3/2}, 
\]
where $\nabla^3 F(x)[h,h,h] := \left. \frac{d^3}{dt^3} F(x + th) \right|_{t=0}$ denotes the third-order directional derivative of $F$ at $x$ along the direction $h$.
\end{definition}

\begin{definition}[$\nu$-Self-concordant barrier]\label{def:nu_self_concordant_barrier}
A function $F : \mathcal{D} \to \mathbb{R}$, where $\mathcal{D} \subseteq \mathbb{R}^d$ is an open convex set, is said to be a \emph{$\nu$-self-concordant barrier} for its closure $\text{cl}(\mathcal{D})$ if it is a $1$-self-concordant function, $F(x) \to \infty$ as $x \to \partial\mathcal{D}$, and there exists a parameter $\nu \geq 0$ such that for all $x \in \mathcal{D}$ and all directions $h \in \mathbb{R}^d$,
\[ 
    |h^\top \nabla F(x)| \leq \sqrt{\nu} \left( h^\top \nabla^2 F(x) h \right)^{1/2}.
\]
Equivalently, when the Hessian $\nabla^2 F(x)$ is strictly positive definite, this condition can be written in terms of the Newton decrement as $\nabla F(x)^\top [\nabla^2 F(x)]^{-1} \nabla F(x) \leq \nu$.
\end{definition}

Throughout this work, the affine-coupling problem is analyzed under the following assumptions.
\begin{assumption}[Affine-Coupling]
\label{Assumption-linear-convex}
Consider the setting of \eqref{eq:saddle_point}, and let $L,G\geq0$.
\begin{itemize}
\item The set $\Lambda\subset\mathbb{R}^J$ is non-empty, compact, and convex.
The primal and dual dimensions satisfy $d,J\in\mathbb{N}_+$, and our main
computational regime of interest is $d\gg J$.
\item There exists a $\nu$-self-concordant barrier $B_\Lambda: \operatorname{relint}(\Lambda) \to \mathbb{R}$.
\item The function $R:\Lambda\to\mathbb{R}$ is convex and $R$ compatible with $B_{\Lambda}$ (for every $c\geq0$, $cR+B_{\Lambda}$ is convex and $1$-self-concordant.)
\item For every $\lambda\in\Lambda$, the function
$w\mapsto f(w)\lambda$ is convex and $L$-smooth, and its gradient is
bounded by $G$. 
\end{itemize}
\end{assumption}

\section{Acceleration with a Fixed-Point Oracle}
In this section we propose an algorithm and prove an accelerated convergence result for it. The construction uses an efficient first-order iterative scheme for $w$ and a (non-constructive) fixed-point oracle for $\lambda$.  This section serves as a warm up with clean analysis. Our main result, in the next section, tackles the challenge of efficiently implementing an approximate fixed point algorithm.

The following fixed-point property serves as the primary  motivation for our design:

\begin{theorem}
\label{thm-fixed-point}
Consider any function $\Phi:  \Lambda \times \mathbb{R}^d \xrightarrow{} \mathbb{R}$ that is concave and continuous in the first argument and $C^1$ in the second argument. Let $\Lambda \subset \mathbb{R}^J$ be a non-empty, convex and compact subset. Then for any continuous update rule
\begin{align*}
g_t : 
\begin{cases}
\Lambda \xrightarrow{} \mathbb{R}^d \\
\lambda\xrightarrow{} g_t(\lambda),\\
\end{cases}
\end{align*} there exists $\lambda_\star \in \Lambda$ such that
\[ \Phi(\lambda_\star,g_t(\lambda_\star)) = \max_{\lambda \in \Lambda}  \Phi (\lambda,g_t (\lambda_\star)). \]
\end{theorem}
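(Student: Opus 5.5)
We will obtain the fixed point from Kakutani's fixed-point theorem applied to the best-response correspondence of the dual player, evaluated at the primal point produced by the update rule. Define the set-valued map $\mathcal{B}:\Lambda\rightrightarrows\Lambda$ by
\[
\mathcal{B}(\mu) := \operatorname*{arg\,max}_{\lambda\in\Lambda}\Phi\bigl(\lambda,g_t(\mu)\bigr).
\]
A point $\lambda_\star\in\mathcal{B}(\lambda_\star)$ is exactly the conclusion of \Cref{thm-fixed-point}, since then $\Phi(\lambda_\star,g_t(\lambda_\star))=\max_{\lambda\in\Lambda}\Phi(\lambda,g_t(\lambda_\star))$. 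So it suffices to verify the three hypotheses of Kakutani's theorem: $\Lambda$ is non-empty, compact and convex, and $\mathcal{B}$ has non-empty convex values and a closed graph.

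\textbf{Non-empty, convex values.} For fixed $\mu$, the map $\lambda\mapsto\Phi(\lambda,g_t(\mu))$ is continuous on the compact set $\Lambda$, so by Weierstrass its maximum is attained and $\mathcal{B}(\mu)\neq\emptyset$. Since this map is concave, its set of maximizers is a superlevel set $\{\lambda\in\Lambda:\Phi(\lambda,g_t(\mu))\ge M(\mu)\}$ of a concave function, with $M(\mu)$ the maximum value. Such a set is convex, and it is closed by continuity.

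\textbf{Closed graph.} Take $\mu_n\to\mu$ and $\lambda_n\in\mathcal{B}(\mu_n)$ with $\lambda_n\to\bar\lambda$; we must show $\bar\lambda\in\mathcal{B}(\mu)$. For every $\lambda\in\Lambda$ we have $\Phi(\lambda_n,g_t(\mu_n))\ge\Phi(\lambda,g_t(\mu_n))$, and we want to pass to the limit. This is the main obstacle: the hypotheses give continuity of $\Phi$ in $\lambda$ and $C^1$ regularity in $w$ separately, not joint continuity in $(\lambda,w)$.

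\begin{itemize}[label={}, leftmargin=0pt]
\item \emph{Obtaining joint continuity.} I would argue this in two steps.
\begin{itemize}[leftmargin=1.5em]
\item In the setting of interest, $\Phi(\lambda,w)=\langle\lambda,f(w)\rangle-R(\lambda)$ with $f$ continuous and $R$ continuous on $\Lambda$, so $\Phi$ is jointly continuous outright. I would take joint continuity as the intended reading of the hypotheses.
\item If a purely abstract argument is wanted, I would aim to show that the family $\{\Phi(\cdot,w)\}_{w\in K}$, for $w$ ranging over the compact set $K=g_t(\Lambda)$, is equicontinuous. With separate continuity alone this can fail, so I would note the assumption explicitly rather than attempt it.
\end{itemize}
\item \emph{Passing to the limit.} Given joint continuity, and since $g_t$ is continuous so that $g_t(\mu_n)\to g_t(\mu)$, we get $\Phi(\bar\lambda,g_t(\mu))\ge\Phi(\lambda,g_t(\mu))$ for all $\lambda\in\Lambda$. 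Hence $\bar\lambda\in\mathcal{B}(\mu)$ and the graph is closed.
\end{itemize}

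Kakutani's theorem then yields $\lambda_\star\in\mathcal{B}(\lambda_\star)$, which completes the argument.

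An alternative that avoids set-valued maps is available when the maximizer is unique, for instance when $R$ is strictly convex. In that case $\mathcal{B}$ is single-valued and, by Berge's maximum theorem, continuous, so Brouwer's theorem applies directly. In general concavity is not strict, which is why I would use Kakutani. A second option is to regularize with $-\epsilon\|\lambda\|^2$, apply Brouwer, and let $\epsilon\to0$ along a convergent subsequence. This limiting step uses the same joint-continuity argument as the closed-graph step above.
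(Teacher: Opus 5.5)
Your proof follows the paper's. The paper also applies Kakutani to the best-response map $\lambda\mapsto\arg\max_{\mu\in\Lambda}\Phi(\mu,g_t(\lambda))$, with convex values coming from concavity. It gets nonempty values and upper hemicontinuity from Berge's maximum theorem, where you check the closed graph directly. Berge's theorem needs $(\mu,\lambda)\mapsto\Phi(\mu,g_t(\lambda))$ to be jointly continuous, and the paper asserts this without justification. So the point you flag is a gap in both arguments relative to the stated hypotheses. As you note, it is harmless in the affine-coupling instance.

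You do not need to add joint continuity as a hypothesis. Set $w_n:=g_t(\mu_n)\to w:=g_t(\mu)$. Your closed-graph step only needs $\limsup_n\Phi(\lambda_n,w_n)\le\Phi(\bar\lambda,w)$, because the other side, $\Phi(\lambda,w_n)\to\Phi(\lambda,w)$ for fixed $\lambda$, uses continuity in $w$ alone. This joint upper semicontinuity follows from concavity:
\begin{itemize}
\item Write $\phi_n:=\Phi(\cdot,w_n)$ and $\phi:=\Phi(\cdot,w)$. Fix $p\in\operatorname{relint}\Lambda$ and $\theta\in(0,1)$, and set $q_n:=(1-\theta)\lambda_n+\theta p\to\bar q:=(1-\theta)\bar\lambda+\theta p$.
\item All these points lie in the compact set $(1-\theta)\Lambda+\theta p\subset\operatorname{relint}\Lambda$. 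Pointwise convergence of finite concave functions on a relatively open convex set is uniform on its compact subsets (Rockafellar, \emph{Convex Analysis}, Thm.~10.8). Hence $\phi_n(q_n)\to\phi(\bar q)$.
\item Concavity gives $\phi_n(q_n)\ge(1-\theta)\phi_n(\lambda_n)+\theta\phi_n(p)$, so
\[
\limsup_{n}\phi_n(\lambda_n)\le\frac{\phi(\bar q)-\theta\,\phi(p)}{1-\theta}.
\]
\item Letting $\theta\downarrow0$ and using continuity of $\phi$ on $\Lambda$ gives $\limsup_n\phi_n(\lambda_n)\le\phi(\bar\lambda)$.
\end{itemize}
Your suspicion about equicontinuity is justified. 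Joint lower semicontinuity can genuinely fail at boundary points, for example when $\Lambda$ is a disc, but that half is never used. With this lemma, your Kakutani argument proves the theorem under exactly the stated hypotheses.
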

The proof is provided in \Cref{appendix-proof-fixed-point}. This theorem provides a crucial structural guarantee: if we treat $g_t(\lambda)$ as a primal update step, there always exists a dual solution $\lambda_t^\star$ that is optimal with respect to the exact primal variables it induces. To make this concrete and integrate it into our framework, we instantiate $g_t$ as our primal optimization step. Specifically, for a fixed $\lambda_t^\star$, the gradient descent update rule is defined as:
\begin{align*}
g_t(\lambda) = w_{t+1}(\lambda) :
\begin{cases}
\Lambda \xrightarrow{} \mathbb{R}^d  \\
\lambda\xrightarrow{}  w_t - \eta_t  \nabla f(w_t) \lambda
\end{cases}.
\end{align*}
with $\eta_t \geq 0$ the stepsize at iteration $t$. By embedding this structural property directly into the standard Nesterov acceleration framework, we obtain the following update rules for the primal variable $w$::
\begin{equation}
\label{eq-perfect-acceleration}
\begin{aligned}
\begin{cases}
w_{t}^{md}  & = \frac{A_{t-1}}{A_{t}} w_{t}^{ag} + \frac{\alpha_{t}}{A_{t}} w_{t} \\
w_{t+1}  & =  w_t - \alpha_t \nabla f(w_t^{md}) \lambda_t^{\star}    \\
w_{t+1}^{ag}  & = \frac{A_{t-1}}{A_{t}} w_{t}^{ag} +\frac{\alpha_{t}}{A_{t}} w_{t+1}
\end{cases}
\end{aligned}
\end{equation}
The step-size parameters $\alpha_t$ and accumulation weights $A_t$ follow a slight variation of the classical optimal sequences established in the accelerated optimization literature \citep{nesterov2018lectures}. With the initial condition $A_0 = 0$, and assuming $L$ is the Lipschitz constant of the gradients defined later in the text, the parameter scheduling is governed by:
\begin{equation}
\label{eq-step-sizes}
\begin{aligned}
\begin{cases}
\forall t \geq 1, \quad  \alpha_t = \frac{1 + \sqrt{1 + 4L A_{t-1}}}{4L} \\
\forall t \geq 1, \quad  A_t = A_{t-1} + \alpha_t.\\
\end{cases}
\end{aligned}
\end{equation} 
Regarding the dual variable $\lambda$, we assume access to an oracle capable of computing the exact fixed point $\lambda_t^\star$ guaranteed by \Cref{thm-fixed-point} at each iteration $t$. This fixed point satisfies the following condition:
\begin{equation}
\label{eq-perfect-fixed-point}
{\lambda_t^{\star}}^\top f\left(w_t^{md} - \frac{\alpha_t^2}{A_t} \nabla f(w_t^{md}) \lambda_t^{\star}\right) - R(\lambda_t^{\star} ) = \max_{\lambda \in \Lambda} \left[ \lambda^\top f\left(w_t^{md} - \frac{\alpha_t^2}{A_t} \nabla f(w_t^{md}) \lambda_t^{\star}\right) - R(\lambda) \right],
\end{equation}
where we identify the new aggregated iterate as $w_{t+1}^{ag} = w_t^{md} - \frac{\alpha_t^2}{A_t} \nabla f(w_t^{md})\lambda_t^{\star}$. We present the main convergence theorem.

\begin{theorem}
Suppose that \Cref{Assumption-linear-convex} holds. Following the update rules at \Cref{eq-perfect-acceleration,eq-step-sizes,eq-perfect-fixed-point}, after $T$ iterations, for all $w_\star \in \mathbb{R}^d$
\[  \max_{\lambda \in \Lambda} \Phi(\lambda, w_{T+1}^{ag}) - \max_{\lambda \in \Lambda} \Phi(\lambda, w_{\star}) = \mathcal{O} \left(\frac{ L \| w_\star - w_{\mathrm{init}}\|_2^2 }{T^2} \right) \]
\end{theorem}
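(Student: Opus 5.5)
We will run a standard Nesterov estimate-sequence (linear coupling) argument on the Lyapunov potential
\[
E_t := A_t\bigl(\Psi(w_{t+1}^{ag}) - \Psi(w_\star)\bigr) + \tfrac12\|w_{t+1}-w_\star\|_2^2,
\]
where $\Psi(w):=\max_{\lambda\in\Lambda}\Phi(\lambda,w)$. The goal is to show $E_t\le E_{t-1}$. Since $A_0=0$, this gives $A_T(\Psi(w_{T+1}^{ag})-\Psi(w_\star))\le \frac12\|w_\star-w_{\mathrm{init}}\|_2^2$. From \eqref{eq-step-sizes}, $\alpha_t\ge \frac{1}{4L}+\frac{\sqrt{A_{t-1}}}{2\sqrt L}$, which yields $\sqrt{A_t}\ge\sqrt{A_{t-1}}+\frac{1}{4\sqrt L}$ and hence $A_T\ge T^2/(16L)$. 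This gives the claimed $\mathcal{O}(L\|w_\star-w_{\mathrm{init}}\|^2/T^2)$ rate. We also use the identity $A_t = 2L\alpha_t^2$ implied by \eqref{eq-step-sizes}: indeed $2L\alpha_t^2-\alpha_t=A_{t-1}$. So $\alpha_t^2/A_t=1/(2L)$, and the aggregated step has stepsize $1/(2L)\le 1/L$.

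The central device is to freeze the dual variable at $\lambda_t^\star$ and work with the single convex $L$-smooth function $\phi_t(w):=\langle\lambda_t^\star,f(w)\rangle-R(\lambda_t^\star)$. This function is convex and $L$-smooth in $w$ by \Cref{Assumption-linear-convex}. Two facts link it to $\Psi$:
\begin{itemize}
\item By the fixed-point property \eqref{eq-perfect-fixed-point}, $\phi_t(w_{t+1}^{ag})=\Psi(w_{t+1}^{ag})$.
\item For any $w$, $\phi_t(w)\le\Psi(w)$. In particular $\phi_t(w_\star)\le\Psi(w_\star)$ and $\phi_t(w_t^{ag})\le \Psi(w_t^{ag})$.
\end{itemize}
So we run one step of accelerated gradient on $\phi_t$ from $w_t^{md}$. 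One checks that $w_{t+1}^{ag}=w_t^{md}-\frac{\alpha_t}{A_t}\alpha_t\nabla\phi_t(w_t^{md})$ is consistent with the convex-combination definition in \eqref{eq-perfect-acceleration}, since $w_{t+1}-w_t=-\alpha_t\nabla\phi_t(w_t^{md})$.

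Next comes the standard one-step inequality for $\phi_t$, which is the usual Nesterov computation:
\[
A_t\phi_t(w_{t+1}^{ag})-A_{t-1}\phi_t(w_t^{ag})-\alpha_t\phi_t(w_\star)\le \tfrac12\|w_t-w_\star\|^2-\tfrac12\|w_{t+1}-w_\star\|^2 .
\]
It is derived as follows:
\begin{itemize}
\item Apply $L$-smoothness at $w_t^{md}$ to $\phi_t(w_{t+1}^{ag})$.
\item Apply convexity to lower-bound $\phi_t(w_t^{ag})$ and $\phi_t(w_\star)$ by linearizations at $w_t^{md}$.
\item Use the three-point identity for $w_{t+1}=w_t-\alpha_t g$ with $g=\nabla\phi_t(w_t^{md})$.
\item Use $\frac{A_t}{2}L\|\frac{\alpha_t}{A_t}\alpha_t g\|^2=\frac{L\alpha_t^4}{2A_t}\|g\|^2=\frac{\alpha_t^2}{4}\|g\|^2\le\frac{\alpha_t^2}{2}\|g\|^2$, which is absorbed by the $-\frac{\alpha_t^2}{2}\|g\|^2$ term in the three-point identity.
\end{itemize}
The linear terms cancel by the choice of $w_t^{md}$ as the weighted combination $A_tw_t^{md}=A_{t-1}w_t^{ag}+\alpha_tw_t$.

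We then transfer this inequality to $\Psi$. Replace $\phi_t(w_{t+1}^{ag})$ by $\Psi(w_{t+1}^{ag})$, which is an equality by the fixed point. Lower-bound $-A_{t-1}\phi_t(w_t^{ag})\ge -A_{t-1}\Psi(w_t^{ag})$ and $-\alpha_t\phi_t(w_\star)\ge-\alpha_t\Psi(w_\star)$. This yields $E_t\le E_{t-1}$ after subtracting $A_t\Psi(w_\star)=(A_{t-1}+\alpha_t)\Psi(w_\star)$.

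The main subtlety is precisely this transfer. The frozen dual $\lambda_t^\star$ changes with $t$, so the potential cannot be written in terms of a fixed $\phi$. The proof works only because the fixed point makes the upper estimate tight at the new point $w_{t+1}^{ag}$, while both other evaluations only need the trivial inequality $\phi_t\le\Psi$. I will check carefully that $\nabla f(w)\lambda$ is the gradient of $w\mapsto\langle\lambda,f(w)\rangle$, and that the step in \eqref{eq-perfect-fixed-point} matches \eqref{eq-perfect-acceleration}.

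The existence of $\lambda_t^\star$ itself follows from \Cref{thm-fixed-point} applied with the continuous map $g_t(\lambda)=w_t^{md}-\frac{\alpha_t^2}{A_t}\nabla f(w_t^{md})\lambda$.
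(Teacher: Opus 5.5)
Your proposal is correct and follows essentially the same route as the paper's proof in \Cref{appendix-proof-accelerated-noiseless}: freeze the dual at $\lambda_t^\star$, run the standard Nesterov one-step estimate for the convex $L$-smooth function $w\mapsto\langle\lambda_t^\star,f(w)\rangle-R(\lambda_t^\star)$, use the fixed-point equality at $w_{t+1}^{ag}$ and the trivial bound by $\max_{\lambda\in\Lambda}\Phi(\lambda,\cdot)$ at $w_t^{ag}$ and $w_\star$, then telescope. Your Lyapunov-potential packaging is just cleaner bookkeeping of the same inequalities. One small correction: \eqref{eq-step-sizes} gives $4L\alpha_t^2-2\alpha_t=A_{t-1}$, i.e.\ $4L\alpha_t^2=A_t+\alpha_t$, so $A_t=2L\alpha_t^2$ holds only at $t=1$, and in general you have only $2L\alpha_t^2\le A_t$ (\Cref{lem-step-size}). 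Since your absorption step needs only $\frac{L\alpha_t^4}{2A_t}\le\frac{\alpha_t^2}{4}$, replacing the claimed equality by this inequality leaves the argument intact.
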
 
The full proof is deferred to \Cref{appendix-proof-accelerated-noiseless}. In general, computing the exact fixed point $\lambda_t^{\star}$ at each step is computationally intractable. Consequently, in practical implementations, we solve an inner optimization problem to approximate this fixed point. We formalize this in the following sections, explicitly accounting for the resulting approximation errors in the final convergence rate

\section{Main results}
In this section we efficiently implement the fixed-point  oracle approximately. To do so, we  employ the Majorization-Minimization (MM) principle to formulate a sequence of surrogate subproblems, which we then solve using an interior-point method.

\subsection{Cross-evaluation gap}
 Following the standard Majorization-Minimization (MM) framework \citep{Lange,Rangarajan}, the natural surrogate $\Tilde{g}_t : \mathbb{R}^d \xrightarrow[]{} \mathbb{R}$ for minimizing an $L$-smooth function $g : \mathbb{R}^d \xrightarrow[]{} \mathbb{R}$ at iteration $t$ is given by:
\[\Tilde{g}_t(w) := g(w_t^{md}) + \langle \nabla g(w_t^{md}), w- w_t^{md} \rangle + \frac{L}{2} \Vert{} w- w_t^{md}\Vert{}_2^2,\]
where $w_t^{md}$ denotes the current iterate. The crucial step in extending this approach to saddle-point problems is parameterizing the update rule as $\lambda \xrightarrow{} w_{t+1}^{ag}(\lambda)$ and deriving a surrogate  $\Tilde{\mu} \Tilde{f}(w_{t+1}(\lambda),w_t^{md})$ for any $\mu \in \Lambda$ for all $\mu \in \Lambda$. Ultimately, our algorithm aims to find a $\lambda \in \Lambda$ that minimizes the gap between $\max_{\mu}\Phi(\mu, w_{t+1}^{ag}(\lambda))$ and $\Phi(\lambda, w_{t+1}^{ag}(\lambda))$.

To precisely quantify this difference, we introduce the cross-evaluation gap $\epsilon_t$ defined as:
\begin{equation}
\label{eq-moreau-env}
\begin{aligned} 
\forall \lambda \in \Lambda, \quad S_t(\lambda) & := \frac{\alpha_t^2}{2 A_t} \Vert{}\nabla f(w_t^{md}) \lambda\Vert{}_2^2 - \lambda^{\top} f(w_t^{md}) \\
\forall \lambda, \mu \in \Lambda, \quad \epsilon_t(\lambda,\mu) & := \langle \nabla S_t(\lambda), \lambda - \mu \rangle + R(\lambda) - R(\mu).
\end{aligned}
\end{equation}
The following theorem characterizes the behavior of the saddle-point surrogate within the affine-coupling problem and specifies the class of inner subproblems that must be solved at each iteration.
\begin{theorem}[Cross-evaluation gap]
\label{thm-gap}
Suppose that \Cref{Assumption-linear-convex} holds. We follow the update rules at \Cref{eq-perfect-acceleration}, the cross-evaluation gap is strictly bounded by:
\begin{align*}
\forall \lambda,\mu \in \Lambda, \quad \Phi(\mu, w_{t+1}^{ag}(\lambda)) - \Phi(\lambda, w_{t+1}^{ag}(\lambda)) \le  \epsilon_t(\lambda, \mu) + \frac{L }{2 } \| w_{t+1}^{ag}(\lambda) - w_t^{md} \|_2^2
\end{align*}
and by taking the maximum over $\mu$, for any arbitrary $\lambda_{t+1} \in \Lambda $:
\begin{align*}
& \max_{\mu \in \Lambda}\Phi(\mu, w_{t+1}^{ag}(\lambda_{t+1})) - \Phi(\lambda_{t+1}, w_{t+1}^{ag}(\lambda_{t+1})) \\
\le & \max_{\mu \in \Lambda} \epsilon_t(\lambda_{t+1},\mu) + \frac{L}{2 } \| w_{t+1}^{ag}(\lambda_{t+1}) - w_t^{md} \|_2^2.
\end{align*}
\end{theorem}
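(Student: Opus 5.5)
The plan is to expand the difference $\Phi(\mu, w_{t+1}^{ag}(\lambda)) - \Phi(\lambda, w_{t+1}^{ag}(\lambda))$ directly and bound each coupling term by a first-order model around $w_t^{md}$. I write $M := \nabla f(w_t^{md}) \in \mathbb{R}^{d\times J}$ and $\eta := \alpha_t^2/A_t$. By the identification following \eqref{eq-perfect-fixed-point}, $w_{t+1}^{ag}(\lambda) = w_t^{md} - \eta M\lambda$. Set $\Delta := w_{t+1}^{ag}(\lambda) - w_t^{md} = -\eta M\lambda$. Then
\[
\Phi(\mu, w_{t+1}^{ag}(\lambda)) - \Phi(\lambda, w_{t+1}^{ag}(\lambda)) = \mu^\top f(w_t^{md}+\Delta) - \lambda^\top f(w_t^{md}+\Delta) + R(\lambda) - R(\mu),
\]
and the two coupling terms are handled separately using \Cref{Assumption-linear-convex}.

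\textbf{Step 1 (upper bound on the $\mu$-term).} Since $\mu\in\Lambda$, the map $w\mapsto \mu^\top f(w)$ is $L$-smooth. Its gradient at $w_t^{md}$ is $M\mu$, so
\[
\mu^\top f(w_t^{md}+\Delta) \le \mu^\top f(w_t^{md}) + \langle M\mu, \Delta\rangle + \tfrac{L}{2}\|\Delta\|_2^2 .
\]

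\textbf{Step 2 (lower bound on the $\lambda$-term).} Since $\lambda\in\Lambda$, the map $w\mapsto \lambda^\top f(w)$ is convex, so
\[
\lambda^\top f(w_t^{md}+\Delta) \ge \lambda^\top f(w_t^{md}) + \langle M\lambda, \Delta\rangle .
\]
Subtracting the two inequalities and substituting $\Delta = -\eta M\lambda$ gives
\[
\Phi(\mu, w_{t+1}^{ag}(\lambda)) - \Phi(\lambda, w_{t+1}^{ag}(\lambda)) \le (\mu-\lambda)^\top f(w_t^{md}) + \eta\,\lambda^\top M^\top M(\lambda-\mu) + R(\lambda)-R(\mu) + \tfrac{L}{2}\|\Delta\|_2^2 .
\]

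\textbf{Step 3 (identify $\epsilon_t$).} From \eqref{eq-moreau-env}, $\nabla S_t(\lambda) = \eta M^\top M\lambda - f(w_t^{md})$. Hence
\[
\langle \nabla S_t(\lambda), \lambda-\mu\rangle = \eta\,\lambda^\top M^\top M(\lambda-\mu) + (\mu-\lambda)^\top f(w_t^{md}).
\]
This is exactly the first two terms of the bound in Step 2. Adding $R(\lambda)-R(\mu)$ yields $\epsilon_t(\lambda,\mu)$, which proves the first inequality.

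\textbf{Step 4 (second inequality).} Fix $\lambda = \lambda_{t+1}\in\Lambda$. The term $\tfrac{L}{2}\|w_{t+1}^{ag}(\lambda_{t+1}) - w_t^{md}\|_2^2$ does not depend on $\mu$. Taking the maximum over $\mu\in\Lambda$ on both sides therefore gives the claim. Both maxima are attained: $\Lambda$ is compact, and both sides are continuous in $\mu$, assuming $R$ is lower semicontinuous, as is implicit in the compatibility assumption; otherwise one uses suprema.

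No step is a real obstacle. The main point of care is applying smoothness to $\mu$ and only convexity to $\lambda$, which is the choice that produces the one-sided $\tfrac{L}{2}\|\Delta\|_2^2$ term. The rest is verifying that the algebra in Step 3 matches the sign conventions in the definition of $S_t$.
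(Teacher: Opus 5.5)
Your proposal is correct and follows essentially the same route as the paper's proof. It applies $L$-smoothness of $w\mapsto\mu^\top f(w)$ to the adversary term and convexity of $w\mapsto\lambda^\top f(w)$ to the $\lambda$-term, substitutes $w_{t+1}^{ag}(\lambda)-w_t^{md}=-\frac{\alpha_t^2}{A_t}\nabla f(w_t^{md})\lambda$ (which the paper derives from $w_{t+1}^{ag}-w_t^{md}=\frac{\alpha_t}{A_t}(w_{t+1}-w_t)$ rather than citing the identification after the fixed-point condition), and recognizes $\nabla S_t(\lambda)$ in the resulting linear term.
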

The subsequent theorem demonstrates that minimizing this surrogate directly yields the desired accelerated convergence rate.
\begin{theorem}[Accelerated algorithm]
Suppose that \Cref{Assumption-linear-convex} holds. We follow the update rules at \Cref{eq-perfect-acceleration,eq-step-sizes} with $\lambda_{t+1}$ verifying the follow property with $\epsilon_t \geq 0$:
\begin{equation}
\label{eq-epsilont-t}
\max_{\lambda \in \Lambda}  \Phi (\lambda,w_{t+1}^{ag}) - \Phi(\lambda_{t+1}, w_{t+1}^{ag}) \leq \epsilon_t + \frac{L}{2} \|w_{t+1}^{ag} - w_t^{md} \|_2^2
\end{equation}
Suppose that \Cref{eq-epsilont-t} holds for every iteration $t$. Then, after $T$ iterations, we know that for all $w_\star \in \mathbb{R}^d$
\[  \max_{\lambda \in \Lambda} \Phi(\lambda, w_{T+1}^{ag}) - \max_{\lambda \in \Lambda} \Phi(\lambda, w_{\star})= \mathcal{O} \left(\frac{ L \| w_\star - w_{\mathrm{init}}\|_2^2 }{T^2} +  \frac{1}{T^2} \sum_{t=1}^{T} t^2 \epsilon_t  \right) \]
\end{theorem}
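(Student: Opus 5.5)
We will run the standard estimate-sequence / potential-function analysis of Nesterov's method, with the dual iterate playing the role of a certificate and the slack $\epsilon_t$ carried as an additive error. Throughout, write $F(w):=\max_{\lambda\in\Lambda}\Phi(\lambda,w)$ and $\phi_t(w):=\Phi(\lambda_{t+1},w)=\lambda_{t+1}^\top f(w)-R(\lambda_{t+1})$. By \Cref{Assumption-linear-convex}, $\phi_t$ is convex and $L$-smooth with gradient $\nabla f(w)\lambda_{t+1}$, and $\phi_t\le F$ pointwise. The update \eqref{eq-perfect-acceleration} is read with $\lambda_t^\star$ replaced by $\lambda_{t+1}$, so that $w_{t+1}=w_t-\alpha_t\nabla\phi_t(w_t^{md})$. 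The potential we aim to control is
\[
E_t:=A_t\bigl(F(w_{t+1}^{ag})-F(w_\star)\bigr)+\tfrac12\|w_{t+1}-w_\star\|_2^2 .
\]
The goal is the one-step inequality $E_t\le E_{t-1}+A_t\epsilon_t$.

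\emph{Step 1: upper bound via \eqref{eq-epsilont-t}.} Hypothesis \eqref{eq-epsilont-t}, together with $L$-smoothness of $\phi_t$, gives
\[
F(w_{t+1}^{ag})\le \phi_t(w_{t+1}^{ag})+\epsilon_t+\tfrac L2\|w_{t+1}^{ag}-w_t^{md}\|^2
\le \phi_t(w_t^{md})+\langle\nabla\phi_t(w_t^{md}),w_{t+1}^{ag}-w_t^{md}\rangle+L\|w_{t+1}^{ag}-w_t^{md}\|^2+\epsilon_t .
\]
This is an effective smoothness constant of $2L$, which is why the step rule \eqref{eq-step-sizes} uses $4L$ in place of the usual $2L$.

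\emph{Step 2: lower bounds at the comparison points.} By convexity of $\phi_t$ and $\phi_t\le F$, we have $\phi_t(w_t^{md})+\langle\nabla\phi_t(w_t^{md}),u-w_t^{md}\rangle\le F(u)$ for $u=w_t^{ag}$ and for $u=w_\star$. Using $w_{t+1}^{ag}-w_t^{md}=\frac{\alpha_t}{A_t}(w_{t+1}-w_t)$ from the coupling updates, multiply the Step 1 inequality by $A_t$. Split the linear term as $A_{t-1}(w_t^{ag}-w_t^{md})+\alpha_t(w_{t+1}-w_t^{md})$ and apply the two lower bounds. The mirror-step identity
\[
\alpha_t\langle\nabla\phi_t(w_t^{md}),w_{t+1}-w_\star\rangle=\tfrac12\|w_t-w_\star\|^2-\tfrac12\|w_{t+1}-w_\star\|^2-\tfrac12\|w_{t+1}-w_t\|^2
\]
then yields
\[
E_t\le E_{t-1}+A_t\epsilon_t+\Bigl(\tfrac{L\alpha_t^2}{A_t}-\tfrac12\Bigr)\|w_{t+1}-w_t\|^2 .
\]
The bracket is nonpositive exactly when $2L\alpha_t^2\le A_t=A_{t-1}+\alpha_t$. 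The choice in \eqref{eq-step-sizes} solves $2L\alpha_t^2-\alpha_t-A_{t-1}=0$, so it holds with equality.

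\emph{Step 3: telescoping and growth of $A_t$.} Since $A_0=0$, summing gives
\[
A_T\bigl(F(w_{T+1}^{ag})-F(w_\star)\bigr)\le \tfrac12\|w_1-w_\star\|^2+\sum_{t=1}^T A_t\epsilon_t ,
\]
with $w_1=w_{\mathrm{init}}$. From $\alpha_t\ge \frac{1}{4L}+\sqrt{A_{t-1}/(4L)}$ a standard induction shows $\sqrt{A_t}\ge\sqrt{A_{t-1}}+\frac1{2\sqrt{4L}}$, hence $A_t\ge t^2/(16L)$. The upper bound $A_t\le c\,t^2/L$ follows similarly from $\alpha_t\le \frac1{2L}+\sqrt{A_{t-1}/(4L)}$. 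Dividing by $A_T$ gives the $L\|w_\star-w_{\mathrm{init}}\|^2/T^2$ term, and the error term becomes $\frac{1}{A_T}\sum_t A_t\epsilon_t\lesssim\frac{1}{T^2}\sum_t t^2\epsilon_t$.

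\emph{Main obstacle.} The delicate point is Step 2. The dual weight $\lambda_{t+1}$ changes every iteration, so the functions $\phi_t$ differ across $t$, and the telescoping must run on $F$ rather than on $\phi_t$. This is exactly why we use only the one-sided relations $\phi_t\le F$ (at $w_t^{ag}$ and $w_\star$) and the hypothesis \eqref{eq-epsilont-t} (at $w_{t+1}^{ag}$). I will check carefully that no term in the argument requires $\phi_t$ and $F$ to agree anywhere else.
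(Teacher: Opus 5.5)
Your proposal is correct and follows essentially the same route as the paper's proof. Both use the $L$-smoothness plus \eqref{eq-epsilont-t} upper bound at $w_{t+1}^{ag}$, the two one-sided convexity bounds $\Phi(\lambda_{t+1},\cdot)\le\max_{\lambda}\Phi(\lambda,\cdot)$ at $w_t^{ag}$ and $w_\star$, and the mirror-step identity, then telescope once the $\bigl(\tfrac{L\alpha_t^2}{A_t}-\tfrac12\bigr)\|w_{t+1}-w_t\|_2^2$ term is shown nonpositive.

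One slip needs fixing. \eqref{eq-step-sizes} does not solve $2L\alpha_t^2=A_{t-1}+\alpha_t$, since that would require $\sqrt{1+8LA_{t-1}}$. It instead gives $4L\alpha_t^2=A_t+\alpha_t$, hence $2L\alpha_t^2=\tfrac12(A_t+\alpha_t)\le A_t$ as in \Cref{lem-step-size}. So the bracket is nonpositive (with equality only at $t=1$) rather than identically zero, and your argument goes through unchanged.
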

We defer the detailed proof to \Cref{appendix-proof-accelerated-with-noise} and \Cref{appendix-proof-cross-gap}. Furthermore, the previous theorem highlights that accelerating convergence strictly relies on solving the inner problems to a precision of $\epsilon_t = \mathcal{O}(\frac{1}{t^4})$ .

\section{Interior-Point Method and cross-evaluation gap}
At each outer iteration $t$, we solve the following convex inner minimization subproblem:
\[\min_{\lambda \in \Lambda} h_t(\lambda) := S_t(\lambda) + R(\lambda).\]
where $S_t$ is a quadratic function define in \Cref{eq-moreau-env}. Since $S_t$ and $R$ are both assumed to be convex, the following inequality holds for all $\lambda,\mu \in \Lambda$ :
\begin{equation}\label{eq:chain}
(S_t + R)(\lambda) - (S_t + R)(\mu)  \leq  \langle \nabla S_t(\lambda), \lambda - \mu \rangle + R(\lambda) - R(\mu)   \le \langle \nabla (S_t+R)(\lambda), \lambda - \mu \rangle.
\end{equation}
Taking the maximum over $\mu \in \Lambda$ throughout this inequality naturally connects three fundamental quantities of the subproblem:
\begin{itemize}
    \item \textbf{Optimality gap}: $\max_{\mu \in \Lambda} \Big[ (S_t + R)(\lambda) - (S_t + R)(\mu) \Big] = h_t(\lambda) - \min h_t$
    \item \textbf{Cross-evaluation gap}: $\max_{\mu \in \Lambda} \Big[ \langle \nabla S_t(\lambda), \lambda - \mu \rangle + R(\lambda) - R(\mu) \Big]  = \max_{\mu \in \Lambda} \epsilon_t(\lambda,\mu)$
    \item \textbf{Frank-Wolfe gap}: $\max_{\mu \in \Lambda} \langle \nabla (S_t+R)(\lambda), \lambda - \mu \rangle = \max_{\mu \in \Lambda} \langle \nabla h_t(\lambda),\lambda - \mu \rangle$
\end{itemize}
The chain of inequalities in \Cref{eq:chain} demonstrates that the optimality gap is upper-bounded by the cross-evaluation gap, which is itself bounded by the Frank-Wolfe  gap. When $R$ is self-concordant, the problem of minimising $h_t$ is well-studied, allowing us to leverage standard algorithms and established convergence guarantees for the Frank-Wolfe gap. In such cases, these results can be directly applied to our cross-evaluation gap to ensure general convergence (see \Cref{sec-inner-solve-simple} for an example when $R$ is quadratic).

However, in the general setting, $R$ may be \textit{compatible} (i.e., $cR+B$ is self-concordant) rather than strictly self-concordant. This is the case, for example, when $R(\lambda):= \sum_{j=1}^J \lambda_j \log (\lambda_j)$ is the negative Shannon entropy and $B_{\Lambda} = -\sum_{j=1}^J \log(\lambda_j)$ is the negative Burg entropy (See \Cref{lemma-entropy}).

To the best of our knowledge, the Frank-Wolfe gap evaluated at the output of standard interior-point solvers is not known to be tightly bounded. For instance, the path-following algorithm of \citet{nesterov1994interior}, which optimizes the barrier-penalized objective  $h_t + \frac{B_{\Lambda}}{\tau}$ , returns a solution $\lambda \in \Lambda$ satisfying 
\[ h_t(\lambda) - \min h_t \leq \frac{\nu}{\tau},\] but provides no theoretical guarantee for the Frank-Wolfe gap itself. Therefore, to extract a valid upper bound for the cross-evaluation gap $\epsilon_t$, we must delve into the underlying mechanics of the path-following framework \citep{nesterov1994interior} and explicitly exploit the quadratic structure of $S_t$.

In the remainder of this section, we first provide a concrete example of adapting an existing solver for our subproblem when $R$ is quadratic (\Cref{sec-inner-solve-simple}). We then establish theoretical guarantees for the more challenging general case of compatible regularizers in \Cref{sec-inner-solve-general}.

\subsection{Standard Geometries and Optimality Certificates}
\label{sec-inner-solve-simple}

In this section, we demonstrate how to integrate highly optimized, off-the-shelf solvers to address specific affine-coupling settings, culminating in a practical, end-to-end algorithmic solution. Specifically, when $R$ is quadratic and $\Lambda$ is a simplex, the inner subproblem reduces to a standard quadratic program. We can therefore delegate it to solvers as SciPy's \texttt{trust-constr}. Such solvers typically measure convergence via the inexact Karush-Kuhn-Tucker (KKT) optimality conditions of the constrained problem. To bridge this practical solver output with our theoretical guarantees, the following lemma translates these KKT tolerances into a formal bound on the Frank-Wolfe gap, thereby establishing an exact stopping criterion for our algorithm.
\begin{lemma}
\label{lem-kkt-pure-quadratic}
Suppose an interior-point algorithm terminates at a strictly feasible output $\mu_k \in\text{relint}(\Delta_J)$, returning a barrier parameter $1 /\tau_k \le \epsilon_\tau$ and dual multipliers $\nu_k \in \mathbb{R}$ and $z_k \in \mathbb{R}_{+}^{J}$ that satisfy the following inexact Karush-Kuhn-Tucker (KKT) conditions
\begin{itemize}
    \item \textbf{Approximate Stationarity} $\vert{}\vert{}\nabla h_{t}(\mu_{k})-\nu_{k}1-z_{k}\vert{}\vert{}_{\infty}\le\epsilon_{g} $
    \item \textbf{Perturbed Complementary Slackness} $\mu_{k,j} z_{k,j} = 1/\tau_{k}$ for all $j\in\{1,...,J\}$
\end{itemize}
where $\epsilon_{g}$ is the tolerance bar for the stationary condition. Then, the Frank-Wolfe gap at $\mu_k$ is strictly bounded by:
\begin{align*}
\max_{\mu \in \Delta_J} \langle \nabla h_t(\mu_k), \mu_k - \mu \rangle \le J\epsilon_{\tau}+2\epsilon_{g}.
\end{align*}
\end{lemma}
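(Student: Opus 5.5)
The plan is to split the gradient as $\nabla h_t(\mu_k) = \nu_k\mathbf{1} + z_k + r_k$, where $r_k := \nabla h_t(\mu_k)-\nu_k\mathbf{1}-z_k$ is the stationarity residual with $\|r_k\|_\infty\le\epsilon_g$. Then we bound the Frank-Wolfe gap term by term, uniformly over $\mu\in\Delta_J$. Since the gap is linear in $\mu$, it is enough to bound $\langle \nabla h_t(\mu_k),\mu_k-\mu\rangle$ for arbitrary $\mu\in\Delta_J$; maximizing would reduce to the vertices, but we do not need that.

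\textbf{Step 1 (the multiplier $\nu_k$).} Both $\mu_k$ and $\mu$ lie in $\Delta_J$, so $\langle \mathbf{1},\mu_k-\mu\rangle = 1-1 = 0$. Hence the term $\nu_k\langle\mathbf{1},\mu_k-\mu\rangle$ vanishes, whatever the sign or size of $\nu_k$.

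\textbf{Step 2 (the slack multiplier $z_k$).} Write $\langle z_k,\mu_k-\mu\rangle = \sum_j z_{k,j}\mu_{k,j} - \langle z_k,\mu\rangle$. By perturbed complementary slackness, the first sum equals $J/\tau_k \le J\epsilon_\tau$. The second term is nonpositive after the minus sign, because $z_k\in\mathbb{R}^J_+$ and $\mu\ge 0$. So this contribution is at most $J\epsilon_\tau$.

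\textbf{Step 3 (the residual $r_k$).} By H\"older, $\langle r_k,\mu_k-\mu\rangle \le \|r_k\|_\infty\|\mu_k-\mu\|_1$. For two points of the simplex, $\|\mu_k-\mu\|_1\le \|\mu_k\|_1+\|\mu\|_1 = 2$, so this contribution is at most $2\epsilon_g$.

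Summing the three contributions gives $\langle \nabla h_t(\mu_k),\mu_k-\mu\rangle\le J\epsilon_\tau+2\epsilon_g$ for every $\mu\in\Delta_J$, and taking the maximum yields the claim.

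There is no real obstacle here. The only point needing care is Step 2: we must use the nonnegativity of $z_k$ to discard $-\langle z_k,\mu\rangle$ rather than trying to bound it. We should also note that strict feasibility of $\mu_k$ (membership in $\mathrm{relint}(\Delta_J)$) is what makes the complementary slackness identity $\mu_{k,j}z_{k,j}=1/\tau_k$ meaningful with finite $z_k$; it plays no further role in the bound.
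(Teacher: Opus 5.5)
Your proof is correct and follows essentially the same route as the paper. Both decompose $\nabla h_t(\mu_k)=\nu_k\mathbf{1}+z_k+r_k$, remove the $\nu_k$ term using $\langle\mathbf{1},\mu\rangle=1$, obtain $J/\tau_k$ from complementary slackness, and use $z_k\ge 0$ together with H\"older on the residual. The only cosmetic difference is that the paper computes the minimum over the simplex explicitly as $\nu_k+\min_j(z_{k,j}+r_{k,j})$ and bounds $\langle r_k,\mu_k\rangle$ and $-\min_j(z_{k,j}+r_{k,j})$ by $\epsilon_g$ each, whereas you bound uniformly in $\mu$ via $\|\mu_k-\mu\|_1\le 2$.
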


We defer the proof of this lemma to \Cref{sec-kkt-pure-quadratic}. To ensure the Frank-Wolfe duality gap remains below the target precision $\frac{L}{t^4}$, we set the tolerances to $\epsilon_{\tau} := \frac{L}{2Jt^4}$ and $\epsilon_g := \frac{L}{4t^4}$. Given these choices, \citet{boyd2004convex} establish that a standard quadratic solver satisfies the stopping criteria in $\mathcal{O} \left( \sqrt{J} \log \left( \frac{1}{\epsilon_\tau} \right) \right)$ iterations. Consequently, \Cref{alg:accelerated_method_quadratic} yields an end-to-end algorithm achieving an $\mathcal{O}(1/T^2)$ accelerated convergence rate for the affine-coupling problem with a quadratic regularizer $R$ over the simplex $\Lambda = \Delta_J$.

\begin{algorithm}
\caption{Accelerated Gradient Method with Quadratic surrogate}
\label{alg:accelerated_method_quadratic}
\begin{algorithmic}[1]
\State \textbf{Initialize:} Choose $w_1   \in \mathbb{R}^d$. Set $L$ and $T$. 
\State Set $A_0 =0, w_1^{md} = w_1 = w_1^{ag} $
\For{$1\leq t \leq T$}
    \State Compute $\alpha_t = \frac{1 + \sqrt{1 + 4L A_{t-1}}}{4L}, A_t = A_{t-1} + \alpha_t $
    \State Compute $w_{t}^{md} = \frac{A_{t-1}}{A_{t}} w_{t}^{ag} + \frac{\alpha_{t}}{A_{t}} w_{t}$
    \State Compute the Jacobian $\nabla f(w_t^{md})$ and the vector $f(w_t^{md})$
    \State Compute the surrogate $h_t(\lambda) := \frac{\alpha_t^2}{2 A_t} \|\nabla f(w_t^{md}) \lambda \|_2^2 - \lambda^{\top} f(w_t^{md}) + R(\lambda)$
    \State Call QP solver to minimize $h_t$ in the simplex with tolerance rates $\left( \frac{L}{2Jt^4}, \frac{L}{4t^4} \right)$
    \State \qquad Obtain $\lambda_t^{\star} \in \Delta_J $
    \State Update $w_{t+1} = w_t - \alpha_t \nabla f(w_t^{md}) \lambda_t^{\star}$
    \State Update $w_{t+1}^{ag} = \frac{A_{t-1}}{A_{t}} w_{t}^{ag} +\frac{\alpha_{t}}{A_{t}} w_{t+1}$
\EndFor
\State \Return $w_{T+1}^{ag}$
\end{algorithmic}
\end{algorithm}

\subsection{Custom Path-Following Method for General Barriers}
\label{sec-inner-solve-general}
In this section, we analyze the underlying mechanics of the interior point solver to derive a strict upper bound for the cross-evaluation gap, $\epsilon_t$. To formally establish convergence guarantees for the interior point method, we introduce the following condition on the optimal solution:
\begin{assumption}\label{ass:init}
Let $w_\star \in \mathbb{R}^d$ be the global minimum. We assume that $\forall 1 \le i, j \le J$, \[ |f_i(w_\star) - f_j(w_\star)| \le S_\star.\]
\end{assumption}
Following standard practices in interior point methods \citep{nesterov1994interior}, we consider the barrier-penalized objective parameterized by $\tau \ge 1$:
\begin{equation}
F_\tau(\lambda) := \tau(S_t(\lambda) + R(\lambda)) + B_\Lambda(\lambda).
\end{equation}
Under our working assumptions, $F_\tau$ is strictly convex and $1$-self-concordant. Minimizing this objective naturally yields the following Newton system, which we subsequently use to bound the cross-evaluation gap $\epsilon_t$.
\begin{definition}[Newton System and Decrement]\label{def-KKT}
Consider $F_\tau$ twice-continuously differentiable and a strictly feasible point $\mu \in \mathrm{relint}(\Lambda)$. Let $\{ A\lambda+ b\}$ be the Affine Hull of $\Lambda$. The Newton step direction $\Delta\mu \in \mathbb{R}^J$ and its associated local equality multiplier $s \in \mathbb{R}$ are defined as the solution to the symmetric linear system:
\begin{equation}
\begin{bmatrix} 
\nabla^2 F_\tau(\mu) & A \\ 
A & 0 
\end{bmatrix} 
\begin{bmatrix} 
\Delta\mu \\ 
s 
\end{bmatrix} 
= 
\begin{bmatrix} 
-\nabla F_\tau(\mu) \\ 
0 
\end{bmatrix}
\end{equation}
When a solution exists, we quantify the proximity to the exact central path with Newton decrement:
\begin{equation}
\delta(\mu, F_\tau) := \sqrt{\Delta\mu^\top \nabla^2 F_\tau(\mu) \Delta\mu}.
\end{equation}
\end{definition}
\begin{theorem}[Inner problem Upper bound via KKT block]
\label{thm-inner-problem-core}
Under \Cref{Assumption-linear-convex}, we consider a fixed $\tau \geq 1 $ and  $\mu_k \in  \text{relint}(\Lambda)$. Suppose that the KKT block defined on $(\mu_k, F_\tau)$ according to \Cref{def-KKT} has a feasible solution $(\Delta \mu_k, s_k)$ and there exists $ \beta \in [0,1)$,
\begin{align*}
\delta(\mu_k,F_{\tau})\leq \beta^2,
\end{align*}
then for all $\lambda \in \Lambda $
\begin{equation}
\epsilon_t(\mu_k,\lambda) \leq 
\frac{\nu}{\tau} + \frac{\sqrt{\nu}}{\tau} \left(\frac{\beta}{1-\beta}\right) + \frac{\omega_{\star}(\beta)}{\tau} + \frac{\beta}{1-\beta} \frac{2G\alpha_t}{\sqrt{\tau A_t}}
\end{equation}
where $\omega_\star(\beta) := -\beta - \ln(1-\beta)$.
\end{theorem}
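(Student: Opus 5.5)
We bound $\epsilon_t(\mu_k,\lambda)=\langle \nabla S_t(\mu_k),\mu_k-\lambda\rangle+R(\mu_k)-R(\lambda)$ by comparing $\mu_k$ with the exact minimizer $\mu_\tau^\star$ of $F_\tau$ on $\operatorname{relint}(\Lambda)$. We split
\[
\epsilon_t(\mu_k,\lambda)=\epsilon_t(\mu^\star_\tau,\lambda)+\big[\epsilon_t(\mu_k,\lambda)-\epsilon_t(\mu^\star_\tau,\lambda)\big]
\]
and bound the two pieces separately.

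\textbf{Central-path term.} The optimality condition of $F_\tau$ on the affine hull gives $\tau(\nabla S_t+\nabla R)(\mu^\star_\tau)+\nabla B_\Lambda(\mu^\star_\tau)\perp(\mu^\star_\tau-\lambda)$ for $\lambda\in\Lambda$. Convexity of $R$ gives $R(\mu^\star_\tau)-R(\lambda)\le\langle\nabla R(\mu^\star_\tau),\mu^\star_\tau-\lambda\rangle$. Combining the two,
\[
\epsilon_t(\mu^\star_\tau,\lambda)\le \frac{1}{\tau}\langle\nabla B_\Lambda(\mu^\star_\tau),\lambda-\mu^\star_\tau\rangle\le\frac{\nu}{\tau},
\]
where the last step is the classical barrier inequality $\langle\nabla B(x),y-x\rangle\le\nu$ for $y\in\Lambda$. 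If $R$ is not differentiable at the boundary, we argue with subgradients or by a limiting argument in $\lambda\in\operatorname{relint}(\Lambda)$.

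\textbf{Deviation term.} Define $\phi(\mu):=\epsilon_t(\mu,\lambda)$, which is affine in $\lambda$. We have
\[
\phi(\mu_k)-\phi(\mu^\star_\tau)=\langle\nabla S_t(\mu_k)-\nabla S_t(\mu^\star_\tau),\mu^\star_\tau-\lambda\rangle+\langle\nabla S_t(\mu_k),\mu_k-\mu^\star_\tau\rangle+R(\mu_k)-R(\mu^\star_\tau).
\]
We group the last two terms as $\frac1\tau\big[F_\tau(\mu_k)-F_\tau(\mu^\star_\tau)\big]$ plus barrier differences and a quadratic remainder of $S_t$. That remainder has the favorable sign, since $S_t(\mu_k)-S_t(\mu^\star)\le\langle\nabla S_t(\mu_k),\mu_k-\mu^\star\rangle$; we need the reverse direction, so we keep the Bregman term $D_{S_t}$ explicitly.

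The self-concordance machinery supplies the following, using $\delta(\mu_k,F_\tau)\le\beta^2\le\beta$:
\begin{itemize}
\item $\|\mu_k-\mu^\star_\tau\|_{\nabla^2F_\tau(\mu_k)}\le\beta/(1-\beta)$;
\item $F_\tau(\mu_k)-F_\tau(\mu^\star_\tau)\le\omega_\star(\beta)$, which yields the $\omega_\star(\beta)/\tau$ term;
\item the barrier difference satisfies $|\langle\nabla B_\Lambda,\mu_k-\mu^\star\rangle|\le\sqrt\nu\,\|\mu_k-\mu^\star\|_{\nabla^2B}\le\sqrt{\nu}\,\beta/(1-\beta)$, by the $\nu$-barrier definition and $\nabla^2B\preceq\nabla^2F_\tau$. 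After dividing by $\tau$ this gives the $\frac{\sqrt\nu}{\tau}\frac{\beta}{1-\beta}$ term.
\end{itemize}

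\textbf{Gradient cross term (main obstacle).} It remains to bound $\langle\nabla S_t(\mu_k)-\nabla S_t(\mu^\star),\mu^\star-\lambda\rangle$. Write $J_t=\nabla f(w_t^{md})$ and $H=\frac{\alpha_t^2}{A_t}J_t^\top J_t$, so that $\nabla S_t(\mu)=H\mu-f(w_t^{md})$. The cross term equals
\[
\frac{\alpha_t^2}{A_t}\langle J_t(\mu_k-\mu^\star),J_t(\mu^\star-\lambda)\rangle.
\]
Since $\tau H\preceq\nabla^2F_\tau$, we get $\|J_t(\mu_k-\mu^\star)\|_2\le\sqrt{A_t/(\tau\alpha_t^2)}\,\beta/(1-\beta)$. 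The bounded-gradient assumption gives $\|J_t(\mu^\star-\lambda)\|_2\le 2G$. Multiplying, the cross term is at most $\frac{\beta}{1-\beta}\frac{2G\alpha_t}{\sqrt{\tau A_t}}$, which is exactly the last term.

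The delicate part is arranging the decomposition so that only favorable-sign Bregman terms are discarded. In particular, the $S_t$ remainder must be absorbed correctly, possibly by re-centering the split at $\mu_k$ rather than $\mu^\star$. We also need the Newton decrement defined through the KKT block to control the local norm on the affine hull.

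Summing the central-path bound and the three deviation contributions gives the four terms of the stated bound.
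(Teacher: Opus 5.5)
Your central-path estimate $\epsilon_t(\mu^\star,\lambda)\le\nu/\tau$ and your $\omega_\star$ and barrier bullets are fine. The deviation term, however, has a genuine gap. With $H=\frac{\alpha_t^2}{A_t}J_t^\top J_t$ the exact identity is
\[
\langle\nabla S_t(\mu_k),\mu_k-\mu^\star\rangle+R(\mu_k)-R(\mu^\star)=\tfrac1\tau\bigl[F_\tau(\mu_k)-F_\tau(\mu^\star)\bigr]+\tfrac1\tau\bigl[B_\Lambda(\mu^\star)-B_\Lambda(\mu_k)\bigr]+\tfrac12\|\mu_k-\mu^\star\|_H^2 .
\]
So the remainder $D_{S_t}(\mu^\star,\mu_k)=\frac12\|\mu_k-\mu^\star\|_H^2$ enters with a plus sign. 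It is unfavorable, and your ``favorable sign'' remark contradicts your own next clause. Keeping it ``explicitly'' does not bound it. Via $\tau H\preceq\nabla^2F_\tau$ it costs an extra $\frac{1}{2\tau}\bigl(\frac{\beta}{1-\beta}\bigr)^2$, which is not one of the four terms. It also cannot be hidden in $\omega_\star(\beta)/\tau$, because the other half of that quadratic is already inside $F_\tau(\mu_k)-F_\tau(\mu^\star)$. Your closing sentence is therefore not justified: there is a fourth deviation contribution that you never control.

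The missing step is to merge this remainder with the gradient cross term before applying Cauchy--Schwarz, using convexity of $\Lambda$:
\[
\langle H(\mu_k-\mu^\star),\mu^\star-\lambda\rangle+\tfrac12\|\mu_k-\mu^\star\|_H^2=\bigl\langle H(\mu_k-\mu^\star),\tfrac{\mu_k+\mu^\star}{2}-\lambda\bigr\rangle .
\]
Since the midpoint lies in $\Lambda$, $\|J_t(\tfrac{\mu_k+\mu^\star}{2}-\lambda)\|_2\le 2G$, which yields exactly $\frac{\beta}{1-\beta}\frac{2G\alpha_t}{\sqrt{\tau A_t}}$.

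The paper achieves the same thing by the re-centering you hint at. It pairs the gradient difference with $\mu_k-\lambda$ rather than $\mu^\star-\lambda$, and tests the optimality of $\mu^\star$ against $\mu_k-\lambda$. This gives $\epsilon_t(\mu_k,\lambda)=\langle H(\mu_k-\mu^\star),\mu_k-\lambda\rangle-\frac1\tau\langle\nabla B_\Lambda(\mu^\star),\mu_k-\lambda\rangle+D_R(\mu_k,\mu^\star)-D_R(\lambda,\mu^\star)$. The first term is bounded using $\mu_k,\lambda\in\Lambda$, and the paper then uses $D_R(\mu_k,\mu^\star)\le\frac1\tau D_{F_\tau}(\mu_k,\mu^\star)=\frac1\tau[F_\tau(\mu_k)-F_\tau(\mu^\star)]$.

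One smaller point: once you write $B_\Lambda(\mu^\star)-B_\Lambda(\mu_k)\le\langle\nabla B_\Lambda(\mu^\star),\mu^\star-\mu_k\rangle$, the Cauchy--Schwarz step needs the local norm at $\mu^\star$, not at $\mu_k$ as in your first bullet. The bound $\delta/(1-\delta)$ also holds at $\mu^\star$, and the paper uses it in that form, but you should state it.
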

The preceding theorem connects standard interior point theory with the cross-evaluation bound essential to our acceleration framework. In \Cref{appendix-simplex}, we detail \Cref{alg:two_phase_pfm}, an adaptation of the path-following algorithm from \citep{nesterov1994interior} tailored specifically to the simplex domain. The following corollary establishes a strict upper bound on the total number of iterations required by our inner loop algorithm.

\begin{corollary}
Assume $\Lambda = \Delta_J$ and $B_{\Lambda}$ to be negative Burg entropy. Suppose $D=\| w_{\star} - w_1\|_2,L \geq 1$. Let the parameters be set according to \Cref{eq-perfect-acceleration,eq-step-sizes} with $\beta=1/4$ and  $\epsilon_t :=L/t^4$. Under \Cref{Assumption-linear-convex,ass:init}, the total number of inner loop iterations $N_{\text{Inner}}$ in \Cref{alg:two_phase_pfm} satisfies
\begin{align*}
N_{\text{Inner}} = \mathcal{O} \left( G^2 + \sqrt{J} \log \Big( J G t \Big) +   \frac{G^2 }{L} +  2 G (\sqrt{D^2 + \log(t)} + S_\star  \right)
\end{align*}
\end{corollary}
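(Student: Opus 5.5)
The count splits into two phases of \Cref{alg:two_phase_pfm}: Phase I (centering) produces an initial $\mu$ with $\delta(\mu,F_{\tau_0})\le\beta^2$ for some starting $\tau_0\ge1$. Phase II (path following) increases $\tau$ geometrically, $\tau_{k+1}=(1+c/\sqrt{\nu})\tau_k$, with one Newton step per increase. The target $\tau_{\mathrm{final}}$ comes from \Cref{thm-inner-problem-core} with $\beta=1/4$ and $\nu=J$ (Burg entropy on $\Delta_J$). Requiring the right-hand side to be at most $\epsilon_t=L/t^4$ gives two conditions. The barrier terms $(\nu+\sqrt{\nu}/3+\omega_\star(1/4))/\tau$ need $\tau\gtrsim Jt^4/L$. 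The last term $\tfrac{1}{3}\cdot 2G\alpha_t/\sqrt{\tau A_t}$ needs $\tau\gtrsim G^2\alpha_t^2/(A_t\epsilon_t^2)$; since $\alpha_t\asymp t/L$ and $A_t\asymp t^2/L$, this is $\tau\gtrsim G^2t^8/L^3$. Standard short-step path-following analysis \citep{nesterov1994interior} keeps $\delta\le\beta^2$ after each update, so Phase II costs $\mathcal{O}(\sqrt{J}\log(\tau_{\mathrm{final}}/\tau_0))=\mathcal{O}(\sqrt{J}\log(JGt))$, using $L\ge1$ to absorb $L$ into the logarithm. This accounts for the $\sqrt{J}\log(JGt)$ term.

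Phase I is the damped-Newton phase on $F_{\tau_0}$ started from the barycenter $\frac1J\mathbf 1$. Each damped step decreases $F_{\tau_0}$ by at least $\omega(\beta^2)$, a constant. The number of steps is therefore $\mathcal{O}\big(F_{\tau_0}(\tfrac1J\mathbf1)-\min F_{\tau_0}\big)$. I would bound this gap term by term:
\begin{itemize}
\item Barrier part: $B_\Lambda(\tfrac1J\mathbf1)-B_\Lambda(\lambda^\star_{\tau_0})$ is at most the barycenter's barrier gap. This is controlled by the $\nu$-self-concordant property combined with the curvature $\tau_0 S_t$ from the quadratic; the contribution is absorbed into the $\sqrt{J}\log$ term.
\item Quadratic part of $S_t$: at most $\frac{\alpha_t^2}{2A_t}G^2$, using $\|\nabla f\,\lambda\|\le G$. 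This is $\mathcal{O}(G^2/L)$ and gives the $G^2/L$ term.
\item Entropy regularizer $R$: range at most $\log J$, also absorbed.
\item Linear part: $\max_j f_j(w_t^{md})-\min_j f_j(w_t^{md})$. This term is the main obstacle.
\end{itemize}
The remaining $G^2$ term would come from choosing $\tau_0$ of order $1$ and from the compatibility constants of $R$ with $B_\Lambda$.

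The spread term cannot be bounded uniformly, since the iterate can be far from $w_\star$. I would control it through \Cref{ass:init} and $G$-Lipschitzness. For each $j$, $|f_j(w_t^{md})-f_j(w_\star)|\le G\|w_t^{md}-w_\star\|_2$, because the gradient of $w\mapsto f(w)\lambda$ is bounded by $G$ at vertices $\lambda=e_j$. Hence
\[ \max_{i,j}|f_i(w_t^{md})-f_j(w_t^{md})|\le S_\star+2G\|w_t^{md}-w_\star\|_2 . \]
It remains to bound the distance. The potential argument behind the accelerated theorem (\Cref{appendix-proof-accelerated-with-noise}) gives $\|w_t-w_\star\|^2\le D^2+2\sum_{s<t}A_s\epsilon_s$. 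With $\epsilon_s=L/s^4$ and $A_s\asymp s^2/L$, the sum is $\sum_s 1/s^2$-like; crudely bounding it by harmonic-type terms gives $\mathcal{O}(\log t)$. The same bound transfers to $w_t^{md}$ as a convex combination of $w_t$ and $w_t^{ag}$, the latter itself an average of the $w_s$. This yields the term $2G(\sqrt{D^2+\log t}+S_\star)$.

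Adding Phase I and Phase II costs gives the stated $N_{\text{Inner}}$. The delicate step is the distance bound in the noisy potential, which must be extracted from the proof of the accelerated theorem rather than from its statement.
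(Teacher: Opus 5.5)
Your overall architecture is the paper's, and most of it goes through. Phase II is counted by the geometric growth of $\tau$ up to the level required by \Cref{thm-inner-problem-core}; your explicit $\tau_{\mathrm{final}}\gtrsim\max\{Jt^4/L,\;G^2t^8/L^3\}$ is more careful than the paper's one-line claim. Phase I is counted by the constant damped-Newton decrease against the initial gap:
\begin{itemize}
\item the quadratic part is at most $G^2\alpha_t^2/(2A_t)=\mathcal{O}(G^2/L)$;
\item the linear part is bounded via $S_\star+2G\|w_t^{md}-w_\star\|_2$, using Lipschitzness of each $f_j$;
\item the distance comes from the summed noisy potential (using that $w_\star$ is a primal minimizer), transferred to $w_t^{md}$ by convexity.
\end{itemize}

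\textbf{Gap: the Phase I starting point.} You start at $\frac1J\mathbf{1}$ and bound the regularizer contribution by ``range at most $\log J$''. That bound is specific to the negative Shannon entropy, but the corollary allows any $R$ compatible with $B_\Lambda$. For example, $R(\lambda)=c^\top\lambda$ is compatible, and then $\tau_0\bigl(R(\tfrac1J\mathbf{1})-R(\mu^\star)\bigr)$ can be of order $\max_j c_j-\min_j c_j$. That quantity does not appear in the claimed bound, so the barycenter start cannot deliver the statement.

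The missing idea is the initialization actually used in \Cref{alg:two_phase_pfm}: $\tau_0=1$ and $\mu_0\in\arg\min_{\Delta_J}(R+B_\Lambda)$. This gives $(R+B_\Lambda)(\mu_0)-(R+B_\Lambda)(\mu_1^\star)\le 0$. It removes the barrier and regularizer from the gap at once, leaving only $S_t(\mu_0)-S_t(\mu_1^\star)$, which you already bound correctly.

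Relatedly, your justification of the barrier term (self-concordance plus the curvature of $\tau_0 S_t$) is not an argument. The correct reason is that the barycenter minimizes $-\sum_j\log\mu_j$ on $\Delta_J$, by AM--GM, so that term is $\le 0$. For the Shannon entropy the regularizer term is also $\le 0$, since the barycenter minimizes it too.

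\textbf{Two smaller points.}
\begin{itemize}
\item The $G^2$ term needs no derivation: it is additive slack in an upper bound. There are no ``compatibility constants'' that would produce it.
\item With the step factor $1+\frac{1}{15\sqrt J}$ of \Cref{alg:two_phase_pfm}, \Cref{thm-phase2} maintains $\delta\le 1/4$, not $\delta\le\beta^2=1/16$. You would therefore apply \Cref{thm-inner-problem-core} with $\beta=1/2$, which only changes constants.
\end{itemize}
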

The proof and further theoretical details are deferred to \Cref{proof-kkt-upper-bound}.

\section{Implementation}
\label{sec-complexity}
Before entering the simulation, we estimate the overall computational complexity of our algorithm. During the inner loop optimization over $\lambda$, the Jacobian of $f$ evaluated at $w_t^{md}$ remains fixed. Consequently, each inner loop requires exactly one evaluation of $\nabla f$ and at most $N_{\text{inner}}$ evaluations of the penalty derivatives $(\nabla R, \nabla^2 R)$. Unlike standard first-order methods, our second-order inner loop incurs the overhead of computing and inverting a $J \times J$ Hessian, adding $\mathcal{O}(dJ^2 + J^3 N_{\text{inner}})$ to the per-iteration cost. Factoring in the total iterations required to reach an $\epsilon$-approximate saddle point, we summarize the overall complexities in \Cref{tab:complexity} for a rigorous comparison with existing baselines.

\begin{table}[h]
\centering
\begin{tabular}{lccc}
\toprule
\textbf{Algorithms} & \textbf{Call} $\nabla f$ & \textbf{Call} $(\nabla R, \nabla^2 R)$ & \textbf{Computational cost} \\
\midrule
\citet{Rakhlin13} & $\mathcal{O}\left(\frac{L}{\epsilon}\right)$ & $\mathcal{O}\left(\frac{L}{\epsilon}\right)$ & $\mathcal{O}\left(\frac{L}{\epsilon} dJ\right)$ \\
\citet{Ouyang2018LowerCB} & $\mathcal{O}\left(\sqrt{\frac{L}{\gamma \epsilon}}\right)$ & $\mathcal{O}\left(\sqrt{\frac{L}{\gamma \epsilon}}\right)$ & $\mathcal{O}\left(\sqrt{\frac{L}{\gamma \epsilon}} dJ\right)$ \\
\textbf{Our methods} & $\mathcal{O}\left(\sqrt{\frac{L}{\epsilon}}\right)$ & $\tilde{\mathcal{O}}\left(\sqrt{\frac{L}{\epsilon}}\right)$ & $\tilde{\mathcal{O}}\left(\sqrt{\frac{L}{\epsilon}} \left(dJ^2 + J^{3.5}\right)\right)$ \\
\bottomrule
\end{tabular}
\caption{Overall comparison of various algorithms for the saddle point problem to reach precision $\epsilon$. The $\tilde{\mathcal{O}}$ notation hides additional multiplicative logarithmic factors.}
\label{tab:complexity}
\end{table}

\textbf{Synthetic Logistic Regression} To empirically validate our theoretical guarantees, we evaluate the proposed algorithm on a synthetic group-structured classification task. Specifically, we consider a regularized minimax logistic regression problem. Suppose we have $J$ demographic groups, where each group $j \in \{1, \dots, J\}$ contains a dataset of $N = 5,000$ samples $(X_{j,i}, y_{j,i}) \in \mathbb{R}^d \times \{-1, 1\}$. The component loss $f_j(w)$ is defined as the empirical logistic risk for group $j$:
\begin{align*}
f_j(w) = \frac{1}{N} \sum_{i=1}^{N} \log\left(1 + \exp\left(-y_{j,i} X_{j,i}^\top w\right)\right) \\
\min_{w \in \mathbb{R}^d} \max_{\lambda \in \Delta_J} \left\{ \sum_{j=1}^J \lambda_j f_j(w) - \frac{\gamma}{2} \left\Vert{}\lambda - \frac{1}{J}\mathbf{1}\right\Vert{}_2^2 \right\}.
\end{align*}
where $\Delta_J$ is the simplex and $\gamma \ge 0$ dictates the strong concavity of the regularizer. Our primary evaluation metrics are the iteration count and the GPU wall-clock time required to achieve an $\epsilon$-suboptimal primal gap (full simulation details are provided in \Cref{sec-appendix-simu}). We benchmark our approach against established first-order methods—Accelerated Proximal Gradient Descent (APGD) and Extragradient (EG)—as well as higher-order solvers like L-BFGS-B. The evaluation spans three main axes, with overall results summarized in \Cref{tab:all_experiments}:

\textbf{1. Robustness to the regularization parameter ($\gamma$).} A key theoretical advantage of our method is its convergence guarantees even in the absence of strong concavity ($\gamma = 0$). Empirically, while APGD degrades significantly for small values of $\gamma$, our method and EG remain robust.
    
\textbf{2. Accelerated high-precision convergence.} Compared to standard first-order approaches, the primary benefit of our accelerated algorithm is its ability to reach high-precision solutions in a fraction of the time.
    
\textbf{3. Scaling with dimensions ($d$ and $J$).} Although our higher-order implementation lacks low-level optimization, we compare its computational efficiency against SciPy's L-BFGS-B. Our algorithm exhibits superior scaling with respect to the primal dimension $d$, which offsets its theoretical complexity disadvantage in the dual dimension $J$.

\begin{table}[htbp]
\centering
\label{tab:all_experiments}

\vspace{0.2cm}
\textbf{(a) Convergence behavior of our algorithm (green) compared to APGD (orange) and EG (black). The left pane plots the primal gap against the iteration count, while the right pane plots it against computational wall-clock time.} ($d=5000, J=10$) \\
\vspace{0.1cm}
\includegraphics[width=\linewidth]{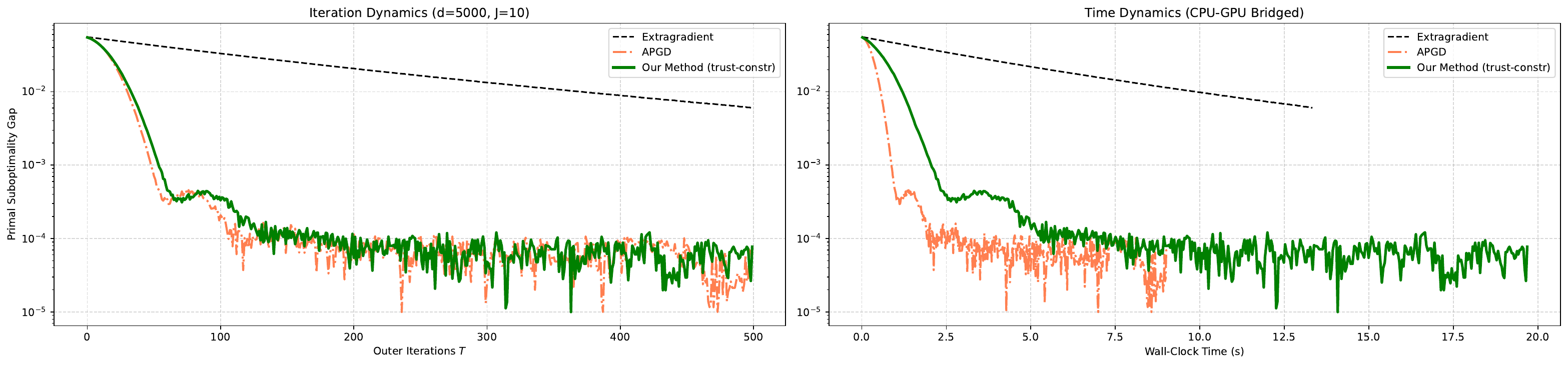}

\vspace{0.6cm} 

\textbf{(b) Time to reach precision $\epsilon=10^{-2}$} ($d=5000, J=10$) \\
\vspace{0.1cm}
\begin{tabular}{lccccc}
\toprule
\textbf{Algorithm} & $\gamma=0$ & $\gamma=0.001$ & $\gamma=0.01$ & $\gamma=0.1$ & $\gamma=1$ \\
\midrule
\textbf{Ours} & 34 (1.16s) & 35 (1.46s) & 34 (1.22s) & 34 (1.24s) & 34 (1.20s) \\
\textbf{APGD} & N/A & 88 (1.59s) & 32 (0.59s) & 20 (0.36s) & 18 (0.32s) \\
\textbf{EG}   & 210 (5.48s) & 219 (5.92s) & 372 (9.90s) & -/- & -/- \\
\bottomrule
\end{tabular}

\vspace{0.6cm} 

\textbf{(c) Time to reach precision $\epsilon$} ($d=5000, J=30$) \\
\vspace{0.1cm}
\begin{tabular}{lccc}
\toprule
\textbf{Algorithm} & $\epsilon=10^{-1}$ & $\epsilon=10^{-2}$ & $\epsilon=10^{-3}$ \\
\midrule
  \textbf{Ours} & 1 (0.02s) & 34 (1.42s) & 55 (2.38s) \\
  \textbf{APGD} & 1 (0.02s) & 32 (0.57s) & 49 (0.88s) \\
  \textbf{EG}   & 1 (0.03s) & 372 (9.88s) & -/- \\
\bottomrule
\end{tabular}

\vspace{0.6cm} 

\textbf{(d) Complexity scaling in $d$} ($J=10, \gamma =10^{-2}, T=200$) \\
\vspace{0.1cm}
\begin{tabular}{lcccc}
\toprule
& \multicolumn{2}{c}{\textbf{Ours}} & \multicolumn{2}{c}{\textbf{L-BFGS-B}} \\
\cmidrule(lr){2-3} \cmidrule(lr){4-5}
\textbf{Dimension $d$} & Obj. & Time (s) & Obj. & Time (s) \\
\midrule
$1000$ & 0.675157 & 5.64 & 0.675161 & 0.13 \\
$2000$ & 0.670539 & 6.74 & 0.670589 & 0.42 \\
$5000$ & 0.637399 & 7.71 & 0.637422 & 1.07 \\
\bottomrule
\end{tabular}
\vspace{0.6cm} 

\textbf{(e) Complexity scaling in $J$} ($d=5000, \gamma =10^{-2},T=200$) \\
\vspace{0.1cm}
\begin{tabular}{lcccc}
\toprule
& \multicolumn{2}{c}{\textbf{Ours}} & \multicolumn{2}{c}{\textbf{L-BFGS-B}} \\
\cmidrule(lr){2-3} \cmidrule(lr){4-5}
\textbf{Dimension $J$} & Obj. & Time (s) & Obj. & Time (s) \\
\midrule
$4$ & 0.545715 & 5.28 & 0.545861 & 0.36 \\
$8$ & 0.622306 & 6.50 & 0.622308 & 0.63 \\
$16$ & 0.658493 & 9.81 & 0.658977 & 1.67 \\
$32$ & 0.676453 & 16.84 & 0.676627 & 1.08 \\
\bottomrule
\end{tabular}
\caption{Summary of experimental results across all settings.}
\end{table}
\newpage
\subsection*{AI use statement}
In this work, we used generative AI tools to help develop to provide critical ingredients for proving mathematical claims, to assist in the writing of proofs, generate synthetic data sets and to assist with translation. 

Additionally, we used generative AI tools to transcribe recordings of research material, summarize or analyse existing literature, edit a research paper to improve readability. We have reviewed all AI-assisted work. LLM-generated code was verified and tested for correctness. We take responsibility for the final content of this work, including text, claims or artifacts produced with the aid of generative AI.

\subsection*{Reproducibility statement}
To ensure the full reproducibility of our work, we provide an in-depth breakdown of the synthetic data generation and experimental configurations. On the theoretical side, rigorous and complete proofs supporting all claims, theorems, and bounds are detailed in the Appendix.

\bibliography{ref}
\bibliographystyle{iclr2027_conference}

\appendix

\section{Complementary materials}
\begin{lemma}  \label{lem:prox}
Let $f$ be a convex function and $\nu$ be convew and continuously differentiable. For \[ u^{\star} = \arg\min_{u\in {\cal X}} \{f(u) + D^{\nu}(u,y)\}, \]
and all $u$, it holds
\[f(u^{\star}) + D^{\nu}(u^\star,y) + D^{f}(u,u^\star) \leq f(u) +  D^{\nu}(u,y) - D^{\nu}(u,u^\star). \]
\end{lemma}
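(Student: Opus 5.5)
The proof combines the first-order optimality condition of the prox problem with the three-point identity for Bregman divergences. Write $\phi(u) := f(u) + D^{\nu}(u,y)$, which is convex in $u$ because $D^{\nu}(\cdot,y)$ equals $\nu$ minus an affine function. Since $u^\star$ minimizes $\phi$ over ${\cal X}$, there is a subgradient $g \in \partial f(u^\star)$ such that
\[
\langle g + \nabla \nu(u^\star) - \nabla \nu(y),\, u - u^\star\rangle \ge 0 \quad \text{for all } u \in {\cal X}.
\]
This is the variational inequality for constrained convex minimization; the normal-cone term is absorbed into the inequality. We use this particular $g$ to define $D^{f}(u,u^\star) = f(u) - f(u^\star) - \langle g, u-u^\star\rangle$. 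If $f$ is differentiable, $g = \nabla f(u^\star)$ and this is the usual Bregman divergence.

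\textbf{Step 1 (three-point identity for $\nu$).} Direct expansion of the definitions gives
\[
D^{\nu}(u,y) - D^{\nu}(u,u^\star) - D^{\nu}(u^\star,y) = \langle \nabla\nu(u^\star) - \nabla\nu(y),\, u - u^\star\rangle .
\]
The $\nu(u)$, $\nu(u^\star)$ and $\nu(y)$ terms cancel, and the linear terms regroup into this inner product.

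\textbf{Step 2 (rewriting the target).} By the definition of $D^f$,
\[
f(u) - f(u^\star) - D^{f}(u,u^\star) = \langle g, u-u^\star\rangle .
\]
So the claimed inequality, after moving everything to one side, reads
\[
\langle g, u-u^\star\rangle + D^{\nu}(u,y) - D^{\nu}(u,u^\star) - D^{\nu}(u^\star,y) \ge 0 .
\]
By Step 1 this is exactly $\langle g + \nabla\nu(u^\star) - \nabla\nu(y),\, u-u^\star\rangle \ge 0$, which is the optimality condition above. This proves the lemma.

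The only delicate point is the nonsmooth case. There the optimality condition must supply a specific subgradient $g$, which requires a constraint qualification such as a nonempty relative interior of ${\cal X}$ intersected with the domain of $f$. Moreover, $D^f$ must be interpreted with that same $g$. In the paper's applications $f$ is either linear or smooth, so $D^f$ is unambiguous, and for linear $f$ it is identically zero. I would state this convention explicitly. The remaining algebra is routine.
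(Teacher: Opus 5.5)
Your proof is correct and follows essentially the same route as the paper: you combine the first-order optimality condition for $u^\star$ with the three-point identity for $D^{\nu}$, and the definition of $D^{f}$ turns $f(u)-f(u^\star)-D^{f}(u,u^\star)$ into $\langle \nabla f(u^\star), u-u^\star\rangle$. Your added remarks are a sensible tightening of what the paper leaves implicit: fixing the subgradient $g$ that defines $D^{f}$ in the nonsmooth case, noting the constraint qualification this needs, and restricting to $u\in{\cal X}$.
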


\begin{proof}
As $u^\star$ is a minimizer, it fulfills for all  $u\in{\cal X}$ the first-order optimality condition
\begin{align}
\langle \nabla f(u^\star) + \nabla D^{\nu}(u^{\star},y) , u - u^{\star}\rangle \geq 0,
\end{align}
 with the gradient taken with respect to the first entry. Applying the three-points identity \cite{CT93}, yields 
\[\langle \nabla D^{\nu}(u^{\star},y) , u - u^{\star}\rangle = D^{\nu}(u,y)- D^{\nu}(u,u^{\star})- D^{\nu}(u^{\star},y),\]
leading to
\begin{align*}
  f(u) - f(u^{\star}) - D^{f}(u,u^{\star}) & = \langle \nabla f(u^\star) , u - u^{\star}\rangle \geq D^{\nu}(u,u^{\star})- D^{\nu}(u,y)  +D^{\nu}(u^{\star},y).
\end{align*}
\end{proof}

\begin{lemma}[Shannon Entropy]
\label{lemma-entropy}
If $R$ is the negative Shannon entropy and $B$ is the negative Burg entropy, for all $c \geq 0$, $cR+B$ is $1$-self-concordant. 
\end{lemma}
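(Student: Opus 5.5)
We need to show that $F_c(\lambda) := cR(\lambda) + B(\lambda) = \sum_{j=1}^J \bigl(c\lambda_j\log\lambda_j - \log\lambda_j\bigr)$ is convex and satisfies the inequality of \Cref{def:self_concordant} on the open positive orthant (and hence on $\mathrm{relint}(\Delta_J)$, since restricting to an affine subspace preserves self-concordance). The function is separable, so the plan has two parts.

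\textbf{Reduction to one dimension.} Write $F_c(\lambda)=\sum_j \phi(\lambda_j)$ with $\phi(x)=cx\log x-\log x$ on $x>0$. For a direction $h$,
\[
\nabla^2F_c(\lambda)[h,h]=\sum_j \phi''(\lambda_j)h_j^2, \qquad \nabla^3F_c(\lambda)[h,h,h]=\sum_j \phi'''(\lambda_j)h_j^3 .
\]
Suppose each $\phi$ satisfies $\phi''>0$ and $|\phi'''(x)|\le 2\phi''(x)^{3/2}$. Put $a_j=\phi''(\lambda_j)^{1/2}|h_j|$. Then
\[
|\nabla^3F_c(\lambda)[h,h,h]|\le 2\sum_j a_j^3\le 2\Bigl(\sum_j a_j^2\Bigr)^{3/2},
\]
where the last step uses $\|a\|_3\le\|a\|_2$. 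This is exactly the required inequality, and convexity of $F_c$ follows from $\phi''>0$.

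\textbf{The scalar inequality.} The derivatives are
\[
\phi''(x)=\frac{c}{x}+\frac{1}{x^2}=\frac{cx+1}{x^2}>0, \qquad \phi'''(x)=-\frac{c}{x^2}-\frac{2}{x^3}=-\frac{cx+2}{x^3}.
\]
We need $(cx+2)/x^3\le 2(cx+1)^{3/2}/x^3$, i.e.\ $cx+2\le 2(1+cx)^{3/2}$. Setting $u=cx\ge0$, this reads $u+2\le 2(1+u)^{3/2}$. Since $(1+u)^{3/2}\ge 1+u$, we get $2(1+u)^{3/2}\ge 2+2u\ge u+2$, so the inequality holds with room to spare.

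\textbf{Main difficulty.} The only potential obstacle is the restriction to the simplex: $F_c$ must be self-concordant on the relative interior, where the Hessian of the restricted function need not be positive definite in ambient coordinates. Self-concordance is preserved under composition with affine maps, because the directional derivatives along directions in the affine hull are the same as those computed above. So this step is immediate, and the scalar bound is the crux of the proof.
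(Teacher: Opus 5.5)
Your proof is correct and follows the paper's route: compute $\phi''$ and $\phi'''$ for the scalar function $c x\log x-\log x$ and reduce the self-concordance inequality to $u+2\le 2(1+u)^{3/2}$ with $u=cx\ge 0$. Two parts go beyond the paper. Your one-line justification via $(1+u)^{3/2}\ge 1+u$ replaces the paper's derivative-comparison argument. Your reduction from the separable sum to the scalar case (via $\|a\|_3\le\|a\|_2$), together with the restriction remark, fills in steps the paper leaves implicit.
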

\begin{proof}
Let $c \ge 0$, the sum is:
\[g(\mu) = c \mu \ln \mu - \ln \mu\]
We calculate the derivatives 
\begin{align*}
g''(\mu) & = \frac{c}{\mu} + \frac{1}{\mu^2} = \frac{c\mu + 1}{\mu^2} \\
g'''(\mu) & = -\frac{c}{\mu^2} - \frac{2}{\mu^3} = -\frac{c\mu + 2}{\mu^3}
\end{align*}
Now, the $1$-self-concordance ($\vert{}g'''(\mu)\vert{} \le 2(g''(\mu))^{3/2}$) can be rewritten as :
\[\frac{c\mu + 2}{\mu^3} \le 2 \left( \frac{c\mu + 1}{\mu^2} \right)^{3/2} = \frac{2(c\mu + 1)^{3/2}}{\mu^3}\]
Since $\mu > 0$, we consider $y := c\mu \ge 0$ and multiply both sides by $\mu^3$
\begin{align*}
c\mu + 2 & \le 2(c\mu + 1)^{3/2} \\
\Longleftrightarrow \quad y + 2 & \le 2(y + 1)^{3/2}.
\end{align*}The inequality holds for $y=0$. The left side grows at a constant rate of $1$. The right side grows at a rate of $3(y+1)^{1/2} \ge 3$. Because the right side grows faster than the left side, the inequality is strictly and globally true for all $y \ge 0$.
\end{proof}

\section{Step-sizes}
\label{appendix-step-size}
We recall that the Nesterov accelerated gradient scheme:
\begin{equation*}
\begin{aligned}
\begin{cases}
 A_0 = 0 \\
\forall t \geq 1, \quad  \alpha_t = \frac{1 + \sqrt{1 + 4L A_{t-1}}}{4L} \\
\forall t \geq 1, \quad  A_t = A_{t-1} + \alpha_t \\
\end{cases}
\end{aligned}
\end{equation*}

\begin{lemma}
\label{lem-step-size}
We have for all $t\geq 1$
\begin{align*}
2 L \alpha_t^2 &  \leq  A_t  \\
\frac{t^2 + 4t}{16L} <  A_t & \leq \frac{(t+2)^2}{16L} + \frac{(t+2)\ln(t+1)}{4L} + \frac{\ln^2(t+1)}{4L}
\end{align*}
\end{lemma}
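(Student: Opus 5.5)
Write $\alpha_t=\frac{1+\sqrt{1+4LA_{t-1}}}{4L}$. By construction $\alpha_t$ is the positive root of $2L\alpha^2-\alpha-A_{t-1}=0$. To check this, note that $4L\alpha_t-1=\sqrt{1+4LA_{t-1}}$; squaring gives $16L^2\alpha_t^2-8L\alpha_t=4LA_{t-1}$, which is $2L\alpha_t^2=\alpha_t+A_{t-1}=A_t$. The first claim therefore holds with equality, $2L\alpha_t^2=A_t$.

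For the two-sided bounds, I pass to $a_t:=\sqrt{A_t}$, since the recursion becomes nearly linear in $\sqrt{A_t}$. The identity $A_t=2L\alpha_t^2$ gives $\alpha_t=\sqrt{A_t/(2L)}$. The recursion $A_t-A_{t-1}=\alpha_t$ is then awkward, so I instead work directly with $\alpha_t$. From $\alpha_t=\frac{1}{4L}+\sqrt{\frac{1}{16L^2}+\frac{A_{t-1}}{4L}}$ and $A_{t-1}=2L\alpha_{t-1}^2$ (for $t\ge2$), we get
\[\alpha_t=\frac{1}{4L}+\sqrt{\frac{1}{16L^2}+\frac{\alpha_{t-1}^2}{2}},\]
with $\alpha_1=\frac{1}{2L}$. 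This recursion is not of the form $\alpha_t\approx\alpha_{t-1}+c$, and $\alpha_t$ grows linearly only if the coefficient is $1$, not $1/\sqrt2$. So I would recheck the definition. Since $A_t=A_{t-1}+\alpha_t$ and $2L\alpha_t^2=A_t$, the correct recursion is $2L\alpha_t^2=A_{t-1}+\alpha_t$ with $A_{t-1}=\sum_{s<t}\alpha_s$, and then $\alpha_t\sim t/(4L)$ and $A_t\sim t^2/(8L)$.

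This conflicts with the claimed constant $16L$. I would resolve it by a direct induction on $\sqrt{A_t}$. Writing $u_t=\sqrt{2LA_t}$, the identity gives $\alpha_t=u_t/(2L)$, and hence
\[u_t^2-u_{t-1}^2=2L\alpha_t=u_t,\]
which yields $u_t=\tfrac12+\sqrt{\tfrac14+u_{t-1}^2}$. From $u_{t-1}<\sqrt{\tfrac14+u_{t-1}^2}\le u_{t-1}+\tfrac12$, I get $u_{t-1}+\tfrac12<u_t\le u_{t-1}+1$. The sharper form needed for the logarithmic correction is
\[u_t=u_{t-1}+1+\Big(\sqrt{\tfrac14+u_{t-1}^2}-u_{t-1}-\tfrac12\Big),\]
where the parenthesized term lies in $[-\tfrac12,0]$ and is about $-\tfrac{1}{8u_{t-1}}$. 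Its sum behaves like $\ln t$, which is the source of the $\ln(t+1)$ terms. Concretely, $u_t\ge u_{t-1}+1-\frac{1}{2(2u_{t-1}+1)}$ for the lower direction, and $u_t\le u_{t-1}+1$ for the upper.

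The plan is then as follows. First, show by induction the linear lower bound $u_t\ge (t+c)/2$, using $u_t>u_{t-1}+\tfrac12$ and $u_1=1$ (since $A_1=\alpha_1=1/(2L)$). Next, feed this back in to get $u_t\ge t-\sum_s O(1/s)$ and an upper bound of the form $u_t\le \tfrac{t+2}{2}\cdot\kappa+\tfrac{\ln(t+1)}{\cdot}$. Finally, square and divide by $2L$ to obtain bounds of the shape $\frac{(t+2)^2}{16L}+\frac{(t+2)\ln(t+1)}{4L}+\frac{\ln^2(t+1)}{4L}$, which is $\frac{1}{16L}\big((t+2)+2\ln(t+1)\big)^2$, so the target upper bound is $u_t\le \frac{(t+2)+2\ln(t+1)}{2\sqrt2}$, and a matching lower bound $u_t>\sqrt{(t^2+4t)/8}$. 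I would derive these by telescoping $u_t^2-u_{t-1}^2=u_t$ and sandwiching $u_t$ with the linear bounds. Concretely, summing gives $u_t^2=u_0^2+\sum_{s\le t}u_s$, and inserting $u_s\ge\frac{s+1}{2}$ gives quadratic growth. The harmonic sum $\sum_s 1/(s+1)\le\ln(t+1)$ from the correction term produces the logarithm.

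The main obstacle is matching the exact constants ($16L$ versus the $8L$ that the naive asymptotics suggest). I would first recompute $\alpha_1,A_1,A_2$ numerically against both claimed bounds to pin down the normalization. If a discrepancy appears, I would adjust the induction hypothesis, for example proving $\frac{t+2}{4}\le u_t/\sqrt2$ style bounds, until the base case and the inductive step $u_t\le u_{t-1}+1$ or $u_t\ge u_{t-1}+\tfrac12$ close with the stated constants.
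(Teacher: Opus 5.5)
Your first step contains an algebra error that invalidates everything after it. Dividing $16L^2\alpha_t^2-8L\alpha_t=4LA_{t-1}$ by $4L$ gives $4L\alpha_t^2-2\alpha_t=A_{t-1}$, i.e.\ $4L\alpha_t^2=A_t+\alpha_t$. It does not give $2L\alpha_t^2=\alpha_t+A_{t-1}$. The positive root of your quadratic $2L\alpha^2-\alpha-A_{t-1}=0$ would be $\frac{1+\sqrt{1+8LA_{t-1}}}{4L}$, which is not the paper's $\alpha_t$. Concretely, $\alpha_2=\frac{1+\sqrt3}{4L}$ gives $2L\alpha_2^2=\frac{2+\sqrt3}{4L}$, whereas $A_2=\frac{3+\sqrt3}{4L}$. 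In general $A_t-2L\alpha_t^2=A_{t-1}/2$.

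The first claim is still true, but only as an inequality, via $4L\alpha_t^2=A_t+\alpha_t\le 2A_t$. Your own computation had already exposed the mistake. Substituting $A_{t-1}=2L\alpha_{t-1}^2$ into the definition yields a contraction with fixed point $\alpha=1/L$. So $\alpha_t$ would stay bounded while $A_t\ge t/(2L)$ diverges, contradicting $A_t=2L\alpha_t^2$. The culprit was the identity, not the definition.

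As a result, the recursion $u_t^2-u_{t-1}^2=u_t$ for $u_t=\sqrt{2LA_t}$ does not describe the actual sequence; it already fails at $t=2$. The $8L$ versus $16L$ ``conflict'' is therefore self-inflicted. Worse, your model gives $u_t>u_{t-1}+\tfrac12$, hence $A_t\ge\frac{(t+1)^2}{8L}$. This exceeds the stated upper bound $\frac{((t+2)+2\ln(t+1))^2}{16L}$ for large $t$ (e.g.\ $t=20$). So no readjustment of induction hypotheses can close the argument along this route, and the proposal indeed ends without a proof. Even inside the model there are slips: the bracketed correction tends to $-\tfrac12$, not $-\frac{1}{8u_{t-1}}$, and $u_t\ge u_{t-1}+1-\frac{1}{2(2u_{t-1}+1)}$ is false, e.g.\ at $u_{t-1}=1$.

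The repair is to complete the square in the correct identity: $(2L\alpha_t-\tfrac12)^2=LA_{t-1}+\tfrac14$. With $x_t:=\sqrt{LA_t+\tfrac14}$ and $x_0=\tfrac12$, one has $L\alpha_t=\tfrac14+\tfrac12x_{t-1}$ and
\[ x_t^2=x_{t-1}^2+\tfrac12x_{t-1}+\tfrac14 . \]
For the lower bound, $x_t^2>(x_{t-1}+\tfrac14)^2$ gives $x_t>\frac{t+2}{4}$, which yields $LA_t>\frac{t^2+4t}{16}$. For the upper bound, $x_t-x_{t-1}=\frac{x_{t-1}/2+1/4}{x_t+x_{t-1}}<\tfrac14+\frac{1}{8x_{t-1}}\le\tfrac14+\frac{1}{2(t+1)}$. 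Combined with $\sum_{k=1}^t\frac{1}{k+1}\le\ln(t+1)$, this gives $x_t\le\frac{t+2}{4}+\tfrac12\ln(t+1)$. Squaring, and using $LA_t\le x_t^2$, gives exactly the stated upper bound. Your instinct---a square-root variable whose increments are a constant plus an $O(1/t)$ term, summed via the harmonic series---is exactly the paper's mechanism. It just has to be run on the correct recursion.
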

\begin{proof}
By the definition of $\alpha_t$,
\begin{align*}
4L \alpha_t - 1 & = \sqrt{1 + 4LA_{t-1}} \\
\implies (4L \alpha_t - 1)^2 & = 1 + 4LA_{t-1} \\
\implies 16L^2 \alpha_t^2 - 8L \alpha_t + 1 & = 1 + 4LA_{t-1}.
\end{align*}
After rearranging the terms, we have:
\begin{align*}
4L \alpha_t^2 - 2\alpha_t & = A_{t-1} = A_t - \alpha_t \\
\implies 4L \alpha_t^2 & = A_t + \alpha_t.
\end{align*}
\textbf{For the first inequality}, we obtain
\begin{align*}
\frac{L\alpha_t^2}{A_t} = \frac{A_t + \alpha_t}{4A_t} = \frac{1}{4} \left( 1 + \frac{\alpha_t}{A_t} \right) \le \frac{1}{4} (1 + 1) = \frac{2}{4} = \frac{1}{2}
\end{align*}
which yields the first inequality. Now continue to develop
\begin{align*}
4L \alpha_t^2 - 2\alpha_t & = A_{t-1} \\
\implies 4L^2 \alpha_t^2 - 2L\alpha_t + \frac{1}{4} & = LA_{t-1} + \frac{1}{4} \\
\implies \left( 2L \alpha_t - \frac{1}{2} \right)^2 & = LA_{t-1} + \frac{1}{4}
\end{align*}
Now, we take the square root and rearrange the terms
\begin{align*}
2L \alpha_t - \frac{1}{2} & = \sqrt{LA_{t-1} + \frac{1}{4}} \\
\implies L \alpha_t & = \frac{1}{4} + \frac{1}{2}\sqrt{LA_{t-1} + \frac{1}{4}} \\
\implies LA_t + \frac{1}{4} & = \left( LA_{t-1} + \frac{1}{4} \right) + \frac{1}{2}\sqrt{LA_{t-1} + \frac{1}{4}} + \frac{1}{4}
\end{align*}
Let us introduce $x_t = \sqrt{LA_t + \frac{1}{4}}$ with $x_0  = \frac{1}{2}$, substituting this yields :
\begin{align*}
x_t^2 = x_{t-1}^2 + \frac{1}{2}x_{t-1} + \frac{1}{4} 
\end{align*}
\textbf{For lower bound}, we notice that  for $t\geq 1$:
\begin{align*}
\left(x_{t-1} + \frac{1}{4}\right)^2 & = x_{t-1}^2 + \frac{1}{2}x_{t-1} + \frac{1}{16} \\
\implies x_t^2 & > \left(x_{t-1} + \frac{1}{4}\right)^2 \\
\implies x_t & > x_{t-1} + \frac{1}{4}
\end{align*}
By induction,
\begin{align*}
x_t > x_0 + \frac{t}{4} = \frac{t}{4} + \frac{1}{2} \\
x_t^2 > \frac{t^2}{16} + \frac{t}{4} + \frac{1}{4} \\
A_t > \frac{t^2 + 4t}{16L}
\end{align*}

\textbf{For upper bound} Since $x_t$ is increasing, we notice for $t\geq 1$
\begin{align*}
x_t - x_{t-1} = \frac{x_t^2 - x_{t-1}^2}{x_t + x_{t-1}} = \frac{\frac{1}{2}x_{t-1} + \frac{1}{4}}{x_t + x_{t-1}} < \frac{\frac{1}{2}x_{t-1} + \frac{1}{4}}{2x_{t-1}} = \frac{1}{4} + \frac{1}{8x_{t-1}} 
\end{align*}
We recall that $x_{t-1} \geq \frac{t-1}{4} + \frac{1}{2} = \frac{t+1}{4}$
\begin{align*}
x_t - x_{t-1} & \leq  \frac{1}{4} + \frac{1}{8\left(\frac{t+1}{4}\right)} = \frac{1}{4} + \frac{1}{2(t+1)} \\
\implies x_t - x_0 & \leq  \frac{t}{4} + \frac{1}{2} \sum_{k=1}^t \frac{1}{k+1}
\end{align*}
Since $\left(\sum_{k=1}^t \frac{1}{k+1} < \int_1^{t+1} \frac{du}{u} = \ln(t+1) \right)$, we get:
\begin{align*}
x_t & \leq \frac{t}{4} + \frac{1}{2} + \frac{1}{2}\ln(t+1) \\
x_t^2 & \leq \frac{(t+2)^2}{16} + \frac{(t+2)\ln(t+1)}{4} + \frac{\ln^2(t+1)}{4}\\
A_t & = \frac{x_t^2 - \frac{1}{4}}{L} \leq \frac{(t+2)^2}{16L} + \frac{(t+2)\ln(t+1)}{4L} + \frac{\ln^2(t+1)}{4L} .
\end{align*}
\end{proof}

\section{Fixed point Theorem}
\label{appendix-proof-fixed-point}

\begin{theorem}[Berge's Maximum Theorem, \cite{berge1997topological} ]
\label{thm-berge}
Let $X$ and $Y$ be topological spaces. Let $f: X \times Y \to \mathbb{R}$ be a continuous function, and let $C: Y \to 2^X$ be a continuous set-valued mapping (i.e. upper and lower hemicontinuous) with non-empty compact values. Then, 
\begin{itemize}
    \item The marginal maximum value function $M: Y \to \mathbb{R}$ defined by $M(y) = \max_{x \in C(y)} f(x, y)$ is continuous.
    \item The argmax correspondence $A: Y \to 2^X$ defined by $A(y) = \{x \in C(y) \mid f(x, y) = M(y)\}$ is upper hemicontinuous with non-empty and compact values.
\end{itemize}
\end{theorem}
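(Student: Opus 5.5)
We prove the two bullet points by splitting continuity of $M$ into its two semicontinuity halves and then deducing the properties of $A$ from them. We work with nets, since $X$ and $Y$ are general topological spaces. Non-emptiness and compactness of $A(y)$ come first. For each $y$, the map $x\mapsto f(x,y)$ is continuous on the non-empty compact set $C(y)$, so it attains its maximum and $M(y)$ is well defined. The set $A(y)=C(y)\cap\{x: f(x,y)\ge M(y)\}$ is a closed subset of $C(y)$ (closed relative to $C(y)$ because $f(\cdot,y)$ is continuous), hence compact.

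\textbf{Lower semicontinuity of $M$} uses lower hemicontinuity of $C$. Fix $y_0$ and pick $x_0\in A(y_0)$. Let $y_\alpha\to y_0$ be a net. Lower hemicontinuity lets us pass to a subnet and find $x_\alpha\in C(y_\alpha)$ with $x_\alpha\to x_0$. Joint continuity of $f$ then gives $M(y_\alpha)\ge f(x_\alpha,y_\alpha)\to f(x_0,y_0)=M(y_0)$. Applying this to every subnet gives $\liminf M(y_\alpha)\ge M(y_0)$. An equivalent argument avoids nets: for $\varepsilon>0$, the set $U=\{x: f(x,y)>M(y_0)-\varepsilon \text{ near } y_0\}$ can be taken as a product neighbourhood of $(x_0,y_0)$, and lower hemicontinuity gives $C(y)\cap U_X\neq\emptyset$ for $y$ near $y_0$.

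\textbf{Upper semicontinuity of $M$} uses upper hemicontinuity together with compact values. Suppose it fails at $y_0$: there are $\varepsilon>0$ and a net $y_\alpha\to y_0$ with points $x_\alpha\in C(y_\alpha)$ satisfying $f(x_\alpha,y_\alpha)\ge M(y_0)+\varepsilon$. The key step, and the main technical obstacle, is to show that $(x_\alpha)$ has a cluster point in $C(y_0)$. If not, every $x\in C(y_0)$ has an open neighbourhood $V_x$ that $x_\alpha$ eventually leaves. Compactness of $C(y_0)$ gives a finite subcover $V=\bigcup V_{x_i}$, and upper hemicontinuity gives $C(y_\alpha)\subset V$ eventually. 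Since the index set is directed, for large $\alpha$ the point $x_\alpha$ has left every $V_{x_i}$, which contradicts $x_\alpha\in V$. Passing to a subnet with $x_\alpha\to \bar x\in C(y_0)$, continuity of $f$ gives $f(\bar x,y_0)\ge M(y_0)+\varepsilon$, a contradiction. The same cluster-point argument yields upper hemicontinuity of $A$, as follows.

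\textbf{Upper hemicontinuity of $A$.} Suppose an open set $W\supset A(y_0)$ exists with points $x_\alpha\in A(y_\alpha)\setminus W$ along a net $y_\alpha\to y_0$. Extract a cluster point $\bar x\in C(y_0)$ as above. Then $f(\bar x,y_0)=\lim f(x_\alpha,y_\alpha)=\lim M(y_\alpha)=M(y_0)$ by continuity of $M$, so $\bar x\in A(y_0)\subset W$. Since $W$ is open, the subnet eventually lies in $W$, which contradicts $x_\alpha\notin W$.
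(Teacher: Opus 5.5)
Your proof is correct, and it does more than the paper, which gives no proof of this statement: it cites Berge and uses the theorem as a black box inside the Kakutani argument for \Cref{thm-fixed-point}, where the correspondence is the constant one $C(\lambda)\equiv\Lambda$.

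What you wrote is the standard self-contained proof, as found in Aliprantis and Border's treatment of the Maximum Theorem. You split continuity of $M$ into two halves:
\begin{itemize}
\item the lower half, which uses only lower hemicontinuity of $C$;
\item the upper half, which uses only upper hemicontinuity plus compactness of $C(y_0)$.
\end{itemize}
You isolate the upper half's key step as a cluster-point lemma: $y_\alpha\to y_0$ and $x_\alpha\in C(y_\alpha)$ force a cluster point in $C(y_0)$. You then reuse that lemma for upper hemicontinuity of $A$. There, lower hemicontinuity enters only through $M(y_{\alpha_\beta})\to M(y_0)$, which gives $f(\bar x,y_0)\ge M(y_0)$. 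Every step holds in arbitrary topological spaces, with no Hausdorff assumption on $X$ or $Y$. You only use that closed subsets of compact sets are compact, that cluster points are limits of subnets, and that limits in $\mathbb{R}$ are unique.

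The citation buys brevity. Your version shows which hypothesis does what. In particular, joint rather than separate continuity of $f$ drives both semicontinuity halves. It gives a product neighbourhood of $(x_0,y_0)$, and it lets you pass to the limit along $(x_{\alpha_\beta},y_{\alpha_\beta})$. That is therefore the property to verify for $(\mu,\lambda)\mapsto\Phi(\mu,g_t(\lambda))$ when the theorem is invoked in \Cref{thm-fixed-point}. In that application your argument collapses further: a constant correspondence is trivially upper and lower hemicontinuous, and the cluster-point lemma reduces to compactness of $\Lambda$.

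One small fix: your net-free variant of the lower-semicontinuity step is garbled as written. The intended argument is this. Continuity of $f$ at $(x_0,y_0)$ gives open sets $U_X\ni x_0$ and $U_Y\ni y_0$ with $f>M(y_0)-\varepsilon$ on $U_X\times U_Y$. Lower hemicontinuity then gives $C(y)\cap U_X\neq\emptyset$ for all $y$ in some neighbourhood of $y_0$ contained in $U_Y$.
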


\begin{theorem}[Kakutani's Fixed-Point Theorem, \cite{Kakutani1941AGO}]
\label{thm-kakutani}
Let $S$ be a non-empty, compact, and convex subset of a Euclidean space (or a locally convex topological vector space) $\mathbb{R}^n$. Let $\phi: S \to 2^S$ be a set-valued mapping. If $\phi$ is upper hemicontinuous (which implies a closed graph when $S$ is compact) and $\phi(x)$ is non-empty and convex for all $x \in S$, then $\phi$ has a fixed point. That is, there exists an $x^\star \in S$ such that $x^\star \in \phi(x^\star)$.
\end{theorem}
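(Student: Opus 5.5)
We reduce Kakutani's theorem to Brouwer's fixed-point theorem by building continuous piecewise-linear approximations of $\phi$ and passing to a limit. We treat the Euclidean case $S\subset\mathbb{R}^n$, which is the only one used in \Cref{appendix-proof-fixed-point}. We assume, as the parenthetical in \Cref{thm-kakutani} indicates, that $\phi$ has closed graph. Equivalently, $\phi$ is upper hemicontinuous with closed values, which is what \Cref{thm-berge} delivers when applied in the proof of \Cref{thm-fixed-point}.

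\textbf{Step 1 (reduction to a simplex).} We enclose $S$ in a large $n$-simplex $\Delta$ and let $r:\Delta\to S$ be the Euclidean projection onto $S$. This map is continuous because $S$ is closed and convex, and $r(x)=x$ for $x\in S$. We define $\psi:=\phi\circ r:\Delta\to 2^{S}\subset 2^\Delta$. It has non-empty convex values and closed graph, since $r$ is continuous and $\phi$ has closed graph. Any fixed point $x\in\psi(x)$ satisfies $x\in S$, hence $r(x)=x$ and $x\in\phi(x)$. So it suffices to prove the theorem for $S=\Delta$.

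\textbf{Step 2 (approximate continuous selections).} For each $k$, we take a triangulation $\mathcal{T}_k$ of $\Delta$ whose mesh is at most $1/k$, for instance by iterated barycentric subdivision. At every vertex $v$ of $\mathcal{T}_k$ we choose some $y(v)\in\phi(v)$. We define $f_k:\Delta\to\Delta$ by extending $v\mapsto y(v)$ affinely on each cell. Then $f_k$ is continuous and maps into $\Delta$ by convexity of $\Delta$, so Brouwer's theorem gives $x_k=f_k(x_k)$. Writing $x_k$ in barycentric coordinates of a cell with vertices $v_k^0,\dots,v_k^n$, we obtain
\[ x_k=\sum_{i=0}^n\theta_k^i\, y_k^i,\qquad y_k^i\in\phi(v_k^i),\qquad \|v_k^i-x_k\|\le 1/k. \]

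\textbf{Step 3 (limit).} By compactness of $\Delta$ and of the standard simplex of weights, we extract a subsequence along which $x_k\to x^\star$, $\theta_k\to\theta$ and $y_k^i\to y^i$ for every $i$. The mesh bound forces $v_k^i\to x^\star$ for every $i$. Since $(v_k^i,y_k^i)$ lies in the graph of $\phi$ and the graph is closed, we get $y^i\in\phi(x^\star)$. Passing to the limit in the displayed identity gives $x^\star=\sum_i\theta^i y^i$, a convex combination of points of the convex set $\phi(x^\star)$, so $x^\star\in\phi(x^\star)$.

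The main obstacle is Step 3, specifically justifying that the graph of $\phi$ is closed. Upper hemicontinuity alone is not enough: one also needs closed values, or compact values in the compact set $S$. I would state this hypothesis explicitly and note that it holds in our application, where \Cref{thm-berge} supplies compact values. Steps 1 and 2 are routine once Brouwer's theorem is assumed.
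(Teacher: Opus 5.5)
The paper gives no proof of this statement. It is quoted from \cite{Kakutani1941AGO} and used as a black box in \Cref{appendix-proof-fixed-point}. Your argument is correct and is in substance Kakutani's original one:
\begin{itemize}
\item piecewise-affine interpolation of vertex selections on finer and finer triangulations of a simplex;
\item Brouwer's theorem for each approximation;
\item a closed-graph limit combined with convexity of $\phi(x^\star)$;
\item extension to a general compact convex $S$ through a retraction, for which you use the metric projection.
\end{itemize}

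Your closing caveat is more than caution. The parenthetical in the statement is false, and the theorem fails for upper hemicontinuous maps whose values are not closed. On $S=[0,1]$, take $\phi(x)=(\tfrac12,1]$ for $x\le\tfrac12$ and $\phi(x)=(\tfrac12,\tfrac{2x+1}{4}]$ for $x>\tfrac12$. This map is upper hemicontinuous with non-empty convex values, yet $x\notin\phi(x)$ for every $x$, and its graph is not closed. So the hypothesis you add is genuinely needed: closed graph, or equivalently closed values when $S$ is compact. It holds in the paper's application, because \Cref{thm-berge} supplies compact values.

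Restricting to $\mathbb{R}^n$ leaves out the locally convex extension (the Fan--Glicksberg theorem), which would need a different approximation argument. The statement fixes $\mathbb{R}^n$ and the paper only uses the Euclidean case, so nothing is lost.
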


From the two previous Theorem, we can prove our Theorem.
\begin{theorem}
Suppose that $\Phi:  \Lambda \times \mathbb{R}^d \xrightarrow{} \mathbb{R}$ is concave in the first argument and $\mathcal{C}_1$ in the second argument. If $\Lambda \subset \mathbb{R}^J$ is a non empty, convex and compact subset, then for any continuous update rule
\begin{align*}
g_t : 
\begin{cases}
\Lambda \xrightarrow{} W \\
\lambda\xrightarrow{} g_t(\lambda)\\
\end{cases}
\end{align*} there exists $\lambda_\star \in \Lambda$ such that
\[ \Phi(\lambda_\star,g_t(\lambda_\star)) = \max_{\lambda \in \Lambda}  \Phi (\lambda,g_t (\lambda_\star)). \]
\end{theorem}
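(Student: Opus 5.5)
The plan is to recast the claim as a fixed point of a set-valued best-response map and to combine \Cref{thm-berge} with \Cref{thm-kakutani}. Define the correspondence $\phi:\Lambda\to 2^{\Lambda}$ by
\[
\phi(\lambda') := \operatorname*{arg\,max}_{\lambda\in\Lambda} \Phi\bigl(\lambda, g_t(\lambda')\bigr).
\]
A point $\lambda_\star$ with $\lambda_\star\in\phi(\lambda_\star)$ is exactly a point satisfying $\Phi(\lambda_\star,g_t(\lambda_\star))=\max_{\lambda\in\Lambda}\Phi(\lambda,g_t(\lambda_\star))$. So the statement reduces to checking the hypotheses of Kakutani's theorem for $\phi$ on the set $S=\Lambda$, which is non-empty, compact and convex by assumption.

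First, I would establish upper hemicontinuity and non-empty compact values using \Cref{thm-berge}. Take $X=\Lambda$ and $Y=\Lambda$ (the parameter $\lambda'$), the constant constraint correspondence $C(\lambda')\equiv\Lambda$, and the objective $F(\lambda,\lambda'):=\Phi(\lambda,g_t(\lambda'))$. A constant correspondence with non-empty compact value is trivially both upper and lower hemicontinuous. Berge's theorem then yields that $\phi$ is upper hemicontinuous with non-empty compact values, provided $F$ is jointly continuous on $\Lambda\times\Lambda$. Since $g_t$ is continuous, this joint continuity follows from joint continuity of $\Phi$ on $\Lambda\times\mathbb{R}^d$.

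Second, I would prove convexity of the values of $\phi$. For fixed $\lambda'$, set $w=g_t(\lambda')$ and let $M=\max_{\lambda\in\Lambda}\Phi(\lambda,w)$. Then
\[
\phi(\lambda')=\{\lambda\in\Lambda:\Phi(\lambda,w)\ge M\},
\]
a superlevel set of the concave function $\Phi(\cdot,w)$ intersected with the convex set $\Lambda$, hence convex. Kakutani's theorem (\Cref{thm-kakutani}) then gives $\lambda_\star\in\phi(\lambda_\star)$, which is the claim.

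The main obstacle is the joint continuity of $\Phi$ that Berge's theorem requires. The hypotheses, continuity in $\lambda$ for each $w$ and $C^1$ in $w$ for each $\lambda$, are separate regularity conditions and do not by themselves imply joint continuity. My first approach would be to use the structure actually needed in the paper: for $\Phi(\lambda,w)=\langle\lambda,f(w)\rangle-R(\lambda)$, the map is jointly continuous because $f$ is continuous, the pairing is bilinear, and $R$ is continuous on $\Lambda$ (a convex function that is finite on a compact polytope-like set, or simply assumed continuous). In the general statement I would either read the hypothesis as joint continuity, or obtain it from equicontinuity in $w$: a gradient bound $\|\nabla_w\Phi(\lambda,w)\|\le G$ uniform in $\lambda$, as in \Cref{Assumption-linear-convex}, gives
\[
|\Phi(\lambda,w)-\Phi(\lambda_0,w_0)|\le G\|w-w_0\|+|\Phi(\lambda,w_0)-\Phi(\lambda_0,w_0)|,
\]
and the right-hand side tends to zero as $(\lambda,w)\to(\lambda_0,w_0)$. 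This is the step where I would state the additional uniformity explicitly.
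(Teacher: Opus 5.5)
Your proposal follows the paper's proof. Both use the best-response correspondence, \Cref{thm-berge} for non-empty compact values and upper hemicontinuity, convexity of the argmax as a superlevel set of the concave $\Phi(\cdot,w)$, and then \Cref{thm-kakutani}. You go beyond the paper in flagging that Berge needs joint continuity of $(\lambda,\lambda')\mapsto\Phi(\lambda,g_t(\lambda'))$; the paper just asserts it. You are right that continuity in $\lambda$ plus $C^1$ in $w$ does not imply joint continuity in general. On a disk, for example, a concave dip of fixed depth that slides along the boundary while narrowing fast enough is locally constant in $w$ at each fixed $\lambda$, yet jointly discontinuous. Your two repairs are valid, but both add hypotheses.

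The extra hypotheses are not needed. Kakutani only needs a closed graph, for which joint \emph{upper} semicontinuity is enough, and concavity supplies it.
\begin{itemize}
\item Take $w_n\to w$ and $\lambda_n\to\lambda$, fix $c\in\operatorname{relint}\Lambda$ and $\theta\in(0,\tfrac12)$, and set $y_n=(1-\theta)\lambda_n+\theta c$ and $z_n=(1-2\theta)\lambda_n+2\theta c$.
\item Since $y_n$ is the midpoint of $\lambda_n$ and $z_n$, concavity gives $\Phi(\lambda_n,w_n)\le 2\Phi(y_n,w_n)-\Phi(z_n,w_n)$.
\item The limits $y,z$ lie in $\operatorname{relint}\Lambda$, where pointwise convergence of concave functions is locally uniform (Rockafellar, Convex Analysis, Thm.~10.8). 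Hence $\limsup_n\Phi(\lambda_n,w_n)\le 2\Phi(y,w)-\Phi(z,w)$.
\item Letting $\theta\to0$ and using continuity of $\Phi(\cdot,w)$ on $\Lambda$ yields $\limsup_n\Phi(\lambda_n,w_n)\le\Phi(\lambda,w)$.
\item Now let $\lambda_n\in\phi(\lambda_n')$ with $\lambda_n\to\lambda$ and $\lambda_n'\to\lambda'$. Pass to the limit in $\Phi(\lambda_n,g_t(\lambda_n'))\ge\Phi(\mu,g_t(\lambda_n'))$, using the bound above on the left and continuity of $g_t$ and of $\Phi(\mu,\cdot)$ on the right. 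This gives $\lambda\in\phi(\lambda')$.
\end{itemize}
So the theorem holds under separate continuity alone.

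Two smaller points. First, continuity of $\Phi(\cdot,w)$ on $\Lambda$ is genuinely required, and it is missing from this restatement (the main-text version includes it). Without it, the function $\Phi(\lambda,w)=\lambda$ on $[0,1)$, $\Phi(1,w)=0$ is concave on $[0,1]$ but has no maximizer. Second, your parenthetical that a convex $R$ finite on a polytope is continuous is wrong: such an $R$ is only upper semicontinuous. For instance, $R=0$ on $[0,1)$ with $R(1)=1$, combined with $f\equiv1$, reproduces the previous example. Continuity of $R$ must therefore be assumed, as your fallback says.
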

\begin{proof}
We consider the best-response mapping 
\begin{align*}
\forall \lambda \in \Lambda, \quad S(\lambda) := \arg\max_{\mu \in \Lambda} \Phi(\mu, g_t(\lambda))
\end{align*}
Because the objective $\Phi(\lambda; w_t)$ and the continuous update rule $g_t(\lambda)$ are continuous, and the constraint set $\Lambda$ is compact, \Cref{thm-berge} guarantees that for all $\lambda \in \Lambda$, $S(\lambda)$ is non-empty, compact-valued, and upper hemicontinuous (possesses a closed graph).

We note that the mapping $S(\lambda)$ is convex because the function $\Phi(\cdot, w_t)$ is concave. Since $S(\lambda)$ is an upper hemicontinuous, convex-valued mapping from a compact, convex set to itself, \Cref{thm-kakutani} guarantees it must have a fixed point $\lambda_\star \in S(\lambda_\star)$.
\end{proof}

\section{Proof for the Acceleration}
\subsection{Noiseless case}
\label{appendix-proof-accelerated-noiseless}
We recall  the accelerated algorithm 
\begin{equation}
\begin{aligned}
\begin{cases}
w_{t}^{md} & = \frac{A_{t-1}}{A_{t}} w_{t}^{ag} + \frac{\alpha_{t}}{A_{t}} w_{t} \\
& \lambda_t^{\star}  \quad  \textrm{Fixed point from \Cref{thm-fixed-point}} \\
w_{t+1} & =  w_t - \alpha_t \nabla f(w_t^{md}) \lambda_t^{\star}    \\
w_{t+1}^{ag} & = \frac{A_{t-1}}{A_{t}} w_{t}^{ag} +\frac{\alpha_{t}}{A_{t}} w_{t+1}
\end{cases}
\end{aligned}
\end{equation}
With the following fixed-point property
\[ \lambda_t^{\star,\top} f(w_{t+1}^{ag}) - R(\lambda_t^{\star} ) = \max_{\lambda \in \Lambda} [\lambda \cdot  f(w_{t+1}^{ag}) - R(\lambda)] \]
\begin{proof}

At iteration $t$, we have $w_{t+1} = w_t - \alpha_t \nabla f(w_t^{md}) \lambda_t^{\star}   $, therefore
\begin{align*}
 \alpha_t \langle   \nabla f(w_t^{md}) \lambda_t^{\star}  , w_{t+1} - w_\star \rangle & =\langle w_t - w_{t+1}, w_{t+1} - w_\star \rangle   \\
 & = \frac{1}{2} \Big( \| w_\star - w_t\|_2^2 - \| w_\star - w_{t+1}\|_2^2 - \| w_{t+1} - w_t\|_2^2 \Big)  
\end{align*}
where we use the identity $\langle a,b \rangle  = \frac{1}{2} \left( \| a+b\|_2^2 - \| a\|_2^2 - \| b\|_2^2 \right)$. Then, we rearrange the term into
\begin{align*}
 \alpha_t \langle \nabla f(w_t^{md}) \lambda_t^{\star} , w_t^{md} - w_\star \rangle & = \frac{1}{2} \| w_\star - w_t \|_2^2 -\frac{1}{2} \| w_\star - w_{t+1}\|_2^2 \\
 & +  \alpha_t \langle \nabla f(w_t^{md}) \lambda_t^{\star}, w_t^{md} - w_{t+1}   \rangle  - \frac{1}{2} \| w_{t+1}-  w_t \|_2^2
\end{align*}

We apply the convexity of $\lambda_{t}^{\star} \cdot f$ on $w$ from \Cref{Assumption-linear-convex}
\begin{equation}
\begin{aligned}
\lambda_t^{\star,\top} f(w_t^{md}) -  \lambda_t^{\star,\top} f(w_\star) & \leq  \langle  \nabla f(w_t^{md}) \lambda_t^{\star}, w_t^{md} - w_\star \rangle  \\
\alpha_t \Big(  \lambda_t^{\star,\top} f(w_t^{md}) - \lambda_t^{\star,\top} f(w_\star)  \Big)   & \leq  \frac{1}{2}\| w_{\star} - w_t \|_2^2  - \frac{1}{2}\|w_{\star} - w_{t+1}\|_2^2   - \frac{1}{2} \| w_{t+1}- w_t\|_2^2   \\
& + \ \alpha_t \langle \nabla f(w_t^{md}) \lambda_t^{\star}, w_t^{md} - w_{t+1}   \rangle 
\end{aligned}
\end{equation}
After arranging, we have
\begin{equation}
\label{eq-noiseless-step1}
\begin{aligned}
\alpha_t \Big(  \lambda_t^{\star,\top} f(w_t^{md}) +  \langle \nabla f(w_t^{md}) \lambda_t^{\star} , w_{t+1} - w_{t}^{md}  \rangle \Big)   & \leq  \frac{1}{2}\| w_{\star} - w_t \|_2^2  - \frac{1}{2}\|w_{\star} - w_{t+1}\|_2^2      \\
& + \alpha_t \lambda_t^{\star,\top} f(w_{\star}) - \frac{1}{2} \| w_{t+1}- w_t\|_2^2 
\end{aligned}
\end{equation}
On the left hand side, it is related to $\lambda_{t}^{\star} \cdot f(w_{t+1}^{ag})$. We use the $L$-smoothness of $\lambda_{t}^{\star} \cdot f $ , 
\begin{align*}
\lambda_{t}^{\star} \cdot f(w_{t+1}^{ag}) & \leq \lambda_{t}^{\star} \cdot f(w_{t}^{md}) + \langle \nabla f(w_{t}^{md}) \lambda_{t}^{\star} , w_{t+1}^{ag}  - w_t^{md} \rangle   + \frac{L}{2} \| w_{t+1}^{ag} - w_t^{md}\|^2 
\end{align*}
From the fixed-point property $\lambda_t^{\star,\top} f(w_{t+1}^{ag}) + R(\lambda_t^{\star} ) = \max_{\lambda \in \Lambda} [\lambda \cdot  f(w_{t+1}^{ag}) + R(\lambda)]$, we know
\begin{equation}
\label{eq-noiseless-step-2}
\begin{aligned}
\max_{\lambda \in \Lambda }\Phi(\lambda ,w_{t+1}^{ag}) & = \lambda_t^{\star,\top} f(w_{t+1}^{ag}) - R(\lambda_t^{\star} )   \\
& \leq \lambda_{t}^{\star} \cdot f(w_{t}^{md}) + \langle \nabla f(w_{t}^{md}) \lambda_{t}^{\star} , w_{t+1}^{ag}  - w_t^{md} \rangle    \\
& + \frac{L}{2} \| w_{t+1}^{ag} - w_t^{md}\|_2^2 - R(\lambda_t^{\star} )  
\end{aligned}
\end{equation}
Now we make the following decomposition for the scalar product :
\begin{align*}
 \langle  \nabla f(w_{t}^{md}) \lambda_{t}^{\star}  , w_{t+1}^{ag}  - w_t^{md} \rangle  
 = \frac{\alpha_t}{A_t} \langle  \nabla f(w_{t}^{md}) \lambda_{t}^{\star}  , w_{t+1}  - w_t^{md} \rangle  
 +  \frac{A_{t-1}}{A_t}\langle \nabla f(w_{t}^{md}) \lambda_{t}^{\star}  , w_{t}^{ag}  - w_t^{md} \rangle 
\end{align*}
Therefore, \Cref{eq-noiseless-step-2} scaled by $A_t$ becomes
\begin{equation}
\label{eq-noiseless-step-3}
\begin{aligned}
A_t \max_{\lambda \in \Lambda }\Phi(\lambda ,w_{t+1}^{ag}) & \leq \alpha_t  \left( \lambda_t^{\star,\top} f(w_{t}^{md}) +  \langle \nabla f(w_{t}^{md}) \lambda_{t}^{\star}, w_{t+1}  - w_t^{md} \rangle  \right) \\
& + A_{t-1} \left( \lambda_t^{\star,\top} f(w_{t}^{md}) + \langle  \nabla f(w_{t}^{md}) \lambda_{t}^{\star}, w_{t}^{ag}  - w_t^{md} \rangle  \right) \\
& + \frac{LA_t}{2} \| w_{t+1}^{ag} - w_t^{md}\|_2^2 - A_t R(\lambda_t^{\star} )  
\end{aligned}
\end{equation}
We recall \Cref{eq-noiseless-step1} which contains the first line in \Cref{eq-noiseless-step-3}
\begin{align*}
\alpha_t \Big(  \lambda_t^{\star,\top} f(w_t^{md})  +  \langle\nabla f(w_{t}^{md}) \lambda_{t}^{\star} , w_{t+1} - w_{t}^{md}  \rangle \Big)   & \leq  \frac{1}{2}\| w_{\star}-w_t\|_2^2  - \frac{1}{2}\| w_{\star} - w_{t+1} \|_2^2      \\
& + \alpha_t \lambda_t^{\star}  \cdot f(w_\star) - \frac{1}{2} \| w_{t+1}- w_t\|_2^2 
\end{align*}
The second term in \Cref{eq-noiseless-step-3} is related to $\lambda_t^{\star,\top} f(w_t^{ag})$. By the convexity of $\lambda_{t+1} \cdot f$ 
\begin{align*}
A_{t-1} \left( \lambda_t^{\star,\top} f(w_{t}^{md}) + \langle  \nabla  f(w_t^{md})\lambda_t, w_{t}^{ag}  - w_t^{md} \rangle  \right) \leq A_{t-1} \lambda_t^{\star,\top} f( w_t^{ag})
\end{align*}
Using both simplifications, \Cref{eq-noiseless-step-3} becomes 
\begin{equation}
\label{eq-noiseless-step-4}
\begin{aligned}
A_t \max_{\lambda \in \Lambda }\Phi(\lambda ,w_{t+1}^{ag}) & \leq \frac{1}{2} \| w_{\star}-w_t\|_2^2  - \frac{1}{2}\| w_{\star} - w_{t+1} \|_2^2  +  \alpha_t \lambda_t^{\star,\top} f(w_\star) - \frac{1}{2} \| w_{t+1}- w_t\|^2 \\
& + A_{t-1} \lambda_t^{\star,\top} f(w_t^{ag}) + \frac{L A_t }{2} \| w_{t+1}^{ag} - w_t^{md}\|^2 - A_t R(\lambda_t^{\star} )
\end{aligned}
\end{equation}
By noticing :
\begin{align*}
w_{t+1}^{ag} - w_t^{md} = \frac{\alpha_t}{A_t} \Big( w_{t+1} - w_t \Big)
\end{align*}
We have after rearranging with $f$ and $\Phi$ and decomposing $A_t R(\lambda_t^{\star}) = (A_{t-1} + \alpha_t)R(\lambda_t^{\star})$:
\begin{equation}
\begin{aligned}
A_t  \max_{\lambda \in \Lambda }\Phi(\lambda ,w_{t+1}^{ag})  & \leq \frac{1}{2}\| w_{\star} - w_t \|_2^2 -  \frac{1}{2} \| w_{\star} - w_{t+1} \|_2^2  + \left( \frac{L\alpha_t^2}{2A_t} - \frac{1}{2} \right) \| w_{t+1}- w_t\|^2 \\
& + A_{t-1}  \max_{\lambda \in \Lambda }\Phi(\lambda, w_t^{ag}) + \alpha_t  \Phi(\lambda_t^{\star},w_\star) 
\end{aligned}
\end{equation}
By telescopic sum, we have
\begin{equation}
\label{eq-noiseless-step-5}
\begin{aligned}
A_T  \max_{\lambda \in \Lambda }\Phi(\lambda ,w_{T+1}^{ag})  & \leq \frac{1}{2} \| w_{\star} - w_{\mathrm{init}} \|_2^2 + \sum_{t=1}^{T} \left( \frac{L\alpha_t^2}{A_t} - \frac{1}{2} \right) \| w_{t+1}- w_t\|^2 \\
& + A_{0}  \max_{\lambda \in \Lambda }\Phi(\lambda, w_{\mathrm{init}}) + \sum_{t=1}^{T} \alpha_t  \Phi(\lambda_t^{\star},w_\star) \\
 & \leq \frac{1}{2} \| w_{\star} - w_{\mathrm{init}} \|_2^2 + A_{0}  \max_{\lambda \in \Lambda }\Phi(\lambda, w_{\mathrm{init}})  \\
& +  \sum_{t=1}^{T}  \left( \frac{L\alpha_t^2}{2A_t} - \frac{1}{2} \right) \| w_{t+1}- w_t\|^2  + A_t \max_{\lambda \in \Lambda}  \Phi(\lambda,w_\star) 
\end{aligned}
\end{equation}
From \Cref{lem-step-size}, the quadratic term cancels.
\end{proof}

\subsection{With noises}
\label{appendix-proof-accelerated-with-noise}

We extend the previous result with a precision $\epsilon_t$.
\begin{equation}
\begin{aligned}
\begin{cases}
w_{t}^{md} & = \frac{A_{t-1}}{A_{t}} w_{t}^{ag} + \frac{\alpha_{t}}{A_{t}} w_{t} \\
& \lambda_t^{\star}  \quad  \textrm{Fixed point from \Cref{thm-fixed-point}} \\
w_{t+1} & =  w_t - \alpha_t \nabla f(w_t^{md}) \lambda_t^{\star}    \\
w_{t+1}^{ag} & = \frac{A_{t-1}}{A_{t}} w_{t}^{ag} +\frac{\alpha_{t}}{A_{t}} w_{t+1}
\end{cases}
\end{aligned}
\end{equation}
with the following property \Cref{eq-epsilont-t}
\[ \max_{\lambda \in \Lambda}  \Phi (\lambda,w_{t+1}^{ag}) - \Phi(\lambda_{t+1}, w_{t+1}^{ag}) \leq \epsilon_t + \frac{L}{2} \|w_{t+1}^{ag} - w_t^{md} \|_2^2.\]
\begin{proof}
At iteration $t$, we have $w_{t+1} = w_t - \alpha_t \nabla f( w_t^{md})\lambda_{t+1} $, therefore
\begin{align*}
 \alpha_t \langle  \nabla f( w_t^{md})\lambda_{t+1} , w_{t+1} - w_\star \rangle & =\langle w_t - w_{t+1}, w_{t+1} - w_\star \rangle   \\
 & = \frac{1}{2} \Big( \| w_\star - w_t\|_2^2 - \| w_\star - w_{t+1}\|_2^2 - \| w_{t+1} - w_t\|_2^2 \Big)  
\end{align*}
where we use the identity $\langle a,b \rangle  = \frac{1}{2} \left( \| a+b\|_2^2 - \| a\|_2^2 - \| b\|_2^2 \right)$. Then, we rearrange the term into
\begin{align*}
 \alpha_t \langle  \nabla f( w_t^{md})\lambda_{t+1} , w_t^{md} - w_\star \rangle & = \frac{1}{2} \| w_\star - w_t \|_2^2 -\frac{1}{2} \| w_\star - w_{t+1}\|_2^2 \\
 & +  \alpha_t \langle \nabla f( w_t^{md})\lambda_{t+1} , w_t^{md} - w_{t+1}   \rangle  - \frac{1}{2} \| w_{t+1}-  w_t \|_2^2
\end{align*}

We apply the convexity of $\lambda_{t+1} f$,
\begin{equation}
\label{eq-acc-step1}
\begin{aligned}
\lambda_{t+1} f(w_t^{md}) -  \lambda_{t+1} f(w_\star) & \leq  \langle  \nabla f( w_t^{md})\lambda_{t+1} , w_t^{md} - w_\star \rangle  \\
\alpha_t \Big(  \lambda_{t+1} f(w_t^{md})  +  \langle\nabla f( w_t^{md})\lambda_{t+1}  , w_{t+1} - w_{t}^{md}  \rangle \Big)   & \leq  \frac{1}{2}\| w_{\star} - w_t \|_2^2  - \frac{1}{2}\|w_{\star} - w_{t+1}\|_2^2      \\
& + \alpha_t \lambda_{t+1} f(w_\star) - \frac{1}{2} \| w_{t+1}- w_t\|_2^2 
\end{aligned}
\end{equation}
On the left hand side, it is related to $\lambda_{t+1}f( w_{t+1}^{ag})$. We use the $L$-smoothness of $\lambda_{t+1} \cdot f$, 
\begin{align*}
\Phi(\lambda_{t+1},w_{t+1}^{ag}) & =  \lambda_{t+1} f(w_{t+1}^{ag}) - R(\lambda_{t+1})\\
& \leq \lambda_{t+1}f(w_{t}^{md}) + \langle  \nabla f( w_t^{md})\lambda_{t+1}, w_{t+1}^{ag}  - w_t^{md} \rangle  \\
& + \frac{L}{2} \| w_{t+1}^{ag} - w_t^{md}\|^2 - R(\lambda_{t+1})
\end{align*}
From the Assumption in \Cref{eq-epsilont-t},
\begin{equation}
\label{eq-acc-step-2}
\begin{aligned}
\max_{\lambda \in \Lambda }\Phi(\lambda ,w_{t+1}^{ag}) & \leq \lambda_{t+1}f(w_{t}^{md}) + \langle  \nabla f(w_t^{md})\lambda_{t+1}, w_{t+1}^{ag}  - w_t^{md} \rangle  \\
& +  L \| w_{t+1}^{ag} - w_t^{md}\|^2 + \epsilon_t - R(\lambda_{t+1})
\end{aligned}
\end{equation}
Now we make the following decomposition for the scalar product :
\begin{equation}
\begin{aligned}
 \langle  \nabla f(w_t^{md})\lambda_{t+1}, w_{t+1}^{ag}  - w_t^{md} \rangle  & 
 = \frac{\alpha_t}{A_t} \langle   \nabla f(w_t^{md})\lambda_{t+1}, w_{t+1}  - w_t^{md} \rangle  \\
 & +  \frac{A_{t-1}}{A_t}\langle   \nabla f(w_t^{md})\lambda_{t+1}, w_{t}^{ag}  - w_t^{md} \rangle 
\end{aligned}
\end{equation}
Therefore, \Cref{eq-acc-step-2} scaled by $A_t$ becomes
\begin{equation}
\label{eq-acc-step-3}
\begin{aligned}
A_t \max_{\lambda \in \Delta }\Phi(\lambda ,w_{t+1}^{ag}) & \leq \alpha_t  \left( \lambda_{t+1} f(w_{t}^{md}) +  \langle  \nabla f(w_t^{md})\lambda_{t+1}, w_{t+1}  - w_t^{md} \rangle  \right) \\
& + A_{t-1} \left(\lambda_{t+1}f(w_{t}^{md}) + \langle  \nabla f(w_t^{md})\lambda_{t+1}, w_{t}^{ag}  - w_t^{md} \rangle  \right) \\
& + L A_t \| w_{t+1}^{ag} - w_t^{md}\|^2 + A_t \epsilon_t - A_t R(\lambda_{t+1})
\end{aligned}
\end{equation}
The first line in \Cref{eq-acc-step-3}  appears in \Cref{eq-acc-step1} from Gradient descent.
\begin{align*}
\alpha_t \Big(  \lambda_{t+1}f(w_t^{md})  +  \langle \nabla f(w_t^{md})\lambda_{t+1} , w_{t+1} - w_{t}^{md}  \rangle \Big)   & \leq  \frac{1}{2}\| w_{\star}-w_t\|_2^2  - \frac{1}{2}\| w_{\star} - w_{t+1} \|_2^2      \\
& + \alpha_t \lambda_{t+1} f(w_\star) - \frac{1}{2} \| w_{t+1}- w_t\|^2 
\end{align*}
The second term in \Cref{eq-acc-step-3} is related to $\lambda_{t+1}f(w_t^{ag})$. By the convexity of $\lambda_{t+1}f$
\begin{align*}
A_{t-1} \left( \lambda_{t+1}f(w_{t}^{md}) + \langle  \nabla f( w_t^{md})\lambda_{t+1}, w_{t}^{ag}  - w_t^{md} \rangle  \right) \leq A_{t-1} \lambda_{t+1} f(w_t^{ag})
\end{align*}
Using both simplifications, \Cref{eq-acc-step-3} becomes 
\begin{equation}
\label{eq-acc-step-4}
\begin{aligned}
A_t \max_{\lambda \in \Delta }\Phi(\lambda ,w_{t+1}^{ag}) & \leq \frac{1}{2} \| w_{\star}-w_t\|_2^2  - \frac{1}{2}\| w_{\star} - w_{t+1} \|_2^2  +  \alpha_t \lambda_{t+1} f(w_\star) - \frac{1}{2} \| w_{t+1}- w_t\|^2 \\
& + A_{t-1} \lambda_{t+1} f(w_t^{ag}) + L A_t  \| w_{t+1}^{ag} - w_t^{md}\|^2 + A_t \epsilon_t - A_t R(\lambda_{t+1})
\end{aligned}
\end{equation}
By noticing :
\begin{align*}
w_{t+1}^{ag} - w_t^{md} & = \frac{\alpha_t}{A_t} \Big( w_{t+1} - w_t \Big) \\
A_t R(\lambda_{t+1}) & = (A_{t-1} + \alpha_t)R(\lambda_{t+1})
\end{align*}
We have after rearranging :
\begin{equation}
\label{eq-main-acceleration-results}
\begin{aligned}
A_t  \max_{\lambda \in \Delta }\Phi(\lambda ,w_{t+1}^{ag})  & \leq \frac{1}{2}\| w_{\star} - w_t \|_2^2 -  \frac{1}{2} \| w_{\star} - w_{t+1} \|_2^2  + \left( \frac{L\alpha_t^2}{A_t} - \frac{1}{2} \right) \| w_{t+1}- w_t\|^2 \\
& + A_{t-1} \Big( \lambda_{t+1}f(w_t^{ag}) - R(\lambda_{t+1}) \Big) + \alpha_t  \Big( \lambda_{t+1} f(w_\star)  -R(\lambda_{t+1}) \Big) +  A_t \epsilon_t \\
& \leq \frac{1}{2}\| w_{\star} - w_t \|_2^2 -  \frac{1}{2} \| w_{\star} - w_{t+1} \|_2^2  + \left( \frac{L\alpha_t^2}{A_t} - \frac{1}{2} \right) \| w_{t+1}- w_t\|^2 \\
& + A_{t-1}  \max_{\lambda \in \Lambda }\Phi(\lambda, w_t^{ag}) + \alpha_t  \Phi(\lambda_{t+1},w_\star)  +  A_t \epsilon_t 
\end{aligned}
\end{equation}
From \Cref{lem-step-size}, the quadratic terms cancel and we conclude by applying telescopic sum as before.
\end{proof}

\section{Cross-evaluation gap}
\label{appendix-proof-cross-gap}

We are demonstrating \Cref{thm-gap} which shows that $\forall \lambda,\mu \in \Lambda$
\begin{align*}
\Phi(\mu, w_{t+1}^{ag}(\lambda)) - \Phi(\lambda, w_{t+1}^{ag}(\lambda)) \le \langle \nabla S_t(\lambda),\lambda-\mu \rangle + R(\lambda) - R(\mu) + \frac{L }{2 } \| w_{t+1}^{ag}(\lambda) - w_t^{md} \|_2^2
\end{align*}
\begin{proof}
We recall that $V_t(\lambda) := \nabla_w \Phi(\lambda,w_t^{md})$. By the definition of the Nesterov aggregate sequence, we have:
\begin{equation}
\label{eq-gap-step1}
\begin{aligned}
w_{t+1}^{ag}(\lambda) & = w_t^{md} + \frac{\alpha_t}{A_t} (w_{t+1}(\lambda) - w_t) \\
w_{t+1}^{ag}(\lambda) - w_t^{md} & = -\frac{\alpha_t^2}{A_t} V_t(\lambda) = -\frac{\alpha_t^2}{A_t}  \nabla f(w_t^{md}) \lambda 
\end{aligned}
\end{equation}
Using the $L$-smoothness of $\Phi$ with respect to the primal variable $w$ ,we upper-bound the function value for any chosen adversary $\mu \in \Lambda$:
\[\Phi(\mu, w_{t+1}^{ag}(\lambda)) \le \Phi(\mu, w_t^{md}) + \langle V_t(\mu), w_{t+1}^{ag}(\lambda) - w_t^{md} \rangle + \frac{L}{2} \vert{}\vert{}w_{t+1}^{ag}(\lambda) - w_t^{md}\vert{}\vert{}_2^2\]

Simultaneously, using the convexity of $\Phi$ with respect to $w$ for the dual variable $\lambda \in \Lambda$, we lower-bound its value:
\[\Phi(\lambda, w_{t+1}^{ag}(\lambda)) \ge \Phi(\lambda, w_t^{md}) + \langle V_t(\lambda), w_{t+1}^{ag}(\lambda) - w_t^{md} \rangle\]

Subtracting the convexity inequality from the smoothness inequality isolates the cross-evaluation gap:
\begin{equation}
\label{eq-gap-step2}
\begin{aligned}
\Phi(\mu, w_{t+1}^{ag}(\lambda)) - \Phi(\lambda, w_{t+1}^{ag}(\lambda)) & \le \Phi(\mu, w_t^{md}) - \Phi(\lambda, w_t^{md}) \\
& + \langle V_t(\mu) - V_t(\lambda), w_{t+1}^{ag}(\lambda) - w_t^{md} \rangle + \frac{L}{2} \vert{}\vert{}w_{t+1}^{ag}(\lambda) - w_t^{md}\vert{}\vert{}_2^2
\end{aligned}
\end{equation}
We now substitute \Cref{eq-gap-step1} into the inner product term:
\begin{align*}
\langle V_t(\mu) - V_t(\lambda), w_{t+1}^{ag}(\lambda) - w_t^{md} \rangle & = -\frac{\alpha_t^2}{A_t} \langle \nabla f(w_t^{md})(\mu - \lambda) ,  \nabla f(w_t^{md}) \lambda  \rangle  \\
& =  \frac{\alpha_t^2}{A_t} \langle  \lambda - \mu,   \nabla f(w_t^{md})^{\top} \nabla f(w_t^{md}) \lambda \rangle 
\end{align*}
Now we decompose $\Phi$,
\begin{align*}
\Phi(\mu, w_t^{md}) - \Phi(\lambda, w_t^{md}) = (\mu-\lambda)\cdot f(w_t^{md}) + R(\lambda ) - R(\mu) 
\end{align*}
Substituting these two lines into \Cref{eq-gap-step2}, we obtain
\begin{align*}
\Phi(\mu, w_{t+1}^{ag}(\lambda)) - \Phi(\lambda, w_{t+1}^{ag}(\lambda)) & \le \langle \frac{\alpha_t^2}{A_t} \nabla f(w_t^{md})^{\top} \nabla f(w_t^{md}) \lambda - f(w_t^{md})  , \lambda -\mu \rangle \\
&  + \frac{L}{2} \vert{}\vert{}w_{t+1}^{ag}(\lambda) - w_t^{md}\vert{}\vert{}_2^2 + R(\lambda) - R(\mu)
\end{align*}
we conclude by noticing from \Cref{eq-moreau-env} that 
\[\nabla S_t(\lambda) = \frac{\alpha_t^2}{A_t} \nabla f(w_t^{md})^{\top} \nabla f(w_t^{md}) \lambda  - f(w_t^{md}).\]
\end{proof}
\subsection{General upper bound for smooth concave-convex function}
In this subsection, we suppose that $\Phi$ is a general smooth concave-convex function, the general form is the Moreau envelop 
\begin{align*}
V_t(\lambda) & := \nabla_w \Phi(\lambda, w_t^{md})  \\
h_t(\lambda) & :=  \frac{\alpha_t^2}{2 A_t} \Vert{}V_t(\lambda)\Vert{}_2^2 - \Phi(\lambda, w_t^{md}) 
\end{align*}
We will show the new upper bound in the general setting
\begin{theorem}
\label{thm-ht-general}
For any $\lambda, \mu \in \Lambda$, the cross-evaluation gap is strictly upper-bounded by:
\begin{align*}
\Phi(\mu, g_t(\lambda)) - \Phi(\lambda, g_t(\lambda)) \le \Big[ h_t(\lambda) - h_t(\mu) \Big] + \frac{\alpha_t^2}{2 A_t} \Big\Vert{} V_t(\mu) - V_t(\lambda) \Big\Vert{}_2^2 + \frac{L \alpha_t^4}{2 A_t^2} \Big\Vert{} V_t(\lambda) \Big\Vert{}_2^2
\end{align*}
where $h_t$ refers to \Cref{eq-moreau-env} and $g_t(\lambda):=w_{t+1}^{ag}(\lambda)$.
\end{theorem}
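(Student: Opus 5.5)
The plan is to repeat the two-sided argument used for \Cref{thm-gap}, now without the affine structure, and to show that the two extra terms on the right-hand side are produced exactly by completing a square. Write $a_t := \alpha_t^2/A_t$. By \Cref{eq-gap-step1} in its general form, the aggregate step reads $g_t(\lambda) = w_{t+1}^{ag}(\lambda) = w_t^{md} - a_t V_t(\lambda)$. Hence $g_t(\lambda) - w_t^{md} = -a_t V_t(\lambda)$ and $\|g_t(\lambda)-w_t^{md}\|_2^2 = a_t^2\|V_t(\lambda)\|_2^2 = \frac{\alpha_t^4}{A_t^2}\|V_t(\lambda)\|_2^2$. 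This already accounts for the last term of the claimed bound.

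\textbf{Step 1 (upper bound at $\mu$).} For the adversary $\mu$, I will use $L$-smoothness of $w\mapsto\Phi(\mu,w)$ at $w_t^{md}$:
\[
\Phi(\mu,g_t(\lambda)) \le \Phi(\mu,w_t^{md}) - a_t\langle V_t(\mu),V_t(\lambda)\rangle + \frac{L a_t^2}{2}\|V_t(\lambda)\|_2^2 .
\]

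\textbf{Step 2 (lower bound at $\lambda$).} For $\lambda$ itself, I will use convexity of $w\mapsto\Phi(\lambda,w)$:
\[
\Phi(\lambda,g_t(\lambda)) \ge \Phi(\lambda,w_t^{md}) - a_t\|V_t(\lambda)\|_2^2 .
\]
Subtracting the two inequalities gives
\[
\Phi(\mu,g_t(\lambda)) - \Phi(\lambda,g_t(\lambda)) \le \Phi(\mu,w_t^{md}) - \Phi(\lambda,w_t^{md}) + a_t\|V_t(\lambda)\|_2^2 - a_t\langle V_t(\mu),V_t(\lambda)\rangle + \frac{L a_t^2}{2}\|V_t(\lambda)\|_2^2 .
\]

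\textbf{Step 3 (identification with $h_t$).} By definition of $h_t$,
\[
h_t(\lambda)-h_t(\mu) = \frac{a_t}{2}\|V_t(\lambda)\|_2^2 - \frac{a_t}{2}\|V_t(\mu)\|_2^2 + \Phi(\mu,w_t^{md}) - \Phi(\lambda,w_t^{md}).
\]
Subtracting this from the right-hand side of Step 2, the $\Phi(\cdot,w_t^{md})$ terms cancel. What remains is
\[
\frac{a_t}{2}\|V_t(\lambda)\|_2^2 - a_t\langle V_t(\mu),V_t(\lambda)\rangle + \frac{a_t}{2}\|V_t(\mu)\|_2^2 = \frac{a_t}{2}\|V_t(\mu)-V_t(\lambda)\|_2^2 ,
\]
which is the middle term of the claim. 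Together with the smoothness remainder from Step 1, this gives the stated inequality.

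\textbf{Main obstacle.} There is no deep obstacle; the work is bookkeeping. The points to check are:
\begin{itemize}
\item the standing hypotheses actually give convexity and $L$-smoothness of $\Phi(\nu,\cdot)$ for every $\nu\in\Lambda$, which is what the general smooth concave--convex setting of this subsection provides;
\item the sign conventions, namely that $h_t$ carries $-\Phi$ and that the square is completed with the correct orientation.
\end{itemize}
As a sanity check, in the affine case $V_t(\lambda)=\nabla f(w_t^{md})\lambda$ is linear, so the extra quadratic term $\frac{a_t}{2}\|V_t(\mu-\lambda)\|_2^2$ is exactly the gap between $h_t(\lambda)-h_t(\mu)$ and the linearization $\langle\nabla S_t(\lambda),\lambda-\mu\rangle+R(\lambda)-R(\mu)$. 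The bound is therefore consistent with \Cref{thm-gap}.
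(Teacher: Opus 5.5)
Your proposal is correct and follows the paper's proof: $L$-smoothness of $\Phi(\mu,\cdot)$ and convexity of $\Phi(\lambda,\cdot)$ at $w_t^{md}$, applied along the step $g_t(\lambda)-w_t^{md}=-\frac{\alpha_t^2}{A_t}V_t(\lambda)$, and then regrouping the cross term into $h_t(\lambda)-h_t(\mu)$ plus $\frac{\alpha_t^2}{2A_t}\|V_t(\mu)-V_t(\lambda)\|_2^2$. The paper does the regrouping with the polarization identity and you do it by completing the square, which is the same computation. Your consistency check against \Cref{thm-gap} in the affine case is also correct.
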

\begin{proof}
By the definition of the Nesterov aggregate sequence, we have:
\begin{equation}
\label{eq-ht-general-1}
\begin{aligned}
w_{t+1}^{ag}(\lambda) & = w_t^{md} + \frac{\alpha_t}{A_t} (w_{t+1}(\lambda) - w_t) \\
w_{t+1}^{ag}(\lambda) - w_t^{md} & = -\frac{\alpha_t^2}{A_t} V_t(\lambda) 
\end{aligned}
\end{equation}
Using the $L$-smoothness of $\Phi$ with respect to the primal variable $w$, we upper-bound the function value for any chosen adversary $\mu \in \Lambda$:
\[\Phi(\mu, w_{t+1}^{ag}(\lambda)) \le \Phi(\mu, w_t^{md}) + \langle V_t(\mu), w_{t+1}^{ag}(\lambda) - w_t^{md} \rangle + \frac{L}{2} \vert{}\vert{}w_{t+1}^{ag}(\lambda) - w_t^{md}\vert{}\vert{}_2^2\]

Simultaneously, using the convexity of $\Phi$ with respect to $w$ for the dual variable $\lambda \in \Lambda$, we lower-bound its value:
\[\Phi(\lambda, w_{t+1}^{ag}(\lambda)) \ge \Phi(\lambda, w_t^{md}) + \langle V_t(\lambda), w_{t+1}^{ag}(\lambda) - w_t^{md} \rangle\]

Subtracting the convexity inequality from the smoothness inequality isolates the cross-evaluation gap:
\begin{align*}
\Phi(\mu, w_{t+1}^{ag}(\lambda)) - \Phi(\lambda, w_{t+1}^{ag}(\lambda)) & \le \Phi(\mu, w_t^{md}) - \Phi(\lambda, w_t^{md}) \\
& + \langle V_t(\mu) - V_t(\lambda), w_{t+1}^{ag}(\lambda) - w_t^{md} \rangle + \frac{L}{2} \vert{}\vert{}w_{t+1}^{ag}(\lambda) - w_t^{md}\vert{}\vert{}_2^2
\end{align*}
We now substitute \Cref{eq-ht-general-1} into the inner product term:
\begin{align*}
\langle V_t(\mu) - V_t(\lambda), w_{t+1}^{ag}(\lambda) - w_t^{md} \rangle & = -\frac{\alpha_t^2}{A_t} \langle V_t(\mu) - V_t(\lambda), V_t(\lambda) \rangle \\
& = -\frac{\alpha_t^2}{A_t} \langle V_t(\mu), V_t(\lambda) \rangle + \frac{\alpha_t^2}{A_t} \vert{}\vert{}V_t(\lambda)\vert{}\vert{}_2^2
\end{align*}
By employing the standard algebraic identity $-2\langle a, b \rangle = \vert{}\vert{}a - b\vert{}\vert{}_2^2 - \vert{}\vert{}a\vert{}\vert{}_2^2 - \vert{}\vert{}b\vert{}\vert{}_2^2$, we rewrite the inner product dynamically:
\begin{align*}
-\frac{\alpha_t^2}{A_t} \langle V_t(\mu), V_t(\lambda) \rangle = \frac{\alpha_t^2}{2A_t} \vert{}\vert{}V_t(\mu) - V_t(\lambda)\vert{}\vert{}_2^2 - \frac{\alpha_t^2}{2A_t} \vert{}\vert{}V_t(\mu)\vert{}\vert{}_2^2 - \frac{\alpha_t^2}{2A_t} \vert{}\vert{}V_t(\lambda)\vert{}\vert{}_2^2
\end{align*}
Re-adding the $\frac{\alpha_t^2}{A_t} \vert{}\vert{}V_t(\lambda)\vert{}\vert{}_2^2$ expansion component, the total cross-term equates strictly to:
\begin{align*}
\langle V_t(\mu) - V_t(\lambda), w_{t+1}^{ag}(\lambda) - w_t^{md} \rangle = \frac{\alpha_t^2}{2A_t} \vert{}\vert{}V_t(\mu) - V_t(\lambda)\vert{}\vert{}_2^2 - \frac{\alpha_t^2}{2A_t} \vert{}\vert{}V_t(\mu)\vert{}\vert{}_2^2 + \frac{\alpha_t^2}{2A_t} \vert{}\vert{}V_t(\lambda)\vert{}\vert{}_2^2
\end{align*}

Substitute this resulting identity back into the main cross-evaluation gap inequality and group the variables by $\lambda$ and $\mu$:
\begin{align*}
\Phi(\mu, w_{t+1}^{ag}(\lambda)) - \Phi(\lambda, w_{t+1}^{ag}(\lambda)) & \le \left( \frac{\alpha_t^2}{2A_t} \vert{}\vert{}V_t(\lambda)\vert{}\vert{}_2^2 - \Phi(\lambda, w_t^{md}) \right) - \left( \frac{\alpha_t^2}{2A_t} \vert{}\vert{}V_t(\mu)\vert{}\vert{}_2^2 - \Phi(\mu, w_t^{md}) \right) \\ & + \frac{\alpha_t^2}{2A_t} \vert{}\vert{}V_t(\mu) - V_t(\lambda)\vert{}\vert{}_2^2 + \frac{L}{2} \vert{}\vert{}w_{t+1}^{ag}(\lambda) - w_t^{md}\vert{}\vert{}_2^2
\end{align*}

We recognize by \Cref{eq-moreau-env} that the first two terms correspond to  Moreau Envelope Proxy. Then, we address the terminal $L$-smoothness bounding term  $w_{t+1}^{ag}(\lambda) - w_t^{md} = \frac{\alpha_t}{A_t} (w_{t+1}(\lambda) - w_t)$. Therefore:
\[\frac{L}{2} \vert{}\vert{}w_{t+1}^{ag}(\lambda) - w_t^{md}\vert{}\vert{}_2^2 = \frac{L \alpha_t^2}{2A_t^2} \vert{}\vert{}w_{t+1}(\lambda) - w_t\vert{}\vert{}_2^2 = \frac{L\alpha_t^4}{2A_t^2} \| V_t (\lambda)\|_2^2 \]
Combining all simplified blocks yields the desired bounding theorem exactly.
\end{proof}

\section{KKT condition for quadratic case}
\label{sec-kkt-pure-quadratic}
We recall \Cref{lem-kkt-pure-quadratic}.
\begin{lemma}
Suppose an interior-point algorithm terminates at a strictly feasible output $\mu_k \in\text{relint}(\Delta_J)$, returning a barrier parameter $1 /\tau_k \le \epsilon_\tau$ and dual multipliers $\nu_k \in \mathbb{R}$ and $z_k \in \mathbb{R}_{+}^{J}$ that satisfy the following inexact Karush-Kuhn-Tucker (KKT) conditions:
\begin{itemize}
    \item \textbf{Approximate Stationarity} $\vert{}\vert{}\nabla h_{t}(\mu_{k})-\nu_{k}1-z_{k}\vert{}\vert{}_{\infty}\le\epsilon_{g} $
    \item \textbf{Perturbed Complementary Slackness} $\mu_{k,j} z_{k,j} = 1/\tau_{k}$ for all $j\in\{1,...,J\}$
\end{itemize}
Then, the Frank-Wolfe duality gap at $\mu_k$ is strictly bounded by:
\begin{align*}
\max_{\mu \in \Lambda} \langle \nabla h_t(\mu_k), \mu_k - \mu \rangle \le J\epsilon_{\tau}+2\epsilon_{g}
\end{align*}
\end{lemma}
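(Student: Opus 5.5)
The plan is to decompose the gradient $\nabla h_t(\mu_k)$ using the approximate stationarity condition, and then bound the Frank--Wolfe gap term by term for an arbitrary comparator $\mu \in \Delta_J$ (here $\Lambda = \Delta_J$). Define the residual $r_k := \nabla h_t(\mu_k) - \nu_k \mathbf{1} - z_k$, so that $\|r_k\|_\infty \le \epsilon_g$ and
\[
\nabla h_t(\mu_k) = \nu_k \mathbf{1} + z_k + r_k .
\]
Substituting this into the inner product gives, for every $\mu \in \Delta_J$,
\[
\langle \nabla h_t(\mu_k), \mu_k - \mu \rangle = \nu_k \langle \mathbf{1}, \mu_k - \mu \rangle + \langle z_k, \mu_k \rangle - \langle z_k, \mu \rangle + \langle r_k, \mu_k - \mu \rangle .
\]

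The four terms are handled separately.
\begin{enumerate}
\item The equality-multiplier term vanishes. Both $\mu_k$ and $\mu$ lie in the simplex, so $\langle \mathbf{1}, \mu_k - \mu\rangle = 1 - 1 = 0$.
\item The term $\langle z_k, \mu_k\rangle$ is controlled by perturbed complementary slackness. We have $\langle z_k, \mu_k \rangle = \sum_{j=1}^J \mu_{k,j} z_{k,j} = J/\tau_k \le J\epsilon_\tau$.
\item The term $-\langle z_k, \mu\rangle$ is nonpositive. This holds because $z_k \in \mathbb{R}_+^J$ and $\mu \ge 0$ componentwise, so it can be dropped.
\item The residual term is bounded by H\"older's inequality. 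We get $\langle r_k, \mu_k - \mu\rangle \le \|r_k\|_\infty \|\mu_k - \mu\|_1 \le 2\epsilon_g$, since any two points of the simplex satisfy $\|\mu_k - \mu\|_1 \le \|\mu_k\|_1 + \|\mu\|_1 = 2$.
\end{enumerate}

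Summing the four bounds gives $\langle \nabla h_t(\mu_k), \mu_k - \mu\rangle \le J\epsilon_\tau + 2\epsilon_g$ for every $\mu \in \Delta_J$. Taking the maximum over $\mu$ yields the claim.

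The argument is elementary, and no step is a genuine obstacle. The only point needing care is the sign bookkeeping of $z_k$: the term $\langle z_k,\mu_k\rangle$ must be bounded through complementary slackness, while $\langle z_k,\mu\rangle$ is discarded using nonnegativity. The $\ell_1$-diameter bound of $2$ for the simplex must also be paired with the $\ell_\infty$ stationarity tolerance to produce the factor $2\epsilon_g$.
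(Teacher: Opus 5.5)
Your proof is correct and follows the paper's route. Both write $\nabla h_t(\mu_k)=\nu_k\mathbf{1}+z_k+r_k$, use $\langle\mathbf{1},\mu_k\rangle=1$ together with complementary slackness to get the $J/\tau_k\le J\epsilon_\tau$ term, and control the residual by H\"older. The only cosmetic difference is the last step: the paper evaluates the linear minimum over $\Delta_J$ at a vertex and bounds $\langle r_k,\mu_k\rangle\le\epsilon_g$ and $-\min_j(z_{k,j}+r_{k,j})\le\epsilon_g$ separately, whereas you bound each comparator $\mu$ directly, drop $-\langle z_k,\mu\rangle$, and use the $\ell_1$-diameter $2$ of the simplex.
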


\begin{proof}
Because $h_{t}$ is convex, the true suboptimality is upper-bounded by the Frank-Wolfe gap at $\mu_{k}$:
\begin{align*}
\max_{\mu \in \Lambda} \langle \nabla h_t(\mu_k), \mu_k - \mu \rangle = \langle\nabla h_{t}(\mu_{k}),\mu_{k}\rangle-\min_{\mu \in\Delta_{J}}\langle\nabla h_{t}(\mu_{k}),\mu\rangle
\end{align*}
Let $r_{k}=\nabla h_{t}(\mu_{k})-\nu_{k}1-z_{k}$ denote the stationarity residual, by assumption we have $\vert{}\vert{}r_{k}\vert{}\vert{}_{\infty}\le\epsilon_{g}$.
Substituting $\nabla h_{t}(\mu_{k})=\nu_{k}1+z_{k}+r_{k}$ yields
\begin{align*}
\langle\nabla h_{t}(\mu_{k}),\mu_{k}\rangle & =\nu_{k}\langle \textbf{1},\mu_{k}\rangle+\langle z_{k},\mu_{k}\rangle+\langle r_{k},\mu_{k}\rangle \\
& = \nu_{k}+J/\tau_{k}+\langle r_{k},\mu_{k}\rangle
\end{align*}
Then, we notice that
\begin{align*}
\min_{y\in\Delta_{J}}\langle\nabla h_{t}(\mu_{k}),\mu \rangle& = \min_{y\in\Delta_{J}}\langle\nu_{k}1+z_{k}+r_{k},\mu \rangle = \nu_{k}+\min_{j}(z_{k,j}+r_{k,j}) \\
\max_{\mu \in \Lambda} \langle \nabla h_t(\mu_k), \mu_k - \mu \rangle  & =J/\tau_{k}+\langle r_{k},\mu_{k}\rangle-\min_{j}(z_{k,j}+r_{k,j})
\end{align*}
Since $\vert{}\vert{}r_{k}\vert{}\vert{}_{\infty}\le\epsilon_{g}$, by Holder inequality:
\begin{align*}
\langle r_{k},\mu_{k}\rangle\le\vert{}\vert{}r_{k}\vert{}\vert{}_{\infty}\vert{}\vert{}\mu_{k}\vert{}\vert{}_{1}& \le\epsilon_{g} \\
\forall 1\leq j\leq J, \quad z_{k,j}+r_{k,j}> r_{k,j} \ge-\vert{}\vert{}r_{k}\vert{}\vert{}_{\infty} & \ge-\epsilon_{g} \\
\implies -\min_{j}(z_{k,j}+r_{k,j})& \le\epsilon_{g}
\end{align*}
Since the algorithm ensures $1/\tau_{k}\le\epsilon_{\tau}$, we conclude:
\[ g_{FW}(\mu_{k})\le J\epsilon_{\tau}+2\epsilon_{g}.\]
\end{proof}

\section{Path-following algorithm}
\label{appendix-simplex}

We rewrite the path-following algorithm \Cref{alg:two_phase_pfm} from \citep{nesterov1994interior}.
\begin{algorithm}
\caption{Centered path-following method}
\label{alg:two_phase_pfm}
\begin{algorithmic}[1]
\State \textbf{Initialize:} Set $\mu_0 \in \arg\min R+B$ and $\tau_0 = 1$.
\State \textbf{Phase I:} \text{Initial Centering (Damped Step)}
\While{ $\delta(\mu_k, F_1) > 1/4$}
    \State Compute Newton Step $\Delta \mu_k$ for $(\mu_k, F_1)$ with \Cref{eq-solution-kkt}
    \State Update: $\mu_{k+1} = \mu_k + \frac{1}{1+\delta(\mu_k, F_1)}\Delta \mu_k$ , $k \xleftarrow{} k+1 $
\EndWhile
\State \textbf{Phase II:} \text{ Short-Step Path-Following(Normal Full Step)}
\While{Target accuracy $\epsilon_t$ is not reached}
    \State Update Penalty: $\tau_{k+1} = \tau_k \times \left(1 + \frac{1}{15\sqrt{J}}\right)$
    \State Compute Newton Step $\Delta \mu_k$ for $(\mu_k, F_{\tau_{k+1}})$ with \Cref{eq-solution-kkt}
    \State Update: $\mu_{k+1} = \mu_k + \Delta \mu_k$ (, $k \xleftarrow{} k+1 $
\EndWhile
\State \Return $\mu_k$
\end{algorithmic}
\end{algorithm}
From Nesterov framework, we show in the following Theorem about equality-constrained Newton method.
\begin{theorem}
\label{thm:constrained_newton}
Consider $F:\mathbb{R}_{++}^J \xrightarrow{} \mathbb{R}$ strictly convex and $1$-self-concordant, let $\mu_k $ be a strictly feasible solution and $(\Delta \mu_k,s_k)$ be defined as the solution of the KKT block system:
\begin{equation}
\begin{bmatrix} \nabla^2 F(\mu_k) & \mathbf{1} \\ \mathbf{1}^\top & 0 \end{bmatrix} \begin{bmatrix} \Delta\mu_k \\ s_k \end{bmatrix} = \begin{bmatrix} -\nabla F(\mu_k) \\ 0 \end{bmatrix}
\end{equation}
Let $\omega(\lambda) = \lambda - \ln(1+\lambda)$ and $\lambda_k = \delta(\mu_k, F) $, then the following properties hold for the constrained updates.
\begin{enumerate}
    \item \textbf{Damped Newton Step:} Taking the step $\mu_{k+1} = \mu_k + \frac{1}{1+\lambda_k}\Delta\mu_k$ maintains strict feasibility, and the objective strictly decreases by:
    \begin{equation}
        F(\mu_{k+1}) \le F(\mu_k) - \omega(\lambda_k)
    \end{equation}
    \item \textbf{Full Newton Step:} If $\delta(\mu_k, F) < 1$, taking a full Newton step $\mu_{k+1} = \mu_k + \Delta\mu_k$ maintains strict feasibility, converges quadratically, and the new Newton decrement is bounded by:
    \begin{equation}
        \delta(\mu_{k+1}, F) \le \left(\frac{\delta(\mu_k, F)}{1-\delta(\mu_k, F)}\right)^2
    \end{equation}
\end{enumerate}
\end{theorem}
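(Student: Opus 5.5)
The plan is to reduce both statements to their unconstrained counterparts in \citet{nesterov1994interior} by restricting $F$ to the affine slice $\{\mu : \mathbf{1}^\top\mu = 1\}$. Concretely, I fix a matrix $U\in\mathbb{R}^{J\times(J-1)}$ whose columns form a basis of $\ker \mathbf{1}^\top$, pick any strictly feasible $\mu_k$, and define $\phi(y) := F(\mu_k + Uy)$ on the open convex set $\{y : \mu_k+Uy\in\mathbb{R}^J_{++}\}$. Self-concordance is preserved under affine substitution, since the third-derivative inequality of \Cref{def:self_concordant} is just evaluated along directions $Uh$. Strict convexity is inherited as well, because $U$ is injective. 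So $\phi$ is a strictly convex $1$-self-concordant function of $J-1$ variables.

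The key algebraic step is to identify the KKT block with the Newton system of $\phi$. From the second row, $\mathbf{1}^\top\Delta\mu_k=0$, so $\Delta\mu_k=Uy_k$ for some $y_k$. Multiplying the first row on the left by $U^\top$ kills the $\mathbf{1}s_k$ term and gives $U^\top\nabla^2F(\mu_k)U\,y_k=-U^\top\nabla F(\mu_k)$, that is, $\nabla^2\phi(0)y_k=-\nabla\phi(0)$. Hence $y_k$ is the unconstrained Newton step of $\phi$ at $0$. Moreover $\delta(\mu_k,F)^2=y_k^\top\nabla^2\phi(0)y_k$, which is exactly the squared Newton decrement of $\phi$. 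Every step in $\mu$-space therefore corresponds to a Newton or damped Newton step for $\phi$, and feasibility in $\mathbb{R}^J_{++}$ corresponds to membership in $\operatorname{dom}\phi$.

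With this identification the two claims become classical results. For the damped step, the Dikin ellipsoid property gives that $y_k/(1+\lambda_k)$ has local norm $\lambda_k/(1+\lambda_k)<1$, so the new point stays in the domain; this is strict feasibility. The upper bound $\phi(y)\le\phi(0)+\langle\nabla\phi(0),y\rangle+\omega_\star(\|y\|_0)$ then yields the decrease $\omega(\lambda_k)$ by the standard computation (Nesterov, Lectures, Thm.~5.1.15 / 4.1.12). For the full step with $\lambda_k<1$, feasibility again follows from the Dikin ellipsoid. The quadratic bound $\lambda(y_k)\le(\lambda_k/(1-\lambda_k))^2$ is Nesterov's Theorem 5.2.2 (4.1.14) applied to $\phi$. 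I would transfer this back to $\mu$-space by noting that the decrement of $\phi$ at $y_k$, computed around the base point $\mu_k$, equals the decrement of $F$ at $\mu_{k+1}$ under the same restriction. This holds because the slice is the same affine set regardless of the base point.

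I expect the main obstacle to be the bookkeeping rather than any inequality. Two points need care. First, I must check that the KKT system is solvable and has a unique $\Delta\mu_k$; this follows from $U^\top\nabla^2F U\succ0$, which uses strict convexity (or that $F$ contains a barrier term). Second, I must make sure that the decrement used in the full-step bound is the constrained one and not $\nabla F^\top[\nabla^2F]^{-1}\nabla F$. If Nesterov's quadratic-convergence estimate must instead be rederived, I would reprove it directly for $\phi$ using the Hessian bound $\nabla^2\phi(y)\succeq(1-\|y\|_0)^2\nabla^2\phi(0)$ together with the integral form of $\nabla\phi(y_k)$.
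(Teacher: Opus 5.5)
Your proposal is correct and follows essentially the paper's route. The paper simply cites Nesterov's Theorems 5.1.15 and 5.2.2 together with Boyd--Vandenberghe \S 10.2.3, which is exactly the elimination argument you spell out: feasible-start equality-constrained Newton coincides with unconstrained Newton on $\phi(y)=F(\mu_k+Uy)$, with identical decrements at $\mu_k$ and $\mu_{k+1}$. One caveat, shared with the paper: your Dikin-ellipsoid feasibility step needs $F$ to be closed, i.e.\ $F\to\infty$ at $\partial\mathbb{R}^J_{++}$ as in Nesterov's definition, whereas the paper's Definition~\ref{def:self_concordant} omits this (a strictly convex quadratic would break both feasibility claims); the condition does hold for $F_\tau$ because of its barrier term.
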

The proof can be found found in Theorem 5.1.15, 5.2.2 in \cite{nesterov2018lectures} and Section 10.2.3 in \cite{boyd2004convex}.

If $\nabla^2 F_\tau > 0$, we can efficiently calculate the step solution $(\Delta\mu, s)$ via block elimination:
\begin{equation}
\label{eq-solution-kkt}
\begin{aligned}
s &\leftarrow -\frac{\mathbf{1}^\top (\nabla^2 F_\tau(\mu))^{-1} \nabla F_\tau(\mu)}{\mathbf{1}^\top (\nabla^2 F_\tau(\mu))^{-1} \mathbf{1}}, \\
\Delta\mu &\leftarrow -(\nabla^2 F_\tau(\mu))^{-1} (\nabla F_\tau(\mu) + s\mathbf{1}).
\end{aligned}
\end{equation}

\subsection{Cross-evaluation gap bound}
\label{proof-kkt-upper-bound}
Before the main result, we cite a property of self-concordant-barrier from \citep{nesterov1994interior}.
\begin{lemma}
\label{lem-barrier}
If $B_\Lambda: \operatorname{relint}(\Lambda) \to \mathbb{R}$ is a $\nu$-self-concordant barrier for the compact convex set $\Lambda$. Then for all $\mu \in \operatorname{relint}(\Lambda)$ and $\lambda \in \Lambda$:
\begin{align*}
\langle \nabla B_\Lambda(\mu), \lambda - \mu \rangle \le \nu \\
\Vert{}\nabla B_\Lambda(\mu)\Vert{}_{\nabla^2 B_\Lambda(\mu)}^* \le \sqrt{\nu}
\end{align*}
\end{lemma}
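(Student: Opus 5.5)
The plan is to derive both inequalities directly from \Cref{def:nu_self_concordant_barrier}. The second inequality follows at once from the defining barrier inequality. The first uses the classical one-dimensional argument of \citet{nesterov1994interior} along the segment from $\mu$ to $\lambda$. Throughout, the directions $h$ range over the linear subspace parallel to the affine hull of $\Lambda$. On that subspace $\nabla^2 B_\Lambda(\mu)$ is positive definite, and the dual norm $\|\cdot\|^*_{\nabla^2 B_\Lambda(\mu)}$ is understood relative to it.

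\textbf{Dual norm bound.} By definition,
\[
\|\nabla B_\Lambda(\mu)\|^*_{\nabla^2 B_\Lambda(\mu)} = \sup\{ h^\top \nabla B_\Lambda(\mu) : h^\top \nabla^2 B_\Lambda(\mu) h \le 1\}.
\]
The barrier condition gives $|h^\top \nabla B_\Lambda(\mu)| \le \sqrt{\nu}\,(h^\top\nabla^2 B_\Lambda(\mu) h)^{1/2} \le \sqrt{\nu}$ for every admissible $h$, so the supremum is at most $\sqrt\nu$. Equivalently, this is the Newton-decrement form $\nabla B^\top[\nabla^2 B]^{-1}\nabla B \le \nu$ already recorded after the definition.

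\textbf{Directional bound.} Fix $\mu\in\operatorname{relint}(\Lambda)$ and $\lambda\in\Lambda$, and set $h=\lambda-\mu$ and $x_t=\mu+th$.
\begin{enumerate}
\item First I check that $x_t\in\operatorname{relint}(\Lambda)$ for all $t\in[0,1)$. This is the standard convex-analysis fact that a segment from a relative-interior point to any point of the closure stays in the relative interior except possibly at its endpoint.
\item Define $\varphi(t)=\langle\nabla B_\Lambda(x_t),h\rangle$ for $t\in[0,1)$. It is finite and $C^1$ there, with $\varphi'(t)=h^\top\nabla^2B_\Lambda(x_t)h\ge 0$.
\item If $\varphi(0)\le 0$, then $\langle \nabla B_\Lambda(\mu), \lambda-\mu\rangle\le 0\le \nu$ and we are done.
\item Otherwise $\varphi$ is nondecreasing, hence $\varphi(t)\ge\varphi(0)>0$ on $[0,1)$. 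The barrier inequality at $x_t$ gives $\varphi(t)^2\le \nu\,\varphi'(t)$.
\item Therefore $\frac{d}{dt}\bigl(1/\varphi(t)\bigr)=-\varphi'(t)/\varphi(t)^2\le -1/\nu$. Integrating yields $0<1/\varphi(t)\le 1/\varphi(0)-t/\nu$ for all $t\in[0,1)$.
\item Hence $\varphi(0)<\nu/t$ for every $t<1$. Letting $t\to1$ gives $\langle\nabla B_\Lambda(\mu),\lambda-\mu\rangle=\varphi(0)\le\nu$.
\end{enumerate}

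The only delicate point is the domain issue in the first step. The endpoint $\lambda$ may lie on the relative boundary, where $B_\Lambda$ is undefined, which is why the integration is run only over $t<1$ and the limit is taken at the end. If $\nu=0$, the argument forces $\varphi(0)\le 0$: a positive $\varphi(0)$ would give $\varphi(t)^2\le 0$ in the fourth step, a contradiction. So the bound holds trivially in that case. A second subtlety is that the barrier inequality is only used for directions $h$ in the affine-hull subspace, and $h=\lambda-\mu$ lies in it. Beyond these points, both steps are routine.
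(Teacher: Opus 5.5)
Your proof is correct and matches the paper's approach. The paper gives no argument of its own and simply cites \citet{nesterov1994interior}, and your proof is the standard one from that source: the dual-norm bound is the defining barrier inequality restated, and the directional bound comes from integrating $\varphi^2\le\nu\varphi'$ over $[0,1)$ and letting $t\to 1$ (with $t$ restricted to $(0,1)$ when you write $\varphi(0)<\nu/t$).
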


We recall the main Theorem to prove.
\begin{theorem}[Inner problem Upper bound via KKT block]
Under \Cref{Assumption-linear-convex}, we consider a fixed $\tau \geq 1 $ and  $\mu_k \in  \mathrm{relint}(\Lambda)$. If the KKT block on
on $(\mu_k, F_\tau)$ has a feasible solution $(\Delta \mu_k, s_k)$ and there exists $ \beta \in [0,1]$,
\begin{align*}
\delta(\mu_k,F_{\tau})\leq \beta^2,
\end{align*}
then for all $\lambda \in \Lambda $
\begin{equation}
\epsilon_t(\mu_k,\lambda) \leq 
\frac{\nu}{\tau} + \frac{\sqrt{\nu}}{\tau} \left(\frac{\beta}{1-\beta}\right) + \frac{\omega_*(\beta)}{\tau} + \frac{\beta}{1-\beta} \frac{2G\alpha_t}{\sqrt{\tau A_t}}
\end{equation}
where $\omega_*(\beta) := -\beta - \ln(1-\beta)$.
\end{theorem}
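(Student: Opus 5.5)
Our plan is to compare $\mu_k$ with the exact central point $\mu_\tau^\star := \arg\min_{\mathrm{relint}\,\Lambda} F_\tau$ and to split the cross-evaluation gap into three contributions: a barrier term, a distance term, and a curvature term. We use the first-order optimality of $\mu_\tau^\star$, the decrement bound at $\mu_k$, and the fact that $S_t$ is quadratic.

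\textbf{Step 1 (exact central point).} At $\mu_\tau^\star$, optimality on the affine hull gives $\tau(\nabla S_t + \nabla R)(\mu_\tau^\star) + \nabla B_\Lambda(\mu_\tau^\star) \perp (\lambda-\mu_\tau^\star)$ for all $\lambda\in\Lambda$. Convexity of $R$ gives $R(\mu)-R(\lambda)\le\langle\nabla R(\mu),\mu-\lambda\rangle$. Combining these with \Cref{lem-barrier}, we get
\[
\epsilon_t(\mu_\tau^\star,\lambda)\le \langle \nabla (S_t+R)(\mu_\tau^\star),\mu_\tau^\star-\lambda\rangle = \tfrac1\tau\langle\nabla B_\Lambda(\mu_\tau^\star),\lambda-\mu_\tau^\star\rangle\le \tfrac{\nu}{\tau}.
\]
This produces the first term.

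\textbf{Step 2 (transfer from $\mu_\tau^\star$ to $\mu_k$).} We write
\begin{align*}
\epsilon_t(\mu_k,\lambda)-\epsilon_t(\mu_\tau^\star,\lambda) ={}& \big[R(\mu_k)-R(\mu_\tau^\star)-\langle\nabla S_t(\mu_\tau^\star),\mu_\tau^\star-\mu_k\rangle\big]\\
&+\langle \nabla S_t(\mu_k)-\nabla S_t(\mu_\tau^\star),\mu_k-\lambda\rangle.
\end{align*}
For the first bracket we use $\tau(S_t+R)=F_\tau-B_\Lambda$. Its value is then bounded by $\tfrac1\tau[F_\tau(\mu_k)-F_\tau(\mu_\tau^\star)] + \tfrac1\tau\langle\nabla B_\Lambda(\mu_\tau^\star),\mu_k-\mu_\tau^\star\rangle$, plus a nonpositive Bregman term of $S_t$ that we drop.

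The self-concordance estimates give $F_\tau(\mu_k)-F_\tau(\mu_\tau^\star)\le\omega_\star(\delta)$ and $\|\mu_k-\mu_\tau^\star\|_{\mu_\tau^\star}\le \delta/(1-\delta)$ in the local norm of $F_\tau$. Here $\delta\le\beta^2\le\beta$ and $\omega_\star$ is increasing, so these bounds hold with $\delta$ replaced by $\beta$. Since $\nabla^2F_\tau\succeq\nabla^2B_\Lambda$, Cauchy--Schwarz and \Cref{lem-barrier} bound the barrier inner product by $\sqrt\nu\,\beta/(1-\beta)$. Together these give the terms $\omega_\star(\beta)/\tau$ and $\frac{\sqrt\nu}{\tau}\frac{\beta}{1-\beta}$.

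\textbf{Step 3 (quadratic cross term).} Write $M=\nabla f(w_t^{md})$. Then
\[
\nabla S_t(\mu_k)-\nabla S_t(\mu_\tau^\star) = \tfrac{\alpha_t^2}{A_t}M^\top M(\mu_k-\mu_\tau^\star),
\]
so the cross term equals $\tfrac{\alpha_t^2}{A_t}\langle M(\mu_k-\mu_\tau^\star), M(\mu_k-\lambda)\rangle$. We bound $\|M(\mu_k-\lambda)\|\le 2G$, using that $\|M\mu\|\le G$ for every $\mu\in\Lambda$. We also use $\nabla^2F_\tau\succeq \tau\frac{\alpha_t^2}{A_t}M^\top M$, which follows from convexity of $R+B/\tau$ (guaranteed by compatibility in \Cref{Assumption-linear-convex}). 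This gives
\[
\tfrac{\alpha_t}{\sqrt{A_t}}\|M(\mu_k-\mu_\tau^\star)\|\le \tau^{-1/2}\|\mu_k-\mu_\tau^\star\|_{\mu_\tau^\star}\le \tau^{-1/2}\tfrac{\beta}{1-\beta}.
\]
Multiplying the two bounds yields $\frac{\beta}{1-\beta}\frac{2G\alpha_t}{\sqrt{\tau A_t}}$. Summing Steps 1 to 3 then gives the claim.

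\textbf{Main obstacle.} The delicate point is the self-concordance step in the equality-constrained setting. We must justify that the decrement defined through the KKT block of \Cref{def-KKT} controls both $F_\tau(\mu_k)-F_\tau^\star$ and the local distance $\|\mu_k-\mu_\tau^\star\|$. This requires $\delta<1$, which is where $\delta\le\beta^2$ with $\beta<1$ is used.

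We must also decide whether to measure the distance in the norm at $\mu_k$ or at $\mu_\tau^\star$, since this must be chosen consistently with where $\nabla B_\Lambda$ is evaluated. We would restrict $F_\tau$ to the affine hull, so that the unconstrained Nesterov bounds (Theorem 5.1.13-type estimates, $\|x-x^\star\|_{x}\le\delta/(1-\delta)$) apply verbatim. If needed, we would switch the base point of the barrier inner product to $\mu_k$ to match those estimates.
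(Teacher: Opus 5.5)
Your proposal is correct and is essentially the paper's argument. Both compare $\mu_k$ with the exact minimizer $\mu^\star$ of $F_\tau$ on $\Lambda$, use its first-order optimality to extract the $\nu/\tau$ term from \Cref{lem-barrier}, bound the quadratic cross term via $\tau\frac{\alpha_t^2}{A_t}M^\top M\preceq\nabla^2F_\tau$ and $\|M(\mu_k-\lambda)\|_2\le 2G$, and control the remainder by $\frac{1}{\tau}\bigl(F_\tau(\mu_k)-F_\tau(\mu^\star)\bigr)\le\omega_\star(\beta)/\tau$; the paper writes this last step as $D_R(\mu_k,\mu^\star)\le\frac{1}{\tau}D_{F_\tau}(\mu_k,\mu^\star)$, which is the same as your dropping of the Bregman terms of $S_t$ and $B_\Lambda$.

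Your base-point worry goes away if you keep everything at $\mu^\star$, as the paper does, since $\omega(\|\mu_k-\mu^\star\|_{\nabla^2F_\tau(\mu^\star)})\le F_\tau(\mu_k)-F_\tau(\mu^\star)\le\omega_\star(\delta)\le\omega\bigl(\delta/(1-\delta)\bigr)$ with $\omega(s)=s-\ln(1+s)$ increasing. One small correction: the barrier inner product in your Step 2 should read $\langle\nabla B_\Lambda(\mu^\star),\mu^\star-\mu_k\rangle$, obtained from convexity of $B_\Lambda$; this sign slip is harmless because you then bound it by Cauchy--Schwarz.
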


\begin{proof}
We recall that
\begin{align*}
\epsilon_t(\lambda,\mu) & = \langle \nabla S_t(\lambda), \lambda - \mu \rangle + R(\lambda) - R(\mu) \\
F_{\tau} & = \tau (S_t + R) + B
\end{align*}
We can rewrite for $\lambda, \mu^\star \in \Lambda$
\begin{equation}
\label{eq-thm-kkt-step1}
\begin{aligned}
\epsilon_t(\mu_k,\lambda) & = \langle \nabla S_t(\mu_k), \mu_k - \lambda \rangle + R(\mu_k) - R(\lambda) \\
& =\langle \nabla S_t(\mu_k) - \nabla S_t(\mu^{\star}), \mu_k - \lambda \rangle + \langle \nabla S_t(\mu^{\star}), \mu_k - \lambda \rangle + R(\mu_k) - R(\lambda).
\end{aligned}
\end{equation}
Now we fix $\mu^\star := \arg\min_{\mu \in \Lambda} F_{\tau}$, since $\mu^{\star},\mu_k \in \mathrm{relint}(\Lambda)$, for all $\lambda \in \Lambda, \mu_k - \lambda $ is a feasible position and therefore $\tau > 0, \lambda \in \Lambda$:
\begin{align*}
& \langle  \nabla F_{\tau}(\mu^\star), \mu_k -\lambda \rangle = 0 \\
\implies \quad & \langle \nabla S_t(\mu^\star), \mu_k - \lambda \rangle  = -\langle \nabla R(\mu^\star), \mu_k - \lambda \rangle - \frac{1}{\tau} \langle \nabla B(\mu^\star), \mu_k - \lambda \rangle
\end{align*}
We notice that
\begin{align*}
    R(\mu_k) - R(\lambda) -\langle \nabla R(\mu^\star), \mu_k - \lambda \rangle  = D_R(\mu_k,\mu^\star) - D_R(\lambda,\mu^\star) 
\end{align*}
Therefore, \Cref{eq-thm-kkt-step1} can be rewritten as 
\begin{equation}
\label{eq-thm-kkt-step2}
\begin{aligned}
\epsilon_t(\mu_k,\lambda) & = \langle \nabla S_t(\mu_k) - \nabla S_t(\mu^{\star}), \mu_k - \lambda \rangle - \frac{1}{\tau} \langle \nabla B(\mu^\star), \mu_k - \lambda \rangle + D_R(\mu_k,\mu^\star) - D_R(\lambda,\mu^\star) \\
& \leq \langle \nabla S_t(\mu_k) - \nabla S_t(\mu^{\star}), \mu_k - \lambda \rangle - \frac{1}{\tau} \langle \nabla B(\mu^\star), \mu_k - \lambda \rangle + D_R(\mu_k,\mu^\star)  .
\end{aligned}
\end{equation}

For the first quadratic term, because $\nabla S_t(\mu_k) - \nabla S_t(\mu^\star) = H_Q(\mu_k - \mu^\star)$ where $H_Q = \frac{\alpha_t^2}{A_t} \nabla f(w_t^{md})^{\top}\nabla f(w_t^{md})$.

We use the generalized Cauchy-Schwarz inequality. Since $\tau H_Q \preceq \nabla^2 F_\tau$, we have 
\[\Vert{}\mu_k - \mu^\star\Vert{}_{H_Q} \le \frac{1}{\sqrt{\tau}} \Vert{}\mu_k - \mu^\star\Vert{}_{\nabla^2 F_\tau(\mu^\star)} \le \frac{1}{\sqrt{\tau}} \frac{\beta}{1-\beta}.\] 
And from \Cref{Assumption-linear-convex}, $\| \nabla f(w_t^{md})\lambda\|_2 \leq G$, 
\begin{align*}
\| \mu_k - \lambda \|_{H_Q} = \frac{\alpha_t}{\sqrt{A_t}} \| \nabla f(w_t^{md}) (\mu_k - \lambda)\|_2 \leq \frac{2G\alpha_t}{\sqrt{A_t}}
\end{align*}
Therefore,
\[\langle H_Q(\mu_k - \mu^\star), \mu_k - \lambda \rangle \le \frac{\beta}{1-\beta} \frac{2G\alpha_t}{\sqrt{\tau A_t}} \]
For the second Barrier term, from \Cref{lem-barrier}, we notice that for any arbitrary $\mu^{\star} \in \mathrm{relint}(\Lambda),\lambda \in \Lambda$, 
\begin{align*}
\langle \nabla B(\mu^\star), \lambda  - \mu^\star \rangle \le \nu
\end{align*}
For the remaining difference $- \frac{1}{\tau} \langle \nabla B(\mu^\star), \mu_k - \mu^\star \rangle$, by self-concordance theory \cite{nesterov1994interior}, Theorem 5.1.8 (Equation 5.1.28),  if $\delta(\mu_k,F_{\tau}) \leq \beta < 1 $ the distance to the exact optimum $\mu^\star $ strictly is bounded by
\begin{align*}
\| \mu_k - \mu^\star\|_{\nabla^2 F_{\tau}(\mu^{\star)}} \leq \frac{\beta}{1-\beta},
\end{align*}
and the dual norm of $\| \nabla B(\mu^{\star}) \|_\ast \leq \sqrt{\nu}$ (\Cref{lem-barrier}),  therefore
\[ - \frac{1}{\tau} \langle \nabla B(\mu^{\star}), \mu_k - \lambda \rangle  \le \frac{\nu}{\tau} + \frac{\sqrt{\nu}}{\tau} \frac{\beta}{1-\beta}\]

For the last term, we notice , $D_{F_\tau}(\mu_k, \mu^\star) \le \omega_*(\beta) $ from \citep{nesterov1994interior}, therefore
\begin{align*}
D_R(\mu_k, \mu^\star) \le \frac{1}{\tau} D_{F_\tau}(\mu_k, \mu^\star) \le \frac{\omega_*(\beta)}{\tau}
\end{align*}

\end{proof}

\subsection{Phase I complexity}

\begin{theorem}[Phase I Complexity]
\label{thm-phase1}
Let $\tau_0 = 1$. Under \Cref{Assumption-linear-convex,ass:init}, starting from $\mu_0 = \frac{1}{J}\mathbf{1}$, the Damped Newton method strictly reaches the short-step neighborhood $\mathcal{N}(\beta) = \{\mu \in \operatorname{relint}(\Delta_J) \mid \delta(\mu, F_1) \le \beta\}$ with $\beta \leq 1$ in a bounded number of iterations:
\begin{align*}
N_{\text{Phase I}} \le \frac{1}{\omega(1/4)} \left(  \frac{G^2 \alpha_t^2}{2A_t} +  2 G \|w_t^{md} - w_\star\|_2 + S_\star \right)
\end{align*}
\end{theorem}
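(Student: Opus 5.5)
The argument is the standard damped-Newton potential count. By \Cref{thm:constrained_newton}, while $\delta(\mu_k,F_1) > 1/4$ every damped step lowers $F_1$ by at least $\omega(1/4)$, because $\omega$ is increasing on $[0,\infty)$. Summing over the Phase I iterations gives
\[
N_{\text{Phase I}}\,\omega(1/4) \le F_1(\mu_0) - \min_{\Delta_J} F_1 .
\]
So it suffices to bound the initial gap $F_1(\mu_0)-F_1(\mu^\star_1)$, where $\mu_1^\star=\arg\min F_1$ and $F_1 = S_t + R + B_\Lambda$. Recall that Algorithm~\ref{alg:two_phase_pfm} initializes at $\mu_0\in\arg\min(R+B)$; for the uniform point in the statement, the symmetric choices of $R$ and $B$ make the two coincide.

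\textbf{Splitting the gap.} I would write
\[
F_1(\mu_0)-F_1(\mu) = \big[S_t(\mu_0)-S_t(\mu)\big] + \big[(R+B)(\mu_0)-(R+B)(\mu)\big].
\]
Since $\mu_0$ minimizes $R+B$, the second bracket is $\le 0$ for every $\mu$. It therefore remains to bound $\sup_{\mu\in\Delta_J}\big(S_t(\mu_0)-S_t(\mu)\big)$. I expand $S_t(\lambda)=\frac{\alpha_t^2}{2A_t}\|\nabla f(w_t^{md})\lambda\|_2^2-\lambda^\top f(w_t^{md})$ and bound the two parts separately.

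\textbf{Quadratic part.} The quadratic part of $S_t(\mu_0)$ is nonnegative for $-S_t(\mu)$ and at most $\frac{\alpha_t^2}{2A_t}G^2$ for $S_t(\mu_0)$, by the gradient bound in \Cref{Assumption-linear-convex}. This gives the first term of the claim.

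\textbf{Linear part.} The linear part contributes $(\mu-\mu_0)^\top f(w_t^{md})$. Since $\mu-\mu_0$ has zero sum, this is at most $\max_{i,j}|f_i(w_t^{md})-f_j(w_t^{md})|$. I then move to $w_\star$:
\[
|f_i(w_t^{md})-f_j(w_t^{md})| \le |f_i(w_\star)-f_j(w_\star)| + |f_i(w_t^{md})-f_i(w_\star)| + |f_j(w_t^{md})-f_j(w_\star)|.
\]
The first term is at most $S_\star$ by \Cref{ass:init}. Each of the other two is at most $G\|w_t^{md}-w_\star\|_2$, using that $f_j=e_j^\top f$ is $G$-Lipschitz (take $\lambda=e_j\in\Delta_J$ in \Cref{Assumption-linear-convex}). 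Together these give $2G\|w_t^{md}-w_\star\|_2+S_\star$. Adding the quadratic part and dividing by $\omega(1/4)$ yields the stated bound.

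\textbf{Main obstacle.} The step needing the most care is justifying that the $R+B$ bracket is nonpositive. This holds exactly when $\mu_0$ is the minimizer of $R+B$ on $\Delta_J$, as in the algorithm's initialization. For the Burg barrier with a permutation-symmetric $R$ (quadratic centered at $\frac1J\mathbf 1$, or negative entropy), strict convexity plus symmetry places that minimizer at $\frac1J\mathbf 1$, which reconciles it with the uniform start. I would state this explicitly. If $R$ were not symmetric, I would instead keep the algorithm's $\mu_0$ and note that the bound is unchanged.
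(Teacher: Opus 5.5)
Your proposal is correct and follows the paper's proof essentially step for step. The common steps are: the $\omega(1/4)$ decrease per damped Newton step from \Cref{thm:constrained_newton}; the split of $F_1(\mu_0)-F_1(\mu_1^\star)$, with the $R+B$ bracket discarded because $\mu_0\in\arg\min(R+B)$; the bound $\frac{G^2\alpha_t^2}{2A_t}$ on the quadratic part; and the triangle inequality through $w_\star$ with \Cref{ass:init} for the linear part. Two of your details are cleaner than the paper. Your zero-sum bound $(\mu-\mu_0)^\top f(w_t^{md})\le\max_{i,j}|f_i(w_t^{md})-f_j(w_t^{md})|$ avoids the inconsistent factor of $2$ the paper writes at that step. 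Your symmetry argument identifying $\arg\min(R+B)$ with $\frac{1}{J}\mathbf{1}$ makes explicit the reconciliation the paper leaves implicit between the statement's uniform start and the algorithm's initialization.
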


\begin{proof}

First we recall that $S_t(\mu)$ is quadratic and $F_1$ is strictly $1$-self-concordant. By \Cref{thm:constrained_newton}, we know that for the each damped Newton step $\mu_{\text{new}} = \mu + \frac{1}{1+\delta(\mu)}\Delta\mu$, we have
\begin{align*}
F_1(\mu) - F_1(\mu_{\text{new}}) \ge \omega(\lambda_k) \geq \omega(1/4)
\end{align*}
where $\omega(\beta) = \beta - \ln(1+\beta) > 0$.
Consequently, the maximum number of damped iterations required to enter the neighborhood is strictly bounded by the total initial suboptimality gap divided by this minimum step decrease:
\begin{align*}
N_{\text{Phase I}} \le \frac{F_1(\mu_0) - F_1(\mu_1^\star)}{\omega(1/4)}
\end{align*}
where $\mu_1^\star = \arg\min_{\mu \in \Delta_J} F_1(\mu)$. We notice that,
\[F_1(\mu_0) - F_1(\mu_1^\star) = \Big[ S_t(\mu_0) - S_t(\mu_1^\star) \Big] + \Big[ (R+B)(\mu_0) - (R+B)(\mu_1^*) \Big].\]
Since $\mu_0 \in \arg\min R+B$, we have
\begin{align*}
(R+B)(\mu_0) - (R+B)(\mu_1^\star) & \le 0 \\
F_1(\mu_0) - F_1(\mu_1^\star) &  \le S_t(\mu_0) - S_t(\mu_1^\star).
\end{align*}
We recall that :
\begin{align*}
S_t(\lambda) & = \frac{\alpha_t^2}{2A_t} \|  \nabla f(w_t^{md}) \lambda \|_2^2  - \lambda^{\top} f(w_t^{md}) \geq  - \lambda^{\top} f(w_t^{md})  \\
 S_t(\mu_0) - S_t(\mu_1^\star) & \leq  \frac{\alpha_t^2}{2A_t} \|  \nabla f(w_t^{md}) \mu_0 \|_2^2  + (\mu_1^\star - \mu_0)^{\top} f(w_t^{md}).
\end{align*}
We apply triangular inequality and obtain that
\begin{align*}
(\mu_1^\star - \mu_0)^{\top} f(w_t^{md}) \leq 2 \max_{j,k}|f_j(w_t^{md}) - f_k(w_t^{md})| \leq 2 G \|w_t^{md} - w_\star\|_2 + S_\star
\end{align*}
and we conclude that 
\begin{align*}
 \frac{\alpha_t^2}{2A_t} \|  \nabla f(w_t^{md}) \mu_0 \|_2^2 & \leq \frac{G^2 \alpha_t^2}{2A_t} \\
 F_1(\mu_0) - F_1(\mu_1^\star) & \leq \frac{G^2 \alpha_t^2}{2A_t} +  2 G \|w_t^{md} - w_\star\|_2 + S_\star.
\end{align*}
\end{proof}

\subsection{Phase II complexity}
We recall that $F_{\tau_k}(\mu) := \tau_k \Big(S_t(\mu)+ R(\mu)\Big) + B(\mu)$  where $B(\mu) = -\sum_{j=1}^J \ln(\mu_j)$.

\begin{theorem}[Phase II Complexity]
\label{thm-phase2}
Suppose the current iterate $\mu_k \in \operatorname{relint}(\Delta_J)$ satisfies the Newton decrement neighborhood condition 
\[\delta(\mu_k, F_{\tau_k}) \leq 1/4,\] 
If we update $\tau_{k+1} = r \tau_k$ with $r = 1 + \frac{1}{15\sqrt{J}}$, then taking exactly a full Newton step guarantees the new iterate strictly remains in the neighborhood: $\delta(\mu_{k+1},F_{\tau_{k+1}}) \le \frac{1}{4}$.
\end{theorem}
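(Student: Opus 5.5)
The proof follows the classical short-step path-following argument of \citet{nesterov1994interior}, adapted to the equality constraint $\mathbf{1}^\top\mu=1$. It has two steps. First, we control how much the Newton decrement grows when the penalty is increased from $\tau_k$ to $\tau_{k+1}=r\tau_k$ at the fixed point $\mu_k$. Second, we apply the quadratic convergence estimate of \Cref{thm:constrained_newton} (full Newton step) to $F_{\tau_{k+1}}$ at $\mu_k$.

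\textbf{Step 1.} Write $\Pi$ for the restriction to the tangent space $\{h:\mathbf{1}^\top h=0\}$. The constrained Newton decrement equals the dual local norm of the projected gradient,
\[
\delta(\mu,F)=\max_{\mathbf{1}^\top h=0,\ h\neq 0}\frac{-\langle\nabla F(\mu),h\rangle}{\|h\|_{\nabla^2F(\mu)}} .
\]
Put $\Psi:=S_t+R$, so that $F_{\tau_{k+1}}=rF_{\tau_k}-(r-1)B$. Then
\[
\nabla F_{\tau_{k+1}}(\mu_k)=r\,\nabla F_{\tau_k}(\mu_k)-(r-1)\nabla B(\mu_k),
\]
and $\nabla^2F_{\tau_{k+1}}(\mu_k)\succeq\nabla^2F_{\tau_k}(\mu_k)$ because $r\ge1$ and $\nabla^2\Psi\succeq0$ (here I use the compatibility assumption, so $\tau\Psi+B$ is convex for every $\tau$). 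The larger Hessian only shrinks dual norms, so
\[
\delta(\mu_k,F_{\tau_{k+1}})\le r\,\delta(\mu_k,F_{\tau_k})+(r-1)\,\|\nabla B(\mu_k)\|^*_{\nabla^2F_{\tau_k}(\mu_k)} .
\]
Since $\nabla^2F_{\tau_k}\succeq\nabla^2B$, the last dual norm is at most $\|\nabla B(\mu_k)\|^*_{\nabla^2B(\mu_k)}\le\sqrt{\nu}=\sqrt{J}$ by \Cref{lem-barrier}; the negative Burg entropy has $\nu=J$. Restricting to the tangent space can only decrease these dual norms, so the projection causes no trouble. We obtain
\[
\delta(\mu_k,F_{\tau_{k+1}})\le \Bigl(1+\tfrac{1}{15\sqrt J}\Bigr)\tfrac14+\tfrac{1}{15}\le \tfrac{5}{15}\cdot\tfrac{1}{\sqrt J}\cdot\tfrac{1}{4}+\tfrac14+\tfrac1{15}<0.34 .
\]

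\textbf{Step 2.} $F_{\tau_{k+1}}$ is strictly convex and $1$-self-concordant (\Cref{Assumption-linear-convex}), and $\delta(\mu_k,F_{\tau_{k+1}})<1$. Hence \Cref{thm:constrained_newton}(2) applies to the full step $\mu_{k+1}=\mu_k+\Delta\mu_k$. It keeps $\mu_{k+1}$ strictly feasible and gives
\[
\delta(\mu_{k+1},F_{\tau_{k+1}})\le\Bigl(\tfrac{0.34}{0.66}\Bigr)^2\approx0.265 .
\]

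\textbf{Main obstacle.} This value exceeds $1/4$. So the constants in Step 1 must be tightened, and I expect this to be the delicate part. The target is $\delta(\mu_k,F_{\tau_{k+1}})\le 1/3$, since then $(1/3)/(2/3)=1/2$ and $(1/2)^2=1/4$.

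The slack comes from the first term. Rather than bounding $r\delta\le r/4$, I would expand
\[
r\cdot\tfrac14+(r-1)\sqrt J=\tfrac14+\tfrac{1}{60\sqrt J}+\tfrac{1}{15}.
\]
For $J\ge1$ this is at most $\tfrac14+\tfrac1{60}+\tfrac1{15}=\tfrac{20}{60}=\tfrac13$ exactly. Therefore $\delta(\mu_k,F_{\tau_{k+1}})\le 1/3$, and Step 2 gives $\delta(\mu_{k+1},F_{\tau_{k+1}})\le\bigl((1/3)/(2/3)\bigr)^2=1/4$.

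What remains is to check carefully the tangent-space characterization of $\delta$ and the monotonicity of the dual norm under the Hessian ordering.
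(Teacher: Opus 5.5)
Your proposal is correct and is essentially the paper's argument: the paper writes $\delta(\mu_k,F_{\tau_{k+1}})^2=-\Delta\mu_k^\top\nabla F_{\tau_{k+1}}(\mu_k)$, splits it as $r\bigl(-\Delta\mu_k^\top\nabla F_{\tau_k}(\mu_k)\bigr)+(r-1)\Delta\mu_k^\top\nabla B(\mu_k)$, and applies Cauchy--Schwarz with the same Hessian orderings. This gives exactly your dual-norm/triangle-inequality bound $\delta(\mu_k,F_{\tau_{k+1}})\le r\beta+(r-1)\sqrt J$, after which the paper applies the full-Newton-step estimate of \Cref{thm:constrained_newton} as you do. Your ``main obstacle'' is not real: the first chain was only loose arithmetic (its middle expression equals $0.4$ at $J=1$), and the direct evaluation $\tfrac14+\tfrac{1}{60\sqrt J}+\tfrac1{15}\le\tfrac13$ that you give afterwards is precisely the paper's $\beta+\gamma(1+\beta)=\tfrac13$.
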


\begin{proof}
We recall that
\begin{align*}
F_{\tau_{k+1}}(\mu) = r \tau_k \Big( S_t(\mu) + R(\mu) \Big) + B(\mu) = r F_{\tau_k}(\mu) - (r - 1)B(\mu)
\end{align*}

Taking the gradient yields $\nabla F_{\tau_{k+1}}(\mu) = r \nabla F_{\tau_k}(\mu) - (r - 1)\nabla B(\mu)$.
Furthermore, because $S_t+R$ is convex, we have the two following inequalities:
\begin{align*}
\nabla^2 F_{\tau_{k+1}}(\mu) \succeq \nabla^2 F_{\tau_k}(\mu) \quad \text{and} \quad \nabla^2 F_{\tau_{k+1}}(\mu) \succeq \nabla^2 B(\mu)
\end{align*}
Let $(\Delta \mu_k,s_k)$ be the solution of the KKT system $(\mu_k, F_{\tau_{k+1}})$ \Cref{def-KKT}. The intermediate Newton decrement can be rewritten
\begin{align*}
\delta_{\text{shift}}^2 & : = \delta(\mu_k, F_{\tau_{k+1}})^2 \\
&  = \Delta \mu_k^\top \nabla^2 F_{\tau_{k+1}}(\mu_k) \Delta \mu_k \\
& = -\Delta \mu_k^\top \nabla F_{\tau_{k+1}}(\mu_k) = \underbrace{r \left( -\Delta \mu_k^\top \nabla F_{\tau_k}(\mu_k) \right)}_{\text{Term A}} + \underbrace{(r - 1)\Delta \mu_k^\top \nabla B(\mu_k)}_{\text{Term B}}
\end{align*}

On a hand, let $(\nu_k,s_k)$ be the solution of the KKT system with $(\mu_k, F_{\tau_k})$ \Cref{def-KKT}. By Cauchy-Schwarz,
\begin{align*}
-\Delta \mu_k^\top \nabla F_{\tau_k}(\mu_k) & = \Delta \mu_k^\top \nabla^2 F_{\tau_k}(\mu_k) v_k  \le \Vert{}\Delta \mu_k\Vert{}_{\nabla^2 F_{\tau_k}} \Vert{}v_k\Vert{}_{\nabla^2 F_{\tau_k}}
\end{align*}
By the assumption that the previous step was centered, 
\begin{align*}
\Vert{}v_k\Vert{}_{\nabla^2 F_{\tau_k}} & = \delta(\mu_k, F_{\tau_k}) \le \beta \\
\Vert{}\Delta \mu_k\Vert{}_{\nabla^2 F_{\tau_k}} & \le \Vert{}\Delta \mu_k\Vert{}_{\nabla^2 F_{\tau_{k+1}}} = \delta_{\text{shift}}.
\end{align*}
Combining all the equation, the Term $A$ is smaller than $ r \beta \delta_{\text{shift}}$.

On the other hand, we have 
\begin{align*}
\Delta \mu_k^\top \nabla B(\mu_k) \le \Vert{}\Delta \mu_k\Vert{}_{\nabla^2 B} \Vert{}\nabla B(\mu_k)\Vert{}^*_B
\end{align*}

Because $\nabla^2 B \preceq \nabla^2 F_{\tau_{k+1}}$, the first term is bounded by $\delta_{\text{shift}}$. By the fundamental property of a $J$-self-concordant barrier (\cite{nesterov2018lectures} Definition 5.3.1), its local dual norm is globally bounded: $\Vert{}\nabla B(\mu_k)\Vert{}^*_B \le \sqrt{J}$. Thus, $\text{Term B} \le (r-1)\sqrt{J}\delta_{\text{shift}}$.

Substituting both terms back into the expansion gives
\begin{align*}
\delta_{\text{shift}} & \le r\beta + (r-1)\sqrt{J} \\
 & = \left(1 + \frac{\gamma}{\sqrt{J}}\right)\beta + \gamma \le \beta + \gamma(1 + \beta)
\end{align*}
Setting $\beta = 1/4, \gamma = 1/15$, we obtain $\delta_{\text{shift}} \le 1/3 < 1$. Because $F_{\tau_{k+1}}$ is strictly $1$-self-concordant, and the intermediate error is safely $< 1$, classical single-step Newton convergence (\Cref{thm:constrained_newton}) applies. Exactly one normal, undamped Newton step drops the decrement quadratically:
\begin{align*}
\delta(\mu_{k+1}, F_{\tau_{k+1}}) \le \left(\frac{\delta_{\text{shift}}}{1 - \delta_{\text{shift}}}\right)^2 \le \left(\frac{1/3}{1 - 1/3}\right)^2 = \frac{1}{4} = \beta
\end{align*}
\end{proof}

\subsection{Overall complexity}
We recall the result to prove :
\begin{corollary}
Suppose that $\| w_{\star} - w_1\|_2 \leq D$.  By fixing $\beta = \frac{1}{4}$, $\epsilon_t := \frac{L}{t^4}$, under \Cref{Assumption-linear-convex}, we have
\begin{align*}
N_{\text{Phase I}} + N_{\text{Phase II}} = \mathcal{O} \left( G + \sqrt{J} \log \Big( J G t \Big)  \frac{G^2 }{L} +  2 G \sqrt{D^2 + \log(t)} + S_\star  \right)
\end{align*}
\end{corollary}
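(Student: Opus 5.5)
We bound the two phases of \Cref{alg:two_phase_pfm} separately and then add them. For Phase I we invoke \Cref{thm-phase1}. It gives
\[
N_{\text{Phase I}} \le \frac{1}{\omega(1/4)}\Big(\frac{G^2\alpha_t^2}{2A_t} + 2G\|w_t^{md}-w_\star\|_2 + S_\star\Big).
\]
The first term is handled by \Cref{lem-step-size}: since $2L\alpha_t^2\le A_t$, we get $\frac{\alpha_t^2}{A_t}\le \frac{1}{2L}$, so this term is $\mathcal{O}(G^2/L)$.

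The distance $\|w_t^{md}-w_\star\|_2$ is controlled through the outer accelerated analysis. The telescoped inequality \eqref{eq-main-acceleration-results} keeps the potential term $\frac12\|w_\star-w_{t+1}\|_2^2$. Taking $w_\star$ to be the saddle-point minimizer, the gap terms $A_t\max_\lambda\Phi(\lambda,w^{ag}_{t+1}) - \sum_s\alpha_s\Phi(\lambda_{s+1},w_\star)$ are nonnegative. This yields
\[
\tfrac12\|w_\star-w_{t+1}\|_2^2 \le \tfrac12 D^2 + \sum_{s\le t} A_s\epsilon_s .
\]
With $\epsilon_s = L/s^4$ and $A_s=\mathcal{O}(s^2/L)$, also from \Cref{lem-step-size}, the error sum is $\sum_s \mathcal{O}(1/s^2)$. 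This is a constant, or at worst $\mathcal{O}(\log t)$ if the logarithmic correction in the upper bound for $A_s$ is carried. Hence $\|w_t-w_\star\|_2 = \mathcal{O}(\sqrt{D^2+\log t})$. The same bound holds for $w_t^{ag}$ by convex combination and induction, and therefore for $w_t^{md}$, which is a convex combination of $w_t^{ag}$ and $w_t$.

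For Phase II, \Cref{thm-phase2} shows that every short step keeps $\delta(\mu_k,F_{\tau_k})\le 1/4$ and multiplies $\tau$ by $1+\frac{1}{15\sqrt J}$. We then apply \Cref{thm-inner-problem-core} with $\beta=1/2$, so that $\beta^2=1/4$, and $\nu=J$ for the Burg barrier. The resulting bound on $\max_\lambda \epsilon_t(\mu_k,\lambda)$ is
\[
\mathcal{O}\Big(\frac{J}{\tau}\Big) + \mathcal{O}\Big(\frac{G\alpha_t}{\sqrt{\tau A_t}}\Big) \le \mathcal{O}\Big(\frac{J}{\tau} + \frac{G}{\sqrt{L\tau}}\Big).
\]
To push this below the target $\epsilon_t=L/t^4$, it suffices to take $\tau \gtrsim J t^4/L + G^2 t^8/L^3$. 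Since $L\ge1$, this means $\log\tau = \mathcal{O}(\log(JGt))$. Starting from $\tau_0=1$, the number of multiplicative steps is
\[
N_{\text{Phase II}} \le \frac{\log\tau}{\log(1+1/(15\sqrt J))} = \mathcal{O}\big(\sqrt J\log(JGt)\big).
\]
Summing the two phase bounds gives the claimed $\mathcal{O}(\cdot)$ expression.

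The main obstacle is the uniform control of $\|w_t^{md}-w_\star\|_2$. It requires reusing the outer potential argument with inexact $\epsilon_s$ and confirming that dropping the function-gap terms is legitimate. That step relies on $w_\star$ being a minimizer of $\max_\lambda\Phi(\lambda,\cdot)$, so that $\max_\lambda\Phi(\lambda,w^{ag})\ge\max_\lambda\Phi(\lambda,w_\star)\ge\Phi(\lambda_{s+1},w_\star)$. A second point to check is the mismatch between the neighborhood radius $\beta$ used in \Cref{thm-phase2} and the $\beta^2$ condition in \Cref{thm-inner-problem-core}. We resolve it by using $\beta=1/2$ in the latter, which only changes constants.
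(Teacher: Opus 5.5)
Your proposal is correct and follows the paper's argument for Phase I. You use \Cref{thm-phase1} and control $\|w_t^{md}-w_\star\|_2$ through the telescoped potential inequality, dropping the gap terms because $w_\star$ minimizes $\max_\lambda\Phi(\lambda,\cdot)$, together with the convex-combination structure; your observation that $\sum_s A_s\epsilon_s=\mathcal{O}(1)$ is sharper than the paper's $\mathcal{O}(\log t)$. For Phase II the paper only asserts the $\mathcal{O}(\sqrt{J}\log(JGt))$ count, and your derivation from \Cref{thm-phase2} and \Cref{thm-inner-problem-core} with $\beta=1/2$ supplies that omitted step and resolves the $\beta$ versus $\beta^2$ mismatch; note it relies on $L\ge 1$, which appears only in the main-text version of the corollary, and implicitly on $G\gtrsim 1$ so that $\log(Jt^4)\lesssim\log(JGt)$.
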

\begin{proof}
First, we notice that 
\[  N_{\text{Phase II}} = \mathcal{O} \left( G + \sqrt{J} \log \Big( J G t \Big) \right) \]
In order to show the upper bound of the Phase I complexity, we recall that \Cref{eq-main-acceleration-results} implies for any $t \geq 1$
\begin{equation}
\begin{aligned}
A_t  \max_{\lambda \in \Delta }\Phi(\lambda ,w_{t+1}^{ag}) & \leq \frac{1}{2}\| w_{\star} - w_1 \|_2^2 -  \frac{1}{2} \| w_{\star} - w_{t+1} \|_2^2   + \sum_{\tau=1}^{t} \alpha_t  \Phi(\lambda_{t+1},w_\star)  + \sum_{\tau=1}^{t} A_\tau \epsilon_\tau \\
 & \leq \frac{1}{2}\| w_{\star} - w_1 \|_2^2 -  \frac{1}{2} \| w_{\star} - w_{t+1} \|_2^2   + A_t \max_{\lambda \in \Delta } \Phi(\lambda,w_\star)  + \sum_{\tau=1}^{t} A_\tau \epsilon_\tau 
\end{aligned}
\end{equation}
Therefore, since $w_\star$ is the minimum by definition, we deduce that
\begin{align*}
 \| w_{\star} - w_{t+1} \|_2^2 \leq \| w_{\star} - w_1 \|_2^2  + 2 \sum_{\tau=1}^{t} A_\tau \epsilon_\tau  \leq D^2 + \mathcal{O}(\log(t))
\end{align*}
We also observe that $w_t^{md}$ is a convex combination of $w_t$, therefore for all $t\geq 1$
\begin{align*}
\| w_\star - w_t^{md}\|_2 \leq \max_{1 \leq \tau \leq t } \| w_\star - w_{\tau} \|_2
\end{align*}
\Cref{thm-phase1} implies that
\begin{align*}
N_{\text{Phase I}} = \mathcal{O} \left(  \frac{G^2 }{L} +  2 G \sqrt{D^2 + \log(t)} + S_\star \right)
\end{align*}
which concludes our proof.
\end{proof}

\section{Spectrahedron}

The \textit{spectrahedron} is defined as the set of $J \times J$ symmetric PSD matrices with unit trace 
\[ \mathcal{S}_J := \{ \lambda \in \mathbb{S}_+^J \mid \text{Tr}(\lambda) = 1 \}.\] 
The affine coupling for matrices becomes :
\begin{align*}
\min_{w \in \mathbb{R}^d } \max_{\lambda \in S_J}    \text{Tr}(\lambda F(w)) - R(\lambda)
\end{align*}
where $F :\mathbb{R}^d \xrightarrow{} S_J $ is a matrix-convex (Loewner-convex) and $R :\mathbb{R}^J \xrightarrow{} \mathbb{R}$ convex.

For the spectrahedron we use the log-determinant barrier:
\begin{equation}
\begin{aligned}
B_{\Lambda}(\lambda) &:= -\log \det(\lambda) \\
F_{\tau}(\lambda) & := \tau (S_t(\lambda) + R(\lambda))  -\log \det(\lambda)  
\end{aligned}
\end{equation}
This function is strictly $J$-self-concordant for the PSD cone $\mathbb{S}^J_+$ \citep{nesterov2018lectures}. 

\section{Simulations}
\label{sec-appendix-simu}
Note that data generation and the pre-computation of the smoothness constant $L$ are excluded from the runtime measurements. Because the exact optimal primal value is intractable, we approximate it empirically using the lowest objective value achieved across all runs of our proposed method. 
\textbf{Data generation}
\begin{itemize}
    \item A true underlying weight vector: $w_{true} \sim \mathcal{N}\left(0, \frac{20}{d}I\right)$
    \item Group-specific distribution shift: $\mu_j \sim \mathcal{N}\left(0, \frac{1}{d}I\right)$ 
    \item Sample generation: $x_{j,i} \sim \mathcal{N}\left(0, \frac{1}{d}I\right) + 0.1 \mu_j$
    \item Label Generation: $y_{j,i} = \text{sign}(\langle x_{j,i}, w_{true} \rangle + \varepsilon_{j,i})$ where $\varepsilon_{j,i} \sim \mathcal{N}(0, 1)$ 
\end{itemize}

\textbf{Gradient computation}
\begin{align*}
\nabla f_j(w) & = \frac{1}{N} \sum_{i=1}^{N} \frac{-y_{j,i} X_{j,i}}{1 + \exp(y_{j,i} X_{j,i}^\top w)} \\
L_j & :=  \lambda_{\max}\left( \frac{1}{4N} X_j^\top X_j \right) \\ 
L & := \max_{j} L_j \\
\end{align*}

\textbf{Algorithm configuration}
Our algorithm is detailed in \Cref{alg:accelerated_method_quadratic}. In this experiment, we compare our approach against the accelerated proximal method, the optimistic method, and the extragradient method, defined as follows:

\textbf{Accelerated Proximal Gradient Descent}
When $\gamma > 0$, the exact maximization over $\lambda$ yields a strongly-smoothed primal function $g(w) = \max_{\lambda \in \Delta_J} \Phi(w, \lambda)$. Compute
\begin{align*}
\lambda_t^\star & = \Pi_{\Delta_J} \left( \textbf{1}_J + \frac{f(y_{t-1})}{\gamma} \right) \\
L_{\text{local}} & = L + \frac{\Vert{} \nabla f(y_{t-1})\Vert{}_2^2}{\gamma} \\
w_t & = y_{t-1} - \frac{1}{L_{\text{local}}} \nabla f(y_{t-1}) \lambda_t^{\star} 
\end{align*}
Apply Nesterov Momentum Update:
\begin{align*}
\tau_t & = \frac{1 + \sqrt{1 + 4\tau_{t-1}^2}}{2} \\
y_t & = w_t + \frac{\tau_{t-1} - 1}{\tau_t} (w_t - w_{t-1})
\end{align*}

\textbf{Extragradient}
Compute the step-size and midpoint
\begin{align*}
\eta_t & = \frac{1}{\max(L, \gamma) + \Vert{}\nabla f(w_{t-1})\Vert{}_2} \\
w_{\text{mid}} & = w_{t-1} - \eta_t \nabla f(w_{t-1})\lambda_{t-1} \\
\lambda_{\text{mid}} & = \Pi_{\Delta_J} \left[ \lambda_{t-1} + \eta_t \nabla_\lambda \Phi(w_{t-1}, \lambda_{t-1}) \right]
\end{align*}

Compute the final update
\begin{align*}
w_t & = w_{t-1} - \eta_t \nabla f(w_{\text{mid}}), \lambda_{\text{mid}} \\
\lambda_t & = \Pi_{\Delta_J} \left[ \lambda_{t-1} + \eta_t \nabla_\lambda \Phi(w_{\text{mid}}, \lambda_{\text{mid}}) \right]
\end{align*}

\end{document}

%% file: math_commands.tex
\usepackage{amsmath,amsfonts,bm}

\def\1{\bm{1}}

\DeclareMathAlphabet{\mathsfit}{\encodingdefault}{\sfdefault}{m}{sl}
\SetMathAlphabet{\mathsfit}{bold}{\encodingdefault}{\sfdefault}{bx}{n}

